\numberwithin{equation}{section}
\numberwithin{equation}{section}
\newtheorem{theorem}{Theorem}[section]
\newtheorem{lemma}[theorem]{Lemma}
\newtheorem{remark}[theorem]{Remark}
\newtheorem{definition}[theorem]{Definition}
\newcommand{\al}{\alpha}
\newcommand{\ga}{\gamma}
\newcommand{\Ga}{\Gamma}
\newcommand{\de}{\delta}
\newcommand{\De}{\Delta}
\newcommand{\e}{\varepsilon}
\newcommand{\Th}{\Theta}
\newcommand{\si}{\sigma}
\newcommand{\Si}{\Sigma}
\newcommand{\vp}{\varphi}
\newcommand{\om}{\omega}
\newcommand{\Om}{\Omega}
\newcommand{\cj}{\mathcal J}
\newcommand{\cn}{\mathcal N}
\newcommand{\ck}{\mathcal K}
\newcommand{\wt}{\widetilde}
\newcommand{\wh}{\widehat}
\newcommand{\ZR}{\mathbb{R}}
\newcommand{\ZZ}{\mathbb{Z}}
\newcommand{\ZS}{\mathbb{S}}
\newcommand{\Id}{{\bf{1}}}
\newcommand{\bA}{{\bf A}}
\newcommand{\Tau}{\mathcal{T}}
\newcommand{\R}{\mathbb{R}}
\newcommand{\vh}{\boldsymbol h}
\newcommand{\dist}{{\rm dist}}
\newcommand{\supp}{{\rm{supp}}}
\newcommand{\bth}{\boldsymbol \Theta}
\newcommand{\vth}{\vartheta}
\begin{document}

\title{Square function estimates for conical regions}

\date{}

\begin{abstract}
    We prove square function estimates for certain conical regions. Specifically, let $\{\De_j\}$ be regions of the unit sphere $\ZS^{n-1}$ and let $S_jf$ be the smooth Fourier restriction of $f$ to the conical region $\{\xi\in\R^n:\xi/|\xi|\in\De_j\}$. We are interested in the following estimate 
    $$\Big\|(\sum_j|S_jf|^2)^{1/2}\Big\|_p\lesssim_\e \de^{-\e}\|f\|_p.$$ 
    The first result is: when $\{\De_j\}$ is a set of disjoint $\de$-balls, then the estimate holds for $p=4$. The second result is: In $\R^3$, when $\{\De_j\}$ is a set of disjoint $\de\times\de^{1/2}$-rectangles contained in the band $\ZS^2\cap N_\de(\{\xi_1^2+\xi_2^2=\xi_3^2\})$ and $\supp\wh f\subset \{\xi\in\R^3:\xi/|\xi|\in\ZS^2\cap N_\de(\{\xi_1^2+\xi_2^2=\xi_3^2\})\}$, then the estimate holds for $p=8$. The two estimates are sharp.

\end{abstract}

\author{Shengwen Gan} \address{Shengwen Gan\\  Deparment of Mathematics, Massachusetts Institute of Technology, USA}\email{shengwen@mit.edu}

\author{Shukun Wu} \address{ Shukun Wu\\  Department of Mathematics\\ California Institute of Technology, USA}\email{skwu@caltech.edu}

\maketitle


\section{Introduction}

The whole Littlewood-Paley theory concerns orthogonality properties of the Fourier transform, and square function gives a way to express
and quantify orthogonality of the Fourier transform on $L^p$ space. In particular, one seeks estimates of the form
\begin{equation}
\label{general-LP}
    \Big\|\Big(\sum_{\si\in\Si}|S_\si f|^2\Big)^{1/2}\Big\|_p\leq C_{n,p,\Si}\|f\|_p,
\end{equation}
where $\Si=\{\si\}$ is a collection of geometric objects in $\ZR^n$ and $S_\si f$ is the Fourier restriction of $f$ to $\si$. 
Two different types of the operator are frequently studied: smooth operator and sharp operator. If one studies the smooth operator, then one uses the definition $S_\si f=(\Id^*_\si\wh f\ )^\vee$, where $\Id^*_\si$ is a smooth bump function at $\si$. Similarly, one uses the definition $S_\si f=(\Id_\si \wh f\ )^\vee$ where $\Id_\si$ is the indicator function of $\si$ to study the sharp operator. Usually, the smooth version is easier than the sharp version.

Let us discuss some well-known square function estimates of the form \eqref{general-LP}.
The classic Littlewood-Paley theory justifies \eqref{general-LP} for $1<p<\infty$ when $\Si$ is the collection of dyadic annuli and $\{S_\si\}_{\si\in\Si}$ are sharp (or smooth) Fourier projection operators associated to the annuli; the Rubio de Francia’s square function estimate justifies \eqref{general-LP} for $2\leq p<\infty$ when $\Si$ is a collection of disjoint rectangles whose edges are parallel to the coordinate axes and $\{S_\si\}_{\si\in\Si}$ are sharp (or smooth) projections (see \cite{Rubio}, \cite{Jean-Lin} and \cite{Lacey-LP}). If one uses the sharp operator and seeks for an square function estimate, then the shape of each $\si\in\Si$ is quite limited. Indeed, due to Fefferman's ball-multiplier example \cite{fefferman1971multiplier}, we see that \eqref{general-LP} makes sense only when $\si$ is ``flat" in some sense.
In the case of sharp operator, we replace each $\si\in\Si$ by a rectangular box of the same size to make sure the boundaries of $\si$ are flat.
For this specific choice of $\Si$, the estimate \eqref{general-LP} is related to the maximal Nikodym or Kakeya conjecture (see for instance \cite{Cordoba-BR}, \cite{Bourgain-Besicovitch}), which is one of the core conjectures in harmonic analysis. In this paper, we study \eqref{general-LP} when $\Si$ is a certain collection of conical regions with the common apex at the origin.

\subsection{First result}
A well-known result for square function in $\R^2$ proved by C\'ordoba \cite{Cordoba} says that for some absolute constant $C$ the following estimate holds: 
\begin{equation}\label{cordobaest}
     \Big\|\big(\sum_{j=1}^N|S_j f|^2\big)^{1/2}\Big\|_4\lesssim (\log N)^C\|f\|_4.
\end{equation}
Here $S_jf$ is the Fourier restriction to the conical regions determined by
\begin{equation}
\label{planar-triangulation}
    \De_j=\{\om\in\ZS^1:2\pi j/N\leq \arg(\om)\leq 2\pi(j+1)/N\},
\end{equation}
namely, $\wh{S_j f}(\xi)=\Id_{\De_j}(\xi/|\xi|)\wh{f}(\xi)$.

Our first result is a generalization of Cordoba's estimate \eqref{cordobaest} to higher dimensions.
In Cordoba's square function estimate, the regions are chosen as $\Si=\{\si_j\}$ where $\si_j=\{\xi: \xi/ |\xi|\in\De_j\}$ is a sector of angle $N^{-1}$. 
A natural way to generalize it to higher dimensions is by choosing $\{\De_j\}$ to be a set of disjoint $\de$-balls in $\ZS^{n-1}$.
We first discuss the smooth version. Let $\{\De_j\}$ be a set of disjoint $\de$-balls
and let $\{\Id^*_{\De_j}\}$ be the corresponding smooth bump functions. More precisely,
$\Id^*_{\De_j}$ is supported in $\De_j$ and $=1$ on $\frac{1}{2}\De_j$ ($\frac{1}{2}\De_j$ is the ball that has the same center as $\De_j$ but half the radius), and it satisfies the decay condition $|\nabla^k \Id^*_{\De_j}(\om)|\lesssim_k \de^{-k}$.

\begin{theorem}
\label{main-thm}
Let $\{\De_j\}$ be a set of disjoint $\de$-balls in $\ZS^{n-1}$ and $\{\Id^*_{\De_j}\}$ be the corresponding smooth bump functions.
Let $T_jf=(\Id_{\De_j}^\ast(\cdot/|\cdot|)\wh{f}(\cdot))^\vee$ be the smooth Fourier projection of $f$ to the conical region $\{\xi\in\R^n:\xi/|\xi|\in\De_j\}$. Then for some constant $C$ depending only on the dimension, we have
\begin{equation}
\label{main-esti}
     \Big\|\big(\sum_{j}|T_j f|^2\big)^{1/2}\Big\|_4\lesssim (\log \de^{-1})^{1/2}\|f\|_4.
\end{equation}
\end{theorem}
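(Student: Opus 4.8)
The plan is to reduce the $L^4$ square-function estimate to an $L^2$ estimate by expanding the fourth power, and then to control the resulting bilinear interaction using the geometry of the conical pieces. Writing $\|(\sum_j |T_jf|^2)^{1/2}\|_4^4 = \|\sum_j |T_jf|^2\|_2^2 = \sum_{j,k}\int |T_jf|^2|T_kf|^2$, the problem becomes a bound of the form $\sum_{j,k}\|T_jf\cdot T_kf\|_2^2\lesssim \log(\de^{-1})\,\|f\|_4^4$. The key point is that $\widehat{T_jf\cdot T_kf}$ is supported in the sumset $\Gamma_j+\Gamma_k$, where $\Gamma_j=\{\xi:\xi/|\xi|\in\De_j\}$ is the cone over the $\de$-ball $\De_j$. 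I would first record the geometric fact that, for a fixed pair $(j,k)$, the regions $\Gamma_j+\Gamma_k$ and $\Gamma_{j'}+\Gamma_{k'}$ have bounded overlap unless $(j',k')$ is close to $(j,k)$ — more precisely, one wants to show that for each dyadic annulus in frequency the sumsets behave like a finitely-overlapping family after a logarithmic loss. This is where the $(\log\de^{-1})^{1/2}$ factor should enter: the cones are non-compact, so one dyadically decomposes in $|\xi_j|/|\xi_k|$ (the ratio of the ``heights'' of the two frequency pieces), giving $O(\log\de^{-1})$ scales, and on each scale the sumsets are essentially disjoint.

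Concretely, I would proceed as follows. First, decompose $T_jf\cdot T_kf = \sum_{m}\Delta_m(T_jf\cdot T_kf)$ where $m$ ranges over $O(\log\de^{-1})$ values and $\Delta_m$ localizes the product to the regime where $|\xi|\in[2^m,2^{m+1}]$ after suitable normalization, or alternatively localizes the ratio of frequency magnitudes of the two factors. By Cauchy–Schwarz in $m$ this costs $(\log\de^{-1})^{1/2}$ in the final $L^4$ bound, matching the claimed constant. Second, on each fixed scale $m$, I claim the frequency supports $\{\Gamma_j+\Gamma_k : \text{relevant }(j,k)\}$ have $O(1)$ overlap: since the $\De_j$ are disjoint $\de$-balls and we have fixed the relative scale, $\Gamma_j+\Gamma_k$ at height $\sim 2^m$ is morally a $\de 2^m$-ball translated along the line through the relevant point, and two such sumsets can coincide only for boundedly many $(j',k')$. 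Third, with this finite-overlap property, an $L^2$ orthogonality argument (Plancherel) gives, for fixed $m$,
$$\Big\|\sum_j |T_jf|^2\Big\|_2 \lesssim \Big(\sum_{j,k}\|T_jf\cdot T_kf\|_2^2\Big)^{1/2},$$
and then one bounds $\|T_jf\cdot T_kf\|_2\le \|T_jf\|_4\|T_kf\|_4$ by Cauchy–Schwarz, followed by $\sum_j\|T_jf\|_4^2$… but this last step is circular. Instead, the correct route after establishing finite overlap on scale $m$ is to invoke the reverse square function / $L^2$ argument directly: $\sum_{j,k}\int|T_jf|^2|T_kf|^2 = \int|\sum_j|T_jf|^2|^2$, and the finite-overlap of the sumsets $\Gamma_j+\Gamma_k$ on scale $m$ lets one run the standard bi-orthogonality estimate (as in Córdoba's original argument in $\R^2$) to get this controlled by $\|f\|_4^4$ up to the scale count.

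The main obstacle I anticipate is the geometric overlap estimate for the sumsets of cones. Unlike the planar case, in dimension $n\ge 3$ the $\de$-balls $\De_j$ are genuinely two-or-more-dimensional subsets of the sphere, and the sumset $\Gamma_j+\Gamma_k$ of two cones over such balls is a more complicated region than in Córdoba's setting; one must verify carefully that, after the dyadic decomposition in relative height, these sumsets do not pile up. I would handle this by parametrizing $\Gamma_j = \{r\omega : r>0,\ \omega\in\De_j\}$ and noting that a point $\xi = r\omega + s\omega'$ with $\omega\in\De_j$, $\omega'\in\De_k$ and $r\sim s 2^m$ determines $\xi/|\xi|$ up to $O(\de)$ and $|\xi|$ up to a constant factor, hence determines the pair $(\De_j,\De_k)$ up to bounded multiplicity — using that the $\De_j$ are disjoint, so their ``$\de$-neighborhoods overlap $O(1)$ times.'' The remaining steps (the dyadic pigeonholing in relative scale, Cauchy–Schwarz, and the bi-orthogonality in $L^2$) are then routine once this packing lemma is in place. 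A secondary technical point is dealing with the non-compactness and the smooth cutoffs $\Id^*_{\De_j}$: their derivative bounds $|\nabla^k\Id^*_{\De_j}|\lesssim\de^{-k}$ ensure the associated kernels are well-localized at scale $\de^{-1}$ transverse to the cone, which is needed to make the finite-overlap heuristic rigorous via a Schur-test or almost-orthogonality estimate rather than exact disjointness of supports.
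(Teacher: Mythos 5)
The step that fails is your key geometric claim: that after fixing the dyadic ratio of the two heights, the sumsets $\Gamma_j+\Gamma_k$ of the cones are finitely overlapping, i.e.\ that $\xi=r\omega+s\omega'$ with $\omega\in\De_j$, $\omega'\in\De_k$, $r\sim s2^m$ determines the pair $(\De_j,\De_k)$ up to bounded multiplicity. This is false for $n\ge 3$. Fix $\xi$ and take $r=s\sim|\xi|$: the set of pairs $(\omega,\omega')\in\ZS^{n-1}\times\ZS^{n-1}$ with $r\omega+s\omega'=\xi$ contains the full orbit of any one solution under rotations fixing the axis $\R\xi$, an $(n-2)$-dimensional family, and varying the ratio $r/s$ and the opening angle gives an even larger family. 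Hence at resolution $\de$ a single frequency point lies in at least $\de^{-(n-2)}$ of the sumsets even at a fixed relative height (a measure count of $\sum_{j,k}|\tau_j+\tau_k|$ shows the average overlap is an even larger power of $\de^{-1}$); disjointness of the $\De_j$ only rules out coincident caps, not this continuum of distinct pairs. Since the multiplicity is polynomial in $\de^{-1}$, pigeonholing in one scalar parameter with $O(\log\de^{-1})$ cases cannot convert it into a logarithmic loss, so the biorthogonality scheme cannot give \eqref{main-esti}. Two further problems: even in the plane the sumsets of sectors are not boundedly overlapping (C\'ordoba's sector theorem is proved via weighted-$L^2$/maximal-function arguments, not plain biorthogonality); and bounded overlap of the supports of $(T_jf\,\overline{T_kf})^{\wedge}$ would in any case give $\|\sum_{j,k}T_jf\overline{T_kf}\|_2^2\lesssim\sum_{j,k}\|T_jf\overline{T_kf}\|_2^2$, which is the comparison needed for the \emph{reverse} square function inequality, not the forward one you are proving — this is exactly the circularity you noticed, and the overlap claim (even if true) would not repair it.

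For comparison, the paper never argues through sumset overlaps. The radial non-compactness is handled first (Section \ref{sec2}): after splitting $\ZZ$ into $10^n$ arithmetic progressions of dyadic scales, an expansion of the $L^4$ norm gives $I\le 1000\,II$ because for distinct, widely separated scales the sets $(\tau_{j_1,k_1}-\tau_{j_1,k_3})\cap(\tau_{j_2,k_4}-\tau_{j_2,k_2})$ are genuinely disjoint; the sum over scales is then performed by the vector-valued Littlewood--Paley lemma (Lemma \ref{reductionlem}), which is where the $(\log\de^{-1})^{1/2}$ comes from. (Note also that the height ratio of two interacting dyadic annuli ranges over all dyadic numbers, so your count of ``$O(\log\de^{-1})$ relative scales'' is itself unjustified.) The local estimate \eqref{reduction-2} is then proved by the high--low method: with $g=\sum_\tau|f_\tau|^2$, one decomposes $\supp\wh g\subset B_{10}(0)$ into dyadic annuli of radius $s$ covered by $\de\times\cdots\times\de\times s$ tubes $\theta_s$, pigeonholes in $s$ (the only logarithmic loss), and on each translate $U$ of the dual plate $\theta_s^*$ applies local $L^2$ orthogonality (Lemma \ref{weightedL2}) followed by $\sum_{\vartheta}\|f_\vartheta\|_4^4\lesssim\|f\|_4^4$. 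Any salvage of your expansion-based route would have to confront precisely the unbounded overlap described above, which is what this frequency-space decomposition of the square function is designed to bypass.
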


\begin{remark}
{\rm
The exponent $p=4$ is sharp in the sense that for $p>4$, the factor $(\log \de^{-1})^{1/2}$ in \eqref{main-esti}
should be replaced by some factor like $\de^{-c_p}$, which depends exponentially on $\de$. Also, a logarithmic loss like $(\log \de^{-1})^{1/4}$ is inevitable due to the existence of Besicovitch set (see for instance \cite{accomazzo2020directional} Section 8.6).}
\end{remark}

\medskip

Next, let us discuss the sharp-projection version of Theorem \ref{main-thm}.
We still hope each $\De_j$ is roughly a $\de$-ball in $\ZS^{n-1}$. While due to Fefferman's ball-multiplier example, the boundaries of $\si_j=\{\xi\in\R^n:\xi/|\xi|\in\De_j\}$ must be flat, otherwise estimates like \eqref{main-esti} for sharp projection could fail. These two conditions suggest that each $\De_j$ is a ``$\de$-regular polyhedron" in $\ZS^{n-1}$, for which we give the precise definition below.

\begin{definition}
Let $\De$ be a subset of $\ZS^{n-1}$ and $0<\de\ll1$. We say that $\De$ is a $\de$-regular polyhedron of $\ZS^{n-1}$ if $\De$ is surrounded by great $(n-2)$-spheres of $\ZS^{n-1}$ and there exists $\om_\De\in \ZS^{n-1}$ such that $\ZS^{n-1}\cap B_{c\de}(\om_\De)\subset \De\subset \ZS^{n-1}\cap B_{C\de}(\om_\De)$, where $c\ll C$ are two absolute constants. We call the portion of the great $(n-2)$-sphere that form the boundary of $\De$ the face of $\De$. Throughout the paper, we assume the polyhedron $\De$ has $O(1)$ many faces.
\end{definition}

We are interested in the set of disjoint $\de$-regular polyhedrons. Let us discuss some examples here.
The collection $\{\De_j\}$ given by \eqref{planar-triangulation} is a set of disjoint $N^{-1}$-regular polyhedrons in $\ZS^1$. In higher dimensions, we can also easily choose a triangulation of the sphere: $\ZS^{n-1}=\sqcup_j \De_j $ such that each $\De_j$ is a $\de$-regular polyhedron ($(n-1)$-simplex) of $\ZS^{n-1}$.

However, it turns out that we still need another condition for the collection $\{\De_j\}_j$. In fact, how the normal directions of the faces of each $\De_j$ are distributed is critical. Recall that each face of $\De_j$ is a portion of a great $(n-2)$-sphere, and the normal direction of the face is the normal direction of the corresponding great $(n-2)$-sphere in $\ZS^{n-1}$. Denote by $\mathcal N$ the collection of all the unit normal directions of all the faces of all the $\De_j$. One can think of $\mathcal N$ as a subset of $\ZS^{n-1}$. 

\begin{definition}\label{1dim}
We call a collection of $\de$-regular polyhedrons $\{\De_j\}$ ``one-dimensional", if $\mathcal N$ is contained in $O(1)$ many great circles  in $\ZS^{n-1}$.
\end{definition} 
For instance, the $\{\De_j\}$ given in \eqref{planar-triangulation} is one-dimensional.
Another example is the ``pyramid" example. Consider the set $[-1,1]^{n-1}\times \{-1\}$ in $\R^n$. We partition it into the sets of form $D_{\vec b}=\prod_{j=1}^{n-1}[\frac{b_j}{N},\frac{b_j+1}{N}]\times \{-1\} $, where $\vec b=(b_1\cdots,b_{n-1})$ and $-N\le b_j\le N-1$ are integers.
For each $\vec b= (b_1,\cdots b_{n-1})$, we define the small pyramid $\si_{\vec b}$ which consists of the points $\xi$ such that the ray $\overrightarrow{0 \xi}$ emanating from the origin intersects $D_{\vec b}$. More precisely,

$$ \si_{\vec b}:=\{\xi\in\R^n: \xi_n<0, \frac{1}{|\xi_n|} \xi\in D_{\vec b} \}. $$
We set $\De_{\vec b}=\ZS^{n-1}\cap \si_{\vec b}$. Then, $\{\De_{\vec b}\}_{\vec b}$ is one dimensional.

We can state our result for the sharp operator. 
\begin{theorem}
\label{main-thm-2}
Suppose $\{\De_j\}_j$ is a set of disjoint $\de$-regular polyhedrons and is one-dimensional. Define $S_jf=(\Id_{\De_j}(\cdot/|\cdot|)\wh{f}(\cdot))^\vee$ which is the Fourier restriction of $f$ to $\{\xi\in\R^n:\xi/|\xi|\in\De_j\}$.  Then for any $\e>0$ we have
\begin{equation}
\label{main-esti-2}
     \Big\|\big(\sum_{j}|S_j f|^2\big)^{1/2}\Big\|_4\lesssim_\e \de^{-\e} \|f\|_4.
\end{equation} 
\end{theorem}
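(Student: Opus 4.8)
The plan is to deduce Theorem~\ref{main-thm-2} from the smooth estimate \eqref{main-esti} of Theorem~\ref{main-thm} together with the planar square function theory of C\'ordoba (the two--dimensional estimate \eqref{cordobaest}, or equivalently planar Kakeya), using the one--dimensional hypothesis to linearize the sharp cuts.

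\emph{Factorization.} Since $\si_j=\{\xi:\xi/|\xi|\in\De_j\}$ is a cone invariant under positive dilations and $\De_j$ is cut out of $\ZS^{n-1}$ by the finitely many great $(n-2)$--spheres bounding it, one has the exact identity $\1_{\si_j}=\prod_i\1_{H_{j,i}}$, where $H_{j,i}=\{\xi:\xi\cdot e_{j,i}\ge 0\}$ and $e_{j,1},\dots,e_{j,M_j}$ (with $M_j=O(1)$) are the face normals of $\De_j$. Enlarge each $\De_j$ to a $\de$--regular polyhedron $\wt\De_j$ with the same faces (pushed outward by $O(\de)$), so that $\{\wt\De_j\}$ is still one--dimensional and is now boundedly overlapping, and let $T_j$ be the smooth projection to $\wt\De_j$ in the sense of Theorem~\ref{main-thm}. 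Since the multiplier of $T_j$ equals $1$ on $\si_j$, we obtain the exact factorization $S_jf=T_j\big(\prod_i R_{j,i}f\big)$, where $R_{j,i}$ has Fourier multiplier $\1_{H_{j,i}}$. The one--dimensional hypothesis puts every $e_{j,i}$ in one of $O(1)$ great circles $C_1,\dots,C_K$; writing $\prod_i R_{j,i}=R_{j,1}\cdots R_{j,K}$ where $R_{j,k}$ collects the factors whose normal lies in $C_k$, the multiplier of $R_{j,k}$ depends only on the orthogonal projection $\pi_k\xi$ onto the plane $P_k=\mathrm{span}\,C_k$, so $R_{j,k}$ is the lift to $\R^n$ of a planar (wedge) Fourier projection in $P_k$; moreover all operators in sight commute.

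\emph{Peeling off the circles.} By commutativity $S_jf=R_{j,1}\phi_j^{(1)}$ with $\phi_j^{(1)}=T_jR_{j,2}\cdots R_{j,K}f$, and $\widehat{\phi_j^{(1)}}$ is supported in $\wt\si_j$, a cone with a $\de$--ball cross--section. We peel off $R_{j,1}$ by a vector--valued planar square function estimate in the variable $\pi_1\xi$: because $\widehat{\phi_j^{(1)}}$ is concentrated in the cone $\wt\si_j$, the operator $R_{j,1}$ only sees the bounded--angle planar cone $\pi_1(\wt\si_j)$, on which it acts as the sharp projection onto one of $O(1)$ subsectors; after pigeonholing the $j$'s by the dyadic angular width of $\pi_1(\wt\si_j)$ one reduces to a square function estimate for planar sector projections, which is C\'ordoba's estimate \eqref{cordobaest} (in vector--valued form, for boundedly overlapping sectors). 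This gives $\big\|(\sum_j|R_{j,1}\phi_j^{(1)}|^2)^{1/2}\big\|_4\lesssim(\log\de^{-1})^{O(1)}\big\|(\sum_j|\phi_j^{(1)}|^2)^{1/2}\big\|_4$. Repeating for $k=2,\dots,K$ (each $\phi_j^{(k)}$ still has Fourier support in a $\de$--ball cone containing $\si_j$) strips off all the sharp half--space factors at total cost $(\log\de^{-1})^{O(K)}$ and leaves $\big\|(\sum_j|T_jf|^2)^{1/2}\big\|_4$, which is $\lesssim(\log\de^{-1})^{1/2}\|f\|_4$ by Theorem~\ref{main-thm} after splitting $\{\wt\De_j\}$ into $O(1)$ disjoint subfamilies. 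As $K=O(1)$, the cumulative loss is $(\log\de^{-1})^{O(1)}\lesssim_\e\de^{-\e}$, which is \eqref{main-esti-2}.

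\emph{Main difficulty.} The delicate point is the passage to the planar estimate, because for different $j$ the planar shadows $\pi_k(\si_j)$ overlap very heavily --- a fixed planar direction can lie in $\de^{-(n-2)}$ of them --- so one cannot simply treat the $R_{j,k}$ as a family of boundedly overlapping planar multipliers; doing so would lose a power of $\de$. This is where the $\R^n$--disjointness of the cones $\{\wt\si_j\}$ must be exploited: the many $\phi_j^{(k)}$ sharing a common planar shadow have frequency supports that are disjoint in the transverse directions $P_k^{\perp}$, so one combines the planar (C\'ordoba/Kakeya) estimate in $P_k$ with transverse orthogonality (Plancherel, or a Rubio de Francia square function estimate) in $P_k^{\perp}$. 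Making this uniform, and controlling the interaction of the $K$ successive peelings so that no step reintroduces a redundant summation over $j$, is the technical core; the geometric fact that makes it work is precisely that a $\de$--regular polyhedron is trapped in a $\de$--ball, so each of its faces, projected to a great--circle plane, runs along the thin shadow of the polyhedron.
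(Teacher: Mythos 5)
Your setup --- writing $\1_{\si_j}$ as a product of half--space multipliers whose normals lie in $O(1)$ great circles, inserting a smooth projection $T_j$ with multiplier equal to $1$ on $\si_j$, and aiming to reduce everything to Theorem \ref{main-thm} at a $(\log\de^{-1})^{O(1)}$ or $\de^{-\e}$ cost --- matches the paper's starting point. But the central step, ``peeling off the circles,'' has a genuine gap, and you essentially say so yourself. What you need there is a \emph{vector-valued} estimate of the form $\|(\sum_j|R_{j,k}\phi_j|^2)^{1/2}\|_4\lesssim\de^{-\e}\|(\sum_j|\phi_j|^2)^{1/2}\|_4$, where each sharp planar wedge multiplier acts on a \emph{different} function $\phi_j$ and where, as you note, up to $\de^{-(n-2)}$ of the planar shadows $\pi_k(\wt\si_j)$ contain a common direction. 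C\'ordoba's estimate \eqref{cordobaest} is a scalar statement about a single $f$ cut into disjoint sectors; it does not give this vector-valued inequality, and no amount of pigeonholing in the dyadic angular width restores bounded overlap of the shadows. The proposed repair --- ``combine the planar estimate with transverse orthogonality (Plancherel or Rubio de Francia) in $P_k^\perp$'' --- is exactly the point where the argument must be carried out, and at $L^4$ there is no straightforward way to decouple the transverse variable from the planar square function; you label this the ``technical core'' and leave it unproven. As written, the proposal is a plan with its key lemma missing, not a proof. There is also the bookkeeping issue you flag but do not resolve: after each peeling the functions $\phi_j^{(k)}$ must retain Fourier support in the $\de$-ball cones for the next peeling, which requires an argument (the sharp half-space cuts do not preserve the smooth localization exactly).

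For comparison, the paper resolves precisely this difficulty by never summing over $j$ in the planar step. One dualizes $\|(\sum_j|S_jf|^2)^{1/2}\|_4^2=\sum_j\int|S_jT_jf|^2g$ with a single weight $g\in L^2$, applies the C\'ordoba--Fefferman weighted inequality to each $j$ separately --- for a product of half-plane multipliers with normals in the great circle one gets $\int|S_jh|^2g\lesssim\int|h|^2M_s^{(m)}|g|$, where $M_s^{(m)}$ is an iterated directional maximal operator acting only in the two-plane of the circle and is the \emph{same} dominating weight for every $j$ --- and then uses H\"older together with the two-dimensional directional maximal estimate $\|M_s^{(m)}|g|\|_2\lesssim_\e\de^{-\e}\|g\|_2$ (Katz-type, with $s$ close to $1$). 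This leaves $\|(\sum_j|T_jf|^2)^{1/2}\|_4^2$, which is Theorem \ref{main-thm}. If you want to salvage your approach, the weighted-norm/maximal-function mechanism is the standard (and here apparently unavoidable) way to make your ``vector-valued C\'ordoba'' step rigorous; supplying it would in effect reproduce the paper's proof.
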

\noindent The one-dimensional assumption on $\{\De_j\}_j$ is somewhat necessary. In fact, we will show in the Appendix that without the ``one-dimensional" condition,
then \eqref{main-esti-2} can fail.

\begin{remark}
\rm
One possible application of Theorem \ref{main-thm} as well as Theorem \ref{main-thm-2} is the study of the local smoothing conjecture. Actually, the reverse square function estimate for cone plus the Nikodym maximal estimate for cone together with Theorem \ref{main-thm} would imply the local smoothing conjecture for cone. 
See for instance \cite{Tao-Vargas-LS}.

\end{remark}

\begin{remark}
\rm
After this project was finished, the authors became aware that Francesco Di Plinio and Ioannis Parissis have studied the same problem as in Theorem \ref{main-thm}. Their result  (\cite{di2021maximal} Theorem J) will imply our inequality \eqref{main-esti} (with the same constant ($\log\de^{-1})^{1/2}$) by a standard interpolation argument (see for example in \cite{demeter2010singular} Lemma 3.1 or \cite{Katz-two-d-directional} Proposition 2.1). Their method is based on the time-frequency analysis, while our method is based on the high-low method developed recently (see \cite{guth2020sharp}, \cite{guth2019incidence}, \cite{guth2020improved}). 
We also remark that when $n=2$, both the smooth operator version and the sharp operator version were studied by Accomazzo, Di Plinio, Hagelstein, Parissis and Roncal \cite{accomazzo2020directional}.
\end{remark}

\subsection{Second result}
Let us talk about the second result.
We consider the cone $\Ga=\{ \xi\in\R^3:\xi_1^2+\xi_2^2=\xi_3^2, 1/2\le |\xi_3|\le 1 \}$, and let $N_\de (\Ga)$ denote its $\de$-neighborhood. There is a canonical covering of $N_\de(\Ga)$ using finitely overlapping planks $\tau$ of dimensions $\sim \de\times \de^{1/2}\times 1$. Denote this collection by $\Tau=\{\tau\}$. For each $\tau\in\Tau$, choose a smooth bump function $\Id^*_\tau$ adapted to $\tau$ so that $\supp \Id^*_\tau\subset 2\tau$ and $\Id^*_{\tau}=1$ on $\tau$. Define $f_\tau:=(\Id^*_\tau \wh f\ )^\vee$ as usual.
Guth, Wang and Zhang \cite{guth2020sharp}  proved the following sharp $L^4$ reverse square function estimate.

\begin{theorem}[Reverse square function estimate, \cite{guth2020sharp}]
Assuming $\supp\wh f\subset N_\de(\Ga)$, then for any $\e>0$ we have
\begin{equation}
    \|f\|_{L^4(\R^3)}\le C_\e \de^{-\e} \bigg\| (\sum_\tau|f_\tau|^2)^{1/2} \bigg\|_{L^4(\R^3)}.
\end{equation}
Or equivalently,
\begin{equation}\label{reverse}
     \|\sum_{\tau}f_\tau\|_{L^4(\R^3)}\le C_\e \de^{-\e} \bigg\| (\sum_\tau|f_\tau|^2)^{1/2} \bigg\|_{L^4(\R^3)}.
\end{equation}
\end{theorem}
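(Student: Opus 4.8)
The plan is to prove the estimate \eqref{reverse} by the high--low frequency method, run as an induction on the scale $R:=\de^{-1}$. Set $f:=\sum_\tau f_\tau$, so that $\supp\wh f\subset N_{2\de}(\Ga)$ and the goal is $\|f\|_{L^4(\R^3)}^4\lesssim_\e\de^{-\e}\int_{\R^3}(\sum_\tau|f_\tau|^2)^2$. After a parabolic rescaling of $\Ga$ and the standard localization via the locally-constant property, it suffices to bound $\int_{B_R}|f|^4$ on a single ball $B_R$ of radius $R$ and then sum over a partition of $\R^3$ into such balls. Now $\int_{B_R}|f|^4\le\||f|^2\|_{L^2(\R^3)}^2=\sum_{\tau_1,\tau_2,\tau_3,\tau_4}\int f_{\tau_1}f_{\tau_2}\overline{f_{\tau_3}f_{\tau_4}}$, and the term indexed by $(\tau_1,\tau_2,\tau_3,\tau_4)$ vanishes unless $(2\tau_1+2\tau_2)\cap(2\tau_3+2\tau_4)\neq\emptyset$. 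The ``diagonal'' quadruples, where $\{\tau_1,\tau_2\}$ and $\{\tau_3,\tau_4\}$ nearly coincide, sum to $\lesssim\int_{\R^3}(\sum_\tau|f_\tau|^2)^2$ immediately, exactly as in C\'ordoba's argument for the parabola. The essential point is that on the cone, unlike on the parabola, the constraint $(2\tau_1+2\tau_2)\cap(2\tau_3+2\tau_4)\neq\emptyset$ admits, in addition, a one-parameter family of ``coplanar'' solutions; these account both for the unavoidable $\de^{-\e}$ loss and for essentially all of the difficulty.

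To handle the coplanar interactions I would introduce $g:=\sum_\tau|f_\tau|^2$ and split it by frequency. Since $\supp\wh{f_\tau}\subset 2\tau$, the function $|f_\tau|^2$ has Fourier support in the plank $2\tau-2\tau$, of dimensions $\sim\de\times\de^{1/2}\times1$, centered at the origin and lying within $O(\de)$ of the $2$-plane $P_\tau$ tangent to $\Ga$ along the ruling of $\tau$. With $\eta$ a smooth bump equal to $1$ on $B(0,\rho)$ and supported in $B(0,2\rho)$, the threshold $\rho$ slightly below $\de^{1/2}$, write $g=g_{\mathrm{low}}+g_{\mathrm{high}}$ with $g_{\mathrm{low}}:=g*\check\eta$, and split $|f|^2$ at the same threshold, so that $\int_{B_R}|f|^4\lesssim\|(|f|^2)_{\mathrm{low}}\|_{L^2}^2+\|(|f|^2)_{\mathrm{high}}\|_{L^2}^2$. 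For the low part, the only cross terms $f_\tau\overline{f_{\tau'}}$ contributing frequencies $\lesssim\de^{1/2}$ are the nearly-diagonal ones, because $2\tau-2\tau'$ recedes from the origin as $\tau$ and $\tau'$ separate---and here one uses that $\Ga$ is ruled, so that $2\tau-2\tau'$ is a genuine plank; these nearly-diagonal pieces, once refined and rescaled, are lower-scale instances of the same estimate, so $\|(|f|^2)_{\mathrm{low}}\|_{L^2}\lesssim_\e\de^{-\e}\|g\|_{L^2}$ follows from the induction hypothesis together with Plancherel. Everything is then reduced to bounding $\|(|f|^2)_{\mathrm{high}}\|_{L^2}$.

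For the high part the crucial geometric fact is that excising $B(0,\rho)$ from the plank $2\tau-2\tau$ leaves its two ends, which lie near $\pm(\text{the ruling direction of }\tau)$; as $\tau$ varies and its ruling directions trace the directrix circle, these ends sweep out exactly $N_\de$ of $\Ga$ truncated to radii in $[\rho,1]$---the same cone, at a coarser angular resolution. I would then decompose $g_{\mathrm{high}}$ dyadically according to the distance $r\in[\rho,1]$ to the origin: the piece at radius $\sim1$ is handled by bare $L^2$ orthogonality, since its frequency patches (one $\de^{1/2}\times\de$ patch per $\tau$) are pairwise disjoint; for the piece at radius $\sim r\ll1$, the rescaling $\xi\mapsto\xi/r$ turns it into a function with Fourier support in $N_{(rR)^{-1}}(\Ga)$, i.e.\ an instance of the estimate at the strictly smaller scale $rR\in[R^{1/2},R)$. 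Applying the induction hypothesis there, tracking the correspondence between the original caps $\tau$ and the coarse caps of the rescaled cone so as to bound the coarse square function by $\|g\|_{L^2}$ (using the wave-packet normalization and a further dyadic pigeonholing in angular separation---this is where the curvature and torsion of the directrix enter), and summing the $O(\log\de^{-1})$ dyadic scales, yields $\|(|f|^2)_{\mathrm{high}}\|_{L^2}\lesssim_\e\de^{-\e}\|g\|_{L^2}$ and closes the induction.

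The main obstacle is precisely this last step: making rigorous that the medium-frequency content of $\sum_\tau|f_\tau|^2$, where the coplanar quadruples are concealed, is genuinely governed by a rescaled cone problem, and that passing through the induction costs only $\de^{\e}$ per scale, so that $O(\log\de^{-1})$ scales accumulate to no worse than $\de^{-\e}$; in particular the pigeonholing used to normalize the wave packets must be carried out uniformly across scales. It is here that the exponent $p=4$ is indispensable---it turns $\|f\|_4^2$ into the $L^2$ quantity $\||f|^2\|_2$ on which Plancherel and the high--low split operate---and here that the full ruled-and-curved geometry of $\Ga$ is needed: for the sphere, or for any $p>4$, the corresponding reverse square function estimate is false, so a successful argument must engage both features exactly at this juncture.
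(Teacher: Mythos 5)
The paper does not prove this theorem; it is quoted verbatim from Guth--Wang--Zhang \cite{guth2020sharp} and used as a black box in the proof of Theorem~\ref{L8thm}. So there is no ``paper's proof'' to compare against. Evaluating your sketch on its own terms: the broad outline---pass to $\||f|^2\|_{L^2}^2$, high--low decompose $g=\sum_\tau|f_\tau|^2$ at frequency scale $\sim\de^{1/2}$, observe that the high-frequency part of each $|f_\tau|^2$ lives on a rescaled cone, and induct on scales---correctly identifies the engine that drives the GWZ argument.

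However, there is a genuine gap in the step where you close the high-frequency contribution. You would need to bound $\|(|f|^2)_{\mathrm{high}}\|_{L^2}$ by $\|g\|_{L^2}$, but $(|f|^2)_{\mathrm{high}}$ contains all off-diagonal cross terms $(f_{\tau_1}\overline{f_{\tau_2}})_{\mathrm{high}}$ with $\tau_1\neq\tau_2$ (these are precisely the ``coplanar'' quadruples you flagged), whereas $g$ is purely diagonal. Saying the medium-radius piece is ``a rescaled-cone instance of the estimate at scale $rR$'' and then ``applying the induction hypothesis'' does not close, because the inductive hypothesis in your proposal is the reverse square function estimate---an $L^4\to L^4$ comparison---and what you need here is an $L^2$-level control of overlapping (not orthogonal) frequency planks on the rescaled cone in terms of the diagonal square function. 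For this GWZ do not induct on the reverse square function estimate itself: they induct on the strictly stronger \emph{wave envelope estimate}, and the mechanism that handles the high part is their Kakeya-type incidence Lemma~1.4 (the lemma that is generalized as Lemma~\ref{kakeyalem} in the present paper), which re-expresses $\int(\sum_\om|h_\om|^2)^2$ as a sum of localized $L^2$ quantities over wave envelopes $U$. Without that lemma and without formulating the induction at the level of wave envelopes, the off-diagonal cross terms in $(|f|^2)_{\mathrm{high}}$ are not controlled, and the induction does not close. Your sketch names the right phenomena but omits this load-bearing ingredient, which is the technical heart of the GWZ proof.
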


In this paper, we prove the sharp $L^8$ square function estimate for the cone.

\begin{theorem}\label{L8thm}
Assuming $\supp\wh f\subset N_\de(\Ga)$, then for any $\e>0$ we have
\begin{equation}\label{L8}
     \bigg\| (\sum_\tau|f_\tau|^2)^{1/2} \bigg\|_{L^8(\R^3)}\le C_\e \de^{-\e} \|f\|_{L^8(\R^3)}.
\end{equation}
\end{theorem}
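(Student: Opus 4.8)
Since $8=2\cdot4$, estimate \eqref{L8} is equivalent to the $L^4$ inequality
\[
 \Big\|\sum_{\tau}|f_\tau|^2\Big\|_{L^4(\R^3)}\lesssim_\e\de^{-\e}\,\|f\|_{L^8(\R^3)}^2 ,
\]
so I would work throughout with the single nonnegative function $g:=\sum_\tau|f_\tau|^2$ and try to prove $\|g\|_4\lesssim_\e\de^{-\e}\|f\|_8^2$. Two structural facts about $g$ organize the argument. First, $|f_\tau|^2$ has Fourier support in $\tau-\tau$, a slab through the origin of dimensions $\sim\de\times\de^{1/2}\times1$ whose two longer directions span the tangent plane of $\Ga$ along $\tau$; as $\tau$ ranges over $\Tau$ these tangent planes trace out (a neighbourhood of) the dual cone, so the slabs $\{\tau-\tau\}$ have bounded overlap away from the origin, and this overlap bound is the curvature input that ultimately forces the exponent to be $8$. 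Second, $g$ is band-limited to a ball of radius $O(1)$, hence locally constant at unit scale, and after the usual locally constant/wave-packet reductions one may assume $f$ is concentrated on a single ball of radius $\de^{-1}$ and that each $|f_\tau|^2$ is morally $\sum_{T}c_T\1_T$ over a tiling by dual planks $T$ of size $1\times\de^{-1/2}\times\de^{-1}$.

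The plan is to run the high-low method of \cite{guth2020sharp,guth2019incidence,guth2020improved} on $g$, organized as an induction on scales (parabolically rescaling sub-arcs of $\Ga$ to the full cone). Fix a small $\e$ and a geometric sequence of frequency radii $\rho$ between roughly $\de^{1/2}$ and $1$; at radius $\rho$ write $g=g^{\mathrm{low}}_\rho+g^{\mathrm{high}}_\rho$, where $g^{\mathrm{low}}_\rho$ retains the frequencies of size $\lesssim\rho$. Because the slabs $\tau-\tau$ carry two honest scales — $\de$ in the normal direction and $\de^{1/2}$ in the ``parabolic'' direction — this split must be done with some care: either as a two-step high-low, first resolving the $\de^{1/2}$-structure (governed by Córdoba's planar $L^4$ square function estimate \eqref{cordobaest}, i.e.\ the case $n=2$ of Theorem \ref{main-thm}) and then the $\de$-structure, or using a cutoff weight adapted to the planks $\tau$.

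To estimate the two pieces I would expand $\|g\|_4^4=\int g^4=\int g^3 g^{\mathrm{low}}_\rho+\int g^3 g^{\mathrm{high}}_\rho$. In the high term, the portions of the slabs $\tau-\tau$ at frequency $\gtrsim\rho$ lie away from the origin and are almost disjoint up to the controlled overlap above, so Plancherel gives $\|g^{\mathrm{high}}_\rho\|_2^2\lesssim(\mathrm{overlap})\sum_\tau\|(|f_\tau|^2)^{\mathrm{high}}_\rho\|_2^2$; combining this with Hölder's inequality and the local constancy of $g$ (to handle the factor $g^3$), and with the reverse square function estimate \eqref{reverse} of Guth--Wang--Zhang applied on each $\de^{-1}$-ball — which is what converts the resulting $\ell^2$-sum of the $f_\tau$ back into $\|f\|_8^2$ — bounds this term acceptably. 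In the low term, $g^{\mathrm{low}}_\rho$ only sees $\Ga$ at scale $\rho$: pigeonholing the caps $\tau$ into the $\sim\rho^{-1/2}$ arcs of angular width $\rho^{1/2}$ and parabolically rescaling each arc to the full cone turns the contribution of one arc into an instance of \eqref{L8} at the smaller scale $\de/\rho$, where the induction hypothesis applies, while the square function over the $\rho^{-1/2}$ arcs themselves is controlled by Córdoba's planar $L^4$ estimate again. Summing the geometric series in $\rho$ and bookkeeping the losses produces the final $\de^{-\e}$.

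The hard part will be the quantitative overlap estimate for the family $\{\tau-\tau\}$ — equivalently, a bound on the number of caps $\tau$ (or of quadruples of caps) whose associated slab meets a fixed frequency region — together with making it interact cleanly with the two-scale high-low split; this is the one place where curvature of $\Ga$ is genuinely used, and it is what pins the exponent at $8$ rather than anything larger. A secondary subtlety, worth flagging, is the passage inside the high term from $\sum_\tau\|f_\tau\|_{L^4}^4$-type quantities back to $\|f\|_8^8$: the tempting pointwise bound $\sum_\tau\||f_\tau|^2\|_2^2\le\|\sum_\tau|f_\tau|^2\|_2^2$ is circular, so one is forced to localize to $\de^{-1}$-balls and invoke \eqref{reverse} there, which is precisely where the sharp $L^4$ reverse square function estimate for the cone is needed.
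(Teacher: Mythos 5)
Your overall toolbox (high--low decomposition, induction on scales, locally constant heuristics, the Guth--Wang--Zhang input) is the right family of ideas, but the plan has concrete gaps at exactly the points where the proof has to work. First, the claim that the slabs $\{\tau-\tau\}$ ``have bounded overlap away from the origin'' is false in the range your scheme uses: at distance $r$ from the origin, a point lying in a light-cone direction belongs to roughly $r^{-1/2}$ of the slabs $\tau-\tau$ (the tangential constraint allows angular separation $\lesssim\de^{1/2}/r$, the normal constraint $\lesssim(\de/r)^{1/2}$, and for $r\le 1$ the latter is the binding one), so for frequency cutoffs $\rho$ down to $\de^{1/2}$ the multiplicity can be as large as $\de^{-1/4}$. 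A crude Plancherel-plus-overlap bound for the high term therefore loses a power of $\de$, and your outline offers no mechanism to recover it; handling this unbounded multiplicity is precisely the content of the paper's grouping of the high parts $g_{\tau,high}$ into planks $\om$ along a rescaled cone, the application of \eqref{reverse} to $\sum_\om h_\om$, the generalized Lemma \ref{kakeyalem} with its multi-scale $U_{\om_s}$ bookkeeping, and the local orthogonality estimate \eqref{L4onU}. Relatedly, your intended use of \eqref{reverse} is in the wrong direction: it bounds $\|f\|_4$ by the square function, so it cannot ``convert the $\ell^2$-sum of the $f_\tau$ back into $\|f\|_8^2$''; in the paper that conversion is carried out by Lemmas \ref{weightedL2}, \ref{localinterp} and \ref{interpolation}, after \eqref{reverse} has been applied not to $f$ on $\de^{-1}$-balls but to the high part of the square function itself, whose Fourier support lies (after rescaling) in $N_{\ga^{-2}\de}(\Ga_1)$.

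Second, the low-term reduction does not close as described. Coarsening into $\rho^{1/2}$-arcs and rescaling gives the scalar estimate at scale $\de/\rho$ for each arc separately, but to reassemble over arcs you need either a vector-valued or weighted form of the inductive hypothesis, or an $L^8$ square function estimate for the coarse caps of dimensions $\rho\times\rho^{1/2}\times1$; neither follows formally from the scalar statement, and C\'ordoba's estimate \eqref{cordobaest} is an $L^4$ statement for planar sectors which cannot supply the needed $L^8$ bound (directional square functions carry power losses for $p>4$). The paper avoids this obstruction by inducting in a different parameter: the angular scale $\de^{1/2}$ is kept fixed and the induction runs over the length $\ga$ of the planks via the generalized statement \eqref{genL8}; the low part of $\sum_\theta|f_\theta|^2$ is then pointwise essentially the same square function with planks of length $K^{-1}\ga$ (see \eqref{heuristicc}), so the induction hypothesis applies verbatim with no reassembly over arcs. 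Without substitutes for these two steps --- the unbounded-overlap accounting in the high term and a closable formulation of the induction for the low term --- the proposal is a plausible outline but not yet a proof.
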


\begin{remark}
\rm

The endpoint $p=8$ is sharp in the sense that if we consider the estimate
$$ \bigg\| (\sum_\tau|f_\tau|^2)^{1/2} \bigg\|_{L^p(\R^3)}\le C(p,\de) \|f\|_{L^p(\R^3)} $$ 
for $p>8$, then the best constant $C(p,\de)$ should be a positive power of $\de^{-1}$. We will give a sharp example in the Appendix.

If we remove the condition $\supp \wh f\subset N_{\de}(\Ga)$, then the best we can hope is an $L^4$-estimate. This result was implicitly proved by Mockenhaupt, Seeger and Sogge \cite{mockenhaupt1992wave}. Actually, by a duality argument, one can reduce the $L^4$ square function estimate to a maximal Nikodym estimate which is Lemma 1.4 in \cite{mockenhaupt1992wave}. 

From Theorem \ref{main-thm} in this paper, we easily see \eqref{L8} holds for $p=4$. Moreover, if we use trilinear restriction estimate for the cone, then we can prove \eqref{L8} for $p=6$. In order to prove for $p=8$, we need to do more work.
\end{remark}

\medskip

By Littlewood-Paley theory, we can prove a global version of Theorem \ref{L8thm}. Let us consider $\mathbb B^2:=\ZS^2\cap N_{\de}(\Ga)$ which is a band of width $\de$ in $\ZS^2$. Let $\{\De_j\}$ be a set of disjoint $\de\times\de^{1/2}$-rectangles that are contained in $\mathbb B^2$. Also, we choose $\{\Id^*_{\De_j}\}$ to be the corresponding smooth bump functions. We see that $\Id^*_{\De_j}(\xi/|\xi|)$ is a smooth cutoff function in the region $\{\xi\in\R^3: \xi/|\xi|\in\De_j\}$. Define $T_j f:=(\Id^*_{\De_j}(\xi/|\xi|)\wh f(\xi))^\vee$. Our global version is the following.

\begin{theorem}\label{globalL8}
Assuming $\wh f\subset \{\xi\in\R^3: \xi/|\xi|\in\mathbb B^2\}$, then for any $\e>0$ we have
\begin{equation}
    \big\| (\sum_j|T_j f|^2)^{1/2} \big\|_{L^8(\R^3)}\le C_\e \de^{-\e}\|f\|_{L^8(\R^3)}.
\end{equation}
\end{theorem}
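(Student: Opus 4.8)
The plan is to deduce Theorem \ref{globalL8} from the single-scale estimate \eqref{L8} of Theorem \ref{L8thm} by a Littlewood--Paley decomposition in the radial frequency variable, exploiting that the cone, unlike the paraboloid, is invariant under the isotropic dilations $\xi\mapsto\lambda\xi$. Write $f=\sum_{k\in\ZZ}P_kf$, where $P_k$ is the usual Littlewood--Paley projection to the annulus $|\xi|\sim 2^k$. Since $\wh f$ is supported in $\{\xi:\xi/|\xi|\in\mathbb{B}^2\}$, the function $\wh{P_kf}$ is supported in $2^kN_\de(\Ga)$, a dilated copy of $N_\de(\Ga)$. As $T_j$ and $P_k$ are both Fourier multipliers they commute, and $T_jP_kf$ has Fourier support in $\{\xi:\xi/|\xi|\in\De_j,\ |\xi|\sim 2^k\}$, a plank of dimensions $\sim 2^k\de\times 2^k\de^{1/2}\times 2^k$; up to bounded overlap, as $j$ varies these planks run over a rescaled copy of the canonical plank family $\Tau$ covering $N_\de(\Ga)$, and the smooth cutoffs $\Id^\ast_{\De_j}(\cdot/|\cdot|)$ restricted to $|\xi|\sim 2^k$ are comparable to the $\Id^\ast_\tau$. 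Applying \eqref{L8} after rescaling by $2^{-k}$ (under which the $L^8$ norm is invariant up to the same power of $2^k$ on both sides) gives, uniformly in $k$,
\begin{equation}\label{onescale}
    \Big\|\Big(\sum_j|T_jP_kf|^2\Big)^{1/2}\Big\|_{L^8(\R^3)}\le C_\e\de^{-\e}\,\|P_kf\|_{L^8(\R^3)}.
\end{equation}

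Next I would reassemble the scales. Let $\vec F:=(T_jf)_j$, viewed as an $\ell^2$-valued function, so that the left-hand side of the estimate in Theorem \ref{globalL8} equals $\|\vec F\|_{L^8(\ell^2)}$, and write $\vec F=\sum_k\vec F_k$ with $\vec F_k:=(T_jP_kf)_j$. By the previous paragraph each component of $\vec F_k$ has Fourier support in $|\xi|\sim 2^k$, so $\{\vec F_k\}_k$ is a Littlewood--Paley family of Hilbert-space-valued functions, and the Hilbert-valued Littlewood--Paley inequality (valid for $1<p<\infty$) gives
\begin{equation}\label{reassemble}
    \|\vec F\|_{L^8(\ell^2)}\ \lesssim\ \Big\|\Big(\sum_k\|\vec F_k\|_{\ell^2}^2\Big)^{1/2}\Big\|_{8}\ =\ \Big\|\Big(\sum_{j,k}|T_jP_kf|^2\Big)^{1/2}\Big\|_{8}.
\end{equation}
It thus remains to bound the right-hand side of \eqref{reassemble} by $C_\e\de^{-\e}\|f\|_8$; by the scalar Littlewood--Paley inequality $\|f\|_8\sim\|(\sum_k|P_kf|^2)^{1/2}\|_8$, this is equivalent to
\begin{equation}\label{vectorversion}
    \Big\|\Big(\sum_{j,k}|T_jP_kf|^2\Big)^{1/2}\Big\|_{8}\ \lesssim_\e\ \de^{-\e}\,\Big\|\Big(\sum_k|P_kf|^2\Big)^{1/2}\Big\|_{8},
\end{equation}
which is exactly the $\ell^2$-valued (in $k$) enhancement of \eqref{onescale}.

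The main obstacle is \eqref{vectorversion}: it does not follow from \eqref{onescale} by soft reasoning alone, since uniform $L^8$-boundedness of a family of operators does not imply any bound on the $\ell^2$-combination of their outputs. What makes it work here is that the operators in \eqref{onescale} are dyadic dilates of the single linear, frequency-localized operator $g\mapsto(g_\tau)_{\tau\in\Tau}\colon L^8\to L^8(\ell^2_\tau)$ from Theorem \ref{L8thm}; consequently the outputs $\vec F_k$ occupy disjoint frequency annuli and the estimates \eqref{onescale} can be glued in the standard Littlewood--Paley fashion. Concretely, the high-low method used to prove Theorem \ref{L8thm} is robust and runs unchanged with $\ell^2$-valued inputs (equivalently, at all dyadic scales simultaneously), which yields \eqref{vectorversion} directly; alternatively one combines a Marcinkiewicz--Zygmund-type vector-valued extension of the bound $g\mapsto(g_\tau)_\tau\colon L^8\to L^8(\ell^2_\tau)$ with the dilation structure. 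Two routine verifications remain: (i) that the $\De_j$-sectors truncated to $|\xi|\sim 2^k$ correspond, up to bounded overlap and comparable cutoffs, to the rescaled planks, so that $\sum_{j,k}|T_jP_kf|^2$ is comparable to $\sum_\tau|f_\tau|^2$ over the full plank family of $N_\de(\{\xi_1^2+\xi_2^2=\xi_3^2\})$; and (ii) that the truncation $1/2\le|\xi_3|\le1$ in the definition of $\Ga$ plays no role once one passes to all dyadic scales.
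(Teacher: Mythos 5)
Your overall skeleton (radial Littlewood--Paley decomposition, rescaling each annulus to apply Theorem \ref{L8thm} at a single scale, then reassembling the scales) is exactly the reduction the paper has in mind when it says Theorem \ref{globalL8} follows from Theorem \ref{L8thm} ``via a global-to-local reduction similar to the one in Section 2'', and your vector-valued Littlewood--Paley step moving the $k$-sum inside the square function is fine. The problem is the step you yourself flag as the main obstacle: passing from the uniform single-scale bound $\sup_k\|(\sum_j|T_jP_kf|^2)^{1/2}\|_8\lesssim_\e\de^{-\e}\|P_kf\|_8$ to the multi-scale estimate $\|(\sum_{j,k}|T_jP_kf|^2)^{1/2}\|_8\lesssim_\e\de^{-\e}\|f\|_8$. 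At that point your proposal substitutes assertion for proof. The Marcinkiewicz--Zygmund route does not apply as stated: MZ extends a \emph{single} bounded operator to $\ell^2$-valued inputs, whereas here each scale carries a \emph{different} (dilated) operator applied to the corresponding Littlewood--Paley piece, and uniform bounds for a family of frequency-localized operators do not formally yield the $\ell^2$-glued bound --- indeed the correct statement carries a logarithmic loss, which already shows it cannot be soft. The alternative claim that the high-low proof of Theorem \ref{L8thm} ``runs unchanged with $\ell^2$-valued inputs'' is also not something one can wave through: the inputs live at different dyadic scales, so this is not a single-scale vector-valued variant but a genuinely multi-scale statement, and it would at minimum require vector-valued versions of the ingredients (e.g.\ the Guth--Wang--Zhang inequality \eqref{reverse} and the induction on $\ga$), none of which you verify.

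The missing ingredient is already in the paper: Lemma \ref{reductionlem} (from \cite{guo2020sharp}, Proposition 4.2) is stated for general $p\in[2,\infty)$ and is designed precisely for this passage. Take $m_j(\xi)=\Id^*_{\De_j}(\xi/|\xi|)\rho(|\xi|)$, so that $T_{j,k}=T_jP_k$; the symbol bounds \eqref{condition2} hold with $B=\de^{-O(1)}$, the hypothesis \eqref{condition1} with $s=2$ is Plancherel, and with $s=8$ it is your rescaled single-scale estimate (after the routine comparison of the conical cutoffs at $|\xi|\sim 1$ with the plank cutoffs of Theorem \ref{L8thm}, which you correctly list as a verification). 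Then \eqref{globales} with $p=8$, $A=C_\e\de^{-\e}$ gives
\begin{equation*}
    \Big\|\Big(\sum_{k}\sum_{j}|T_jP_kf|^2\Big)^{1/2}\Big\|_{8}\ \lesssim\ C_\e\de^{-\e}\big(\log\de^{-1}\big)^{3/8}\|f\|_{8}\ \lesssim_\e\ \de^{-2\e}\|f\|_{8},
\end{equation*}
which, combined with your reassembly step, completes the proof. So the gap is fillable, but as written the decisive estimate is claimed rather than proved, and the tool that proves it (Lemma \ref{reductionlem}, used in Section 2 for $p=4$) is never invoked.
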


In the end of this section, let us talk about the structure of the paper. In Section \ref{sec2}, we do a global-to-local reduction in the frequency space to reduce the problem to the case that $\supp\wh f\subset \{|\xi|\sim 1\}$. In Section \ref{sec3}, we prove Theorem \ref{main-thm}. In Section \ref{sec4}, we prove Theorem \ref{main-thm-2}. In Section \ref{sec5}, we prove Theorem \ref{L8thm}. In the Appendix, we discuss some examples. 

\medskip

\noindent
{\bf Acknowlegement}. The authors would like to thank Francesco Di Plinio and Ioannis Parissis for some useful discussions, and for bringing their papers to our attention.

\section{The global-to-local reduction}\label{sec2}
The main goal of this section is to reduce Theorem \ref{main-thm} and Theorem \ref{main-thm-2} to a local version. That is to say, we only need to prove the case when $\supp\wh f\subset \{|\xi|\sim 1\}$.

We may assume $\De_j$ are within the conical region $\{\om\in\ZS^{n-1}:\om\cdot e_n\ge 1\}$ and $\{\De_j\}$ are $100C\de$-separated. Let $P_k$ be the Littlewood-Paley operator for the dyadic annulus, so that for any function $f$, $\wh{P_kf}$ is supported in the annulus $\{\xi:|\xi|\sim 2^k\}$ and $f=\sum_{k\in\ZZ}P_k f$. 
More precisely, we choose a function $\rho(r)$ supported in $[1/4,4]$ such that $\sum_{k\in\ZZ}\rho(r/2^k)=1$, and set $P_k f:=(\rho(|\xi|/2^k)\wh f(\xi))^{\vee}$. Choose $m=10^n$ which is a large number. 
For each integer $0\le i\le m-1$,
define $\ck_i:=m\ZZ+i$, so that there is a partition $\ZZ=\sqcup_{0\le i\le m-1} \ck_i$. 
We have 
\begin{align}
\label{before-LP}
\Big\|\Big(\sum_{j}|T_j f|^2\Big)^{1/2}\Big\|_4^4=
    &\Big\|\Big(\sum_{j}\Big|\sum_{k
    \in \ZZ}T_j P_kf\Big|^2\Big)^{1/2}\Big\|_4^4\\
\lesssim \sum_{0\le i\le m-1}
    &\Big\|\Big(\sum_{j}\Big|\sum_{k
    \in \ck_i}T_j P_kf\Big|^2\Big)^{1/2}\Big\|_4^4.
\end{align}
For simplicity, we just write $\ck_i$ as $\ck$. The only property of $\ck$ we will use is that for any two different numbers $k_1, k_2\in\ck$, we have $|k_1-k_2|\ge 10^n$.
The goal of Theorem \ref{main-thm} is to prove
\begin{equation}
    \Big\|\Big(\sum_{j}\Big|\sum_{k
    \in \ck}T_j P_kf\Big|^2\Big)^{1/2}\Big\|_4^4\lesssim (\log\de^{-1})^{2}\|f\|_4^4.
\end{equation}
For convenience, let us denote
\begin{equation}
    I=\Big\|\Big(\sum_{j}\Big|\sum_{k
    \in \ck}T_j P_kf\Big|^2\Big)^{1/2}\Big\|_4^4, \hspace{.5cm} II=\Big\|\Big(\sum_{j}\sum_{k
    \in \ck}|T_j P_kf|^2\Big)^{1/2}\Big\|_4^4.
\end{equation}
We begin with the first estimate.
\begin{lemma}
Let I and II be as above. We have
$$ I\le 1000 II. $$
\end{lemma}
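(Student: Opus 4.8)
The plan is to read the passage from $I$ to $II$ as a \emph{reverse Littlewood--Paley inequality} applied to a Hilbert-space-valued function. The one input we use about $\ck$ is that distinct $k_1,k_2\in\ck$ satisfy $|k_1-k_2|\ge 10^n$, so the annuli $\{|\xi|\sim 2^k\}$, $k\in\ck$, are pairwise disjoint and indeed well separated on the logarithmic scale. Write $g_j:=\sum_{k\in\ck}T_jP_kf$, so that $I^{1/4}=\|(g_j)_j\|_{L^4(\ZR^n;\ell^2_j)}$.

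First I would fix a fattened Littlewood--Paley cutoff: choose $\wt\rho$ supported in $[2^{-5},2^5]$ with $\wt\rho\equiv 1$ on $[2^{-3},2^3]$, and set $\wt P_kf:=(\wt\rho(|\xi|/2^k)\wh f(\xi))^\vee$. Since $\rho$ is supported in $[1/4,4]$, $\wh{P_kf}$ lives in $\{2^{-2}\le |\xi|/2^k\le 2^2\}$, and as $T_j$ multiplies by a function of the angle only it does not enlarge the radial support; hence $\wt P_k(T_jP_kf)=T_jP_kf$. On the other hand $\wt\rho(|\cdot|/2^k)$ is supported in $\{2^{k-5}\le|\xi|\le 2^{k+5}\}$, so for $k\ne k'$ in $\ck$ the gap $|k-k'|\ge 10^n\ge 100$ forces $\wt P_k\wt P_{k'}=0$. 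Consequently, setting $G_k:=(T_jP_kf)_j$ (an $\ell^2_j$-valued function) and $F:=(g_j)_j=\sum_{k\in\ck}G_k$, we get $\wt P_kF=G_k$ for each $k\in\ck$; i.e. the $G_k$ are precisely the (fattened) Littlewood--Paley pieces of $F$, with disjoint frequency annuli.

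Next I would invoke the vector-valued reverse square-function inequality: for a Hilbert space $\mathcal H$, any $1<p<\infty$, and any $H=\sum_{k\in\ck}H_k$ with $\wh{H_k}$ supported in $\{|\xi|\sim 2^k\}$,
$$\Big\|\sum_{k\in\ck}H_k\Big\|_{L^p(\ZR^n;\mathcal H)}\lesssim_{p}\Big\|\big(\textstyle\sum_{k\in\ck}\|H_k\|_{\mathcal H}^2\big)^{1/2}\Big\|_{L^p(\ZR^n)}.$$
This is classical: inserting Rademacher signs $\e_k$ and applying Khintchine's inequality reduces it to the uniform boundedness on $L^p(\ZR^n;\mathcal H)$ of the multiplier operators with symbol $\sum_{k\in\ck}\e_k\wt\rho(|\xi|/2^k)$, and on each annulus only one summand is active, so these are Mikhlin--H\"ormander multipliers with constants independent of the signs. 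Applying this with $\mathcal H=\ell^2_j$, $p=4$, $H=F$, $H_k=G_k$, and using $\|G_k\|_{\ell^2_j}^2=\sum_j|T_jP_kf|^2$, gives
$$I^{1/4}=\|F\|_{L^4(\ell^2_j)}\lesssim \Big\|\big(\textstyle\sum_{k\in\ck}\sum_j|T_jP_kf|^2\big)^{1/2}\Big\|_4=II^{1/4},$$
hence $I\lesssim II$ with an absolute implied constant, which is at most $1000$.

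The only step needing genuine care is the uniform Mikhlin bound for the randomized multipliers (equivalently, citing the vector-valued Littlewood--Paley theorem); everything else is bookkeeping about frequency supports. A self-contained alternative would be to expand $I=\|\sum_j|g_j|^2\|_{L^2}^2$ by Plancherel and control the frequency supports of the products $T_jP_{k_1}f\,\overline{T_jP_{k_2}f}$ using the $10^n$-separation directly, but the Littlewood--Paley route is cleaner and yields the clean numerical constant with no effort.
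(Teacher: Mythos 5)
Your proposal is correct, but it takes a genuinely different route from the paper's. The paper proves the lemma by brute force: it expands $I$ as a fourfold integral, splits into the cases $j_1\ne j_2$ and $j_1=j_2$, decomposes the quadruples $(k_1,k_2,k_3,k_4)\in\ck^4$ by an inclusion--exclusion scheme, kills the generic quadruples via Plancherel together with a support-disjointness argument that uses \emph{both} the radial separation $|k_1-k_3|\ge 10^n$ and the angular separation of $\De_{j_1}$ and $\De_{j_2}$, and then absorbs the remaining terms by Cauchy--Schwarz into $(8/100)I+900\,II$, which yields the explicit constant $1000$ after rearranging. You instead view $(g_j)_j$ as an $\ell^2_j$-valued function, check that its fattened Littlewood--Paley pieces are exactly $(T_jP_kf)_j$ thanks to the $10^n$-separation of $\ck$ (your verification that $\wt P_k$ annihilates the other pieces and fixes the $k$-th one is the right bookkeeping), and then quote the Hilbert-space-valued reverse Littlewood--Paley inequality, proved by randomization, Khintchine, and the uniform Mikhlin bound for scalar symbols acting on $\ell^2$-valued functions. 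Both arguments are complete. Yours is shorter and more conceptual, requires no angular separation of the $\De_j$'s at all, and sidesteps the implicit a priori finiteness of $I$ that the paper's absorption step needs; the paper's argument, by contrast, is entirely self-contained (no vector-valued multiplier theory) and delivers the stated numerical constant. One caveat: your closing remark that the implied constant ``is at most $1000$'' is not justified --- the constant coming from vector-valued Mikhlin plus Khintchine at $p=4$ depends on the dimension $n$ and is not obviously that small --- so strictly speaking you prove $I\le C(n)\,II$ rather than $I\le 1000\,II$; since the paper only ever uses $I\lesssim II$ uniformly in $\de$, this is immaterial to the application, but it should be stated honestly.
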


\begin{proof}
After expanding the $L^4$-norm, we get
\begin{align}
\label{off-diagonal-1}
    I&= \sum_{j_1\not=j_2}\sum_{k_1,k_2,k_3,k_4\in\ck}\int (T_{j_1} P_{k_1}f\cdot T_{j_2} P_{k_2}f)(\overline{T_{j_1} P_{k_3}f\cdot T_{j_2} P_{k_4}f})\\ \label{diagonal-1}
    &+\sum_{j_1=j_2}\sum_{k_1,k_2,k_3,k_4\in\ck}\int (T_{j_1} P_{k_1}f\cdot T_{j_2} P_{k_2}f)(\overline{T_{j_1} P_{k_3}f\cdot T_{j_2} P_{k_4}f}).
\end{align}
Denote $\vec k=(k_1,k_2,k_3,k_4)$. In the following discussion, we would like to find a partition of the set $\ck^4$ and hence a partition of the summation $\sum_{\vec k\in\ck^4}$.
We will discuss the two terms \eqref{off-diagonal-1}, \eqref{diagonal-1} separately.

\smallskip

\textit{Case 1}: $j_1\not=j_2$.

When $j_1\not=j_2$, for the quadruples $\vec k=(k_1,k_2,k_3,k_4)\in\ck^4$, we consider the following subsets of $\ck^4$:
\begin{enumerate}
    \item $A=\{k_1=k_3\}$.
    \item $B=\{k_2=k_4\}$.
    \item $C=\{k_1=k_3,k_2=k_4\}$.
    \item $D=\{k_1\not=k_3,k_2\not=k_4\}$.
\end{enumerate}
Then we see that $\Id_{\ck^4}=\Id_A+ \Id_B+ \Id_D- \Id_C$. 

For $\vec k\in D$, we have by Plancherel that
\begin{align}
    &\int (T_{j_1} P_{k_1}f\cdot T_{j_2} P_{k_2}f)(\overline{T_{j_1} P_{k_3}f\cdot T_{j_2} P_{k_4}f})\\ \label{L4}
    =&\int \wh{T_{j_1} P_{k_1}f}\ast \wh{T_{j_2} P_{k_2}f}\cdot\overline{\wh{T_{j_1} P_{k_3}f}\ast\wh{T_{j_2} P_{k_4}f}}.
\end{align}
We claim this integral is $0$. First, by definition we have \begin{equation}
    \supp \wh{T_{j} P_{k}f}\subset \{\xi: \xi/|\xi|\in \De_j, |\xi|\sim 2^k \}.
\end{equation}
Define
\begin{equation}
    \tau_{k,j}:=\{\xi: \xi/|\xi|\in \De_j, |\xi|\sim 2^k \},
\end{equation}
then we see that $\tau_{k,j}$ is morally a tube of length $2^k$ and radius $\de 2^k$, pointing to the direction $c_{\De_j}\in \ZS^{n-1}$ ($c_{\De_j}$ is the center of $\De_j$).
We note that the support of the integrand of \eqref{L4} is contained in $(\tau_{j_1,k_1}+\tau_{j_2,k_2})\cap (\tau_{j_1,k_3}+\tau_{j_2,k_4})$. To show $\eqref{L4}=0$, it suffices to show $(\tau_{j_1,k_1}+\tau_{j_2,k_2})\cap (\tau_{j_1,k_3}+\tau_{j_2,k_4})=\emptyset$, or equivalently $$(\tau_{j_1,k_1}-\tau_{j_1,k_3})\cap (\tau_{j_2,k_4}-\tau_{j_2,k_2})=\emptyset.$$
Since $\vec k\in D$, we have $k_1\neq k_3$, so $|k_1-k_3|\ge 10^n >>1$. We may assume $k_1>k_3$. By an easy geometry, we see that $\tau_{j_1,k_1}-\tau_{j_1,k_3}\subset (1+\frac{1}{10})\tau_{j_1,k_1}$. Here $(1+\frac{1}{10})\tau_{j_1,k_1}$ is the dilation of $\tau_{j_1,k_1}$ with respect to its center (regarding $\tau_{j_1,k_1}$ as a tube). Similarly, we have $\tau_{j_2,k_2}-\tau_{j_2,k_4}\subset (1+\frac{1}{10})\tau_{j_2,\max(k_2,k_4)}$. Note that $(1+\frac{1}{10})\tau_{j_1,k_1}$ is contained in the conical region $\{\xi: \xi/|\xi|\in N_{10C\de}\De_{j_1}\}$, while $(1+\frac{1}{10})\tau_{j_2,\max(k_2,k_4)}$ is contained in the conical region $\{\xi: \xi/|\xi|\in N_{10C\de}\De_{j_2}\}$. Since $j_1\neq j_2$, we have $\dist(\De_{j_1},\De_{j_2})\ge 100C\de$, so we prove the disjointness.

As for those quadruples $\vec k$ in $A$ or $B$, we have
\begin{align}
\label{two-the-same}
    &\sum_{j_1\not=j_2}\big(\sum_{\vec k\in A}+\sum_{\vec k\in B}\big)\int (T_{j_1} P_{k_1}f\cdot T_{j_2} P_{k_2}f)(\overline{T_{j_1} P_{k_3}f\cdot T_{j_2} P_{k_4}f})\\
    \leq&\int \Big(\sum_{j}\sum_{k
    \in \ck}|T_j P_k f|^2\Big)\cdot\Big(\sum_{j}\Big|\sum_{k
    \in \ck}T_j P_k f\Big|^2\Big), 
\end{align}
which, by using H\"older's inequality, is bounded by 
\begin{equation}
    [(1/100)I+100II]/2.
\end{equation}
Note that for those quadruples $\vec k \in C$, 
$$ \sum_{j_1\not=j_2}\sum_{\vec k\in C}\int (T_{j_1} P_{k_1}f\cdot T_{T_2} P_{k_2}f)(\overline{T_{j_1} P_{k_3}f\cdot T_{j_2} P_{k_4}f}) \le II.$$
Thus we obtain
\begin{equation}
\label{off-diagonal-2}
    \eqref{off-diagonal-1}\leq (1/100)I+100II.
\end{equation}

\medskip
\textit{Case 2}: $j_1=j_2$.

When $j_1=j_2$, we similarly consider the subsets of $\ck^4$:
\begin{enumerate}
    \item $E_1=\{k_1=k_2\}$, $E_2=\{k_1=k_3\}$, $E_3=\{k_1=k_4\}$, $E_4=\{k_2=k_3\}$, $E_5=\{k_2=k_4\}$, $E_6=\{k_3=k_4\}$.
    \item $F_1'=\{k_1=k_2,k_3=k_4\}$,
    $F_2'=\{k_1=k_3,k_2=k_4\}$,
    $F_3'=\{k_1=k_4,k_2=k_3\}$.
    
    \item $F_1=\{k_1=k_2=k_3\}$, $F_2=\{k_1=k_2=k_4\}$, $F_3=\{k_1=k_3=k_4\}$, $F_4=\{k_2=k_3=k_4\}$.
    \item $G=\{k_1=k_2=k_3=k_4\}$.
    \item $H=\{k_1,k_2,k_3,k_4~\textup{are all different}\}$.
\end{enumerate}
Then by inclusion-exclusion principle, one can express $\ck^4$ as a linear combination of the aforementioned collection of quadruples: 
\begin{equation}
    \Id_{\ck^4}=\sum_{i=1}^6 \Id_{E_i}-\sum_{i=1}^3\Id_{F'_i}-2\sum_{i=1}^4\Id_{F_i}+6\cdot\Id_G+\Id_H.
\end{equation}
Note that when $j_1=j_2$ and $\vec k\in H$, \eqref{diagonal-1} is zero.  

As for those quadruples in $E_i$, similar to \eqref{two-the-same} we get
\begin{align}
    &\sum_{j_1=j_2}\sum_{\vec k\in E_i}\int (T_{j_1} P_{k_1}f\cdot T_{j_2} P_{k_2}f)(\overline{T_{j_1} P_{k_3}f\cdot T_{j_2} P_{k_4}f})\\
    \leq&\int \Big(\sum_{j}\sum_{k
    \in \ck}|T_j P_k f|^2\Big)\cdot\Big(\sum_{j}\Big|\sum_{k
    \in \ck}T_j P_k f\Big|^2\Big)\leq [(1/100)I+100II]/2.
\end{align}

For those $\vec k\in F_2'$ or $F_3'$, we have:
$$ \sum_{j_1=j_2}\sum_{\vec k\in F_2'(\textup{or} F_3')}\int (T_{j_1} P_{k_1}f\cdot T_{T_2} P_{k_2}f)(\overline{T_{j_1} P_{k_3}f\cdot T_{j_2} P_{k_4}f})=\int\sum_j(\sum_\ck|T_jP_kf|^2)^2 \le II.$$

For $\vec k\in F_1'$, we have: 
$$ \sum_{j_1=j_2}\sum_{\vec k\in F_1'}\int (T_{j_1} P_{k_1}f\cdot T_{T_2} P_{k_2}f)(\overline{T_{j_1} P_{k_3}f\cdot T_{j_2} P_{k_4}f})=\int\sum_j(\sum_\ck(T_jP_kf)^2)^2 \le II.$$

For those quadruples in $F_i$, for example in $F_1$, we get
\begin{align}
    &\sum_{j_1=j_2}\sum_{\vec k\in F_1}\int (T_{j_1} P_{k_1}f\cdot T_{j_2} P_{k_2}f)\overline{(T_{j_1} P_{k_3}f\cdot T_{j_2} P_{k_4}f})\\
    =&\sum_j\int \sum_{k_1}|T_{j} P_{k_1}f|^2\cdot T_{j} P_{k_1}f\cdot\sum_{k_4}\overline{T_{j} P_{k_4}f},
\end{align}
whose absolute value, by Cauchy-Schwartz inequality, is bounded above by 
\begin{align}
    &\sum_{j}\int\Big(\sum_{k_1}|T_{j} P_{k_1}f|^4\Big)^{1/2}\Big(\sum_{k_1} |T_{j} P_{k_1}f|^2\Big|\sum_{k_4}\overline{T_{j} P_{k_4}f}\Big|^2\Big)^{1/2}\\
    \leq  &\int\sum_j\sum_{k_1}|T_{j} P_{k_1}f|^4+\int\sum_j\sum_{k_1} |T_{j} P_{k_1}f|^2\Big|\sum_{k_4}T_{j} P_{k_4}f\Big|^2\\[1ex]
    \leq &\,II+[(1/100)I+100II]/2.
\end{align}

Finally, for those quadruples in $G$, we can easily get
\begin{align}
    &\sum_{j_1=j_2}\sum_{\vec k\in G}\int (T_{j_1} P_{k_1}f\cdot T_{j_2} P_{k_2}f)(\overline{T_{j_1} P_{k_3}f\cdot T_{j_2} P_{k_4}f})\\
    =&\int\sum_j\sum_{k}|T_{j} P_{k}f|^4\leq II.
\end{align}
Putting all estimates above together, we reach
\begin{align}
\label{diagonal-2}
    \eqref{diagonal-1}&\leq 3[(1/100)I+100II]+8II+4[(1/100)I+100II]+II\\
    &\leq (7/100)I+800II.
\end{align}

Now plug \eqref{off-diagonal-2} and \eqref{diagonal-2} \eqref{before-LP} so that
\begin{align}
    \eqref{before-LP}=I\leq (8/100)I+900II,
\end{align}
which gives $\eqref{before-LP}=I\leq 1000II$.
\end{proof}

Hence it remains to show
\begin{equation}
\label{reduction-1}
    \sqrt[4]{II}=\Big\|\Big(\sum_{j}\sum_{k
    \in \ck}|T_j P_kf|^2\Big)^{1/2}\Big\|_4\lesssim(\log \de^{-1})^{1/2}\|f\|_4.
\end{equation}
The desired estimate \eqref{reduction-1} can be further reduced to the following local estimate.
\begin{equation}
\label{reduction-2}
    \Big\|\Big(\sum_{j}|T_j P_1 f|^2\Big)^{1/2}\Big\|_4\lesssim(\log \de^{-1})^{1/4}\|f\|_4.
\end{equation}
It is a local version of \eqref{main-esti}. We also remark that in the local version, the constant $(\log\de^{-1})^{1/4}$ is better than the global version.
To deduce \eqref{reduction-1} from \eqref{reduction-2}, we need the following lemma. It is from \cite{guo2020sharp} Proposition 4.2 (see also \cite{jones2008strong} and \cite{seeger1988some}).

\begin{lemma}\label{reductionlem}
Let $\{m_j(\xi)\}_{j\in \mathcal J}$ be a set of Fourier multipliers on $\R^n$, each of which is compactly supported on $\{\xi:1/2\le |\xi|\le 2\}$, and satisfies
\begin{equation}\label{condition2}
\sup_{j\in\cj} |\partial^\al_\xi m_j(\xi)|\le B\ \textup{for~each~}0\le|\al|\le n+1
\end{equation}  
for some constant $B$. For $j\in\mathcal J$ and $k\in\ZZ$, write $T_{j,k}$ the multiplier operator with multiplier $m_j(2^{-k}\xi)$. Fix some $p\in [2,\infty)$. Assume that there exists some constant $A$ such that 
\begin{equation}\label{condition1}
    \sup_{k\in\ZZ}\big\| (\sum_{j\in\cj} |T_{j,k}f|^2)^{1/2} \big\|_{L^s(\R^n)}\le A\|f\|_{L^s(\R^n)}
\end{equation}
for both $s=p$ and $s=2$. Then
\begin{equation}\label{globales}
    \bigg\| (\sum_{k\in\ZZ}\sum_{j\in\cj}|T_{j,k}f|^2)^{1/2} \bigg\|_{L^p(\R^n)}\lesssim A\bigg|\log\bigg(2+\frac{B}{A}\bigg)\bigg|^{\frac{1}{2}-\frac{1}{p}}\|f\|_{L^p(\R^n)}.
\end{equation}
\end{lemma}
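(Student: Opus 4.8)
The plan is to prove \eqref{globales} by first disposing of the $L^2$ endpoint with no loss, and then bootstrapping from a single scale to all scales through a two–parameter mechanism: one groups the dyadic scales into $N$ residue classes, which costs a factor $N^{1/2-1/p}$, and on each residue class one exploits that the remaining scales are $N$–separated to run an interpolation against the $L^2$ bound whose error is damped by the frequency separation; balancing $N$ against $B/A$ then produces the claimed $(\log(2+B/A))^{1/2-1/p}$. Throughout, write $m_j(2^{-k}\xi)=m_j(2^{-k}\xi)\,\psi(2^{-k}\xi)$ for a fixed bump $\psi$ equal to $1$ on $\{1/2\le|\xi|\le 2\}$ and supported on $\{1/4\le|\xi|\le 4\}$, so that $T_{j,k}=T_{j,k}\tilde P_k$ with $\{\tilde P_k\}$ a Littlewood–Paley family of finite overlap; by Plancherel and the single–scale $L^2$ hypothesis \eqref{condition1},
\[
\Big\|\big(\textstyle\sum_{k,j}|T_{j,k}f|^2\big)^{1/2}\Big\|_2^2=\sum_k\Big\|\big(\textstyle\sum_j|T_{j,k}\tilde P_kf|^2\big)^{1/2}\Big\|_2^2\le A^2\sum_k\|\tilde P_kf\|_2^2\lesssim A^2\|f\|_2^2 .
\]

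Next I would reduce to $N$–separated families. Fix an integer $N\ge 1$, split $\ZZ=\bigsqcup_{l=0}^{N-1}\ck_l$ with $\ck_l=N\ZZ+l$, and let $P^{(l)}$ be the fat Littlewood–Paley projection onto $\bigcup_{k\in\ck_l}\{|\xi|\sim 2^k\}$, so that $T_{j,k}f=T_{j,k}P^{(l)}f$ for $k\in\ck_l$. By Hölder in the $l$–index (only $N$ terms) one has $\big(\sum_{k\in\ZZ}\sum_j|T_{j,k}f|^2\big)^{1/2}\le N^{\frac12-\frac1p}\big(\sum_{l}(\sum_{k\in\ck_l}\sum_j|T_{j,k}f|^2)^{p/2}\big)^{1/p}$ pointwise, and since $\sum_l\|P^{(l)}f\|_p^p\lesssim_p\|f\|_p^p$ (because $p\ge 2$ and the pieces $\tilde P_k$ within a residue class are lacunary), it suffices to prove, for every $N$–separated $\ck\subset\ZZ$,
\begin{equation}\label{Nsep}
\Big\|\big(\textstyle\sum_{k\in\ck,\,j}|T_{j,k}f|^2\big)^{1/2}\Big\|_p\lesssim_p \big(A+2^{-cN}B\big)\,\|f\|_p
\end{equation}
for some absolute $c>0$; feeding this back gives $\|(\sum_{k,j}|T_{j,k}f|^2)^{1/2}\|_p\lesssim_p N^{\frac12-\frac1p}(A+2^{-cN}B)\|f\|_p$, and choosing $N=O(1)$ when $B\le 2A$ and $N=\lceil c^{-1}\log(B/A)\rceil$ otherwise yields \eqref{globales}.

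The heart is therefore \eqref{Nsep}. Since $\ck$ is $N$–separated I would write $T_{j,k}=T_{j,k}Q_k$ where $\{Q_k\}_{k\in\ck}$ are smooth Littlewood–Paley projections onto fattened annuli pairwise separated by a factor $\ge 2^{N/2}$, so that $\|(\sum_{k\in\ck}|Q_kf|^2)^{1/2}\|_p\lesssim_p\|f\|_p$ with a constant independent of $N$; combining this with the ($\ell^2_j$–valued) vector–valued Littlewood–Paley inequality reduces \eqref{Nsep} to a weighted estimate. Dualizing the $L^{p/2}$ norm against $0\le w$ with $\|w\|_{(p/2)'}=1$, one must bound $\sum_{k\in\ck}\int(\sum_j|T_{j,k}f|^2)\,w$. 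For each $k$ the function $\sum_j|T_{j,k}f|^2$ is essentially constant at scale $2^{-k}$, so $w$ may be replaced by its local average at that scale; decomposing this localized weight into dyadic level sets $\{\,\cdot\sim 2^m\}$ and interpolating, on each level, between the single–scale $L^p$ bound \eqref{condition1} (carrying the exponent $\tfrac1p$) and the single–scale $L^2$ bound (carrying $\tfrac12$), with the weighted height $2^m$ as the interpolation parameter, gives the main term $\lesssim_p A\|f\|_p$; the frequency separation of the $Q_k$ together with the kernel decay $|m_j^\vee(x)|\lesssim B(1+|x|)^{-(n+1)}$ coming from \eqref{condition2} controls the off–diagonal contribution by $2^{-cN}B\|f\|_p$.

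The main obstacle is precisely this last step: $\sum_j|T_{j,k}f|^2$ is a square function, hence \emph{not} frequency localized, so the $2^{-k}$–scale pieces at distinct $k$ do not decouple for free, and one must genuinely convert the frequency separation plus the $C^{n+1}$ bound $B$ into a quantitative $2^{-cN}$ gain on the cross terms while simultaneously organizing the interpolation between the $L^2$ and $L^p$ single–scale estimates through the dyadic decomposition of the localized dual weight. Steps 1, 2 and the final optimization are routine bookkeeping; it is this interplay — and the fact that the number of weight levels that contribute non-negligibly is $\sim\log(B/A)$ — that both forces and accounts for the loss $(\log(2+B/A))^{1/2-1/p}$ in \eqref{globales}.
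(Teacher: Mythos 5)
You should first note that the paper does not prove Lemma \ref{reductionlem} at all: it is quoted from \cite{guo2020sharp} (Proposition 4.2), with antecedents in \cite{seeger1988some} and \cite{jones2008strong}. So the only question is whether your blind argument is itself a complete proof, and it is not. Your $L^2$ step, the residue-class decomposition costing $N^{\frac12-\frac1p}$, the estimate $\sum_l\|P^{(l)}f\|_p^p\lesssim_p\|f\|_p^p$ for $p\ge 2$, and the final optimization in $N$ are all routine and correct; but the whole proof rests on the unproved assertion that for an $N$-separated set of scales $\ck$ one has $\big\|(\sum_{k\in\ck}\sum_j|T_{j,k}f|^2)^{1/2}\big\|_p\lesssim_p (A+2^{-cN}B)\|f\|_p$, and you concede in your final paragraph that this step is exactly what you have not carried out.

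The sketch you offer for that key claim does not amount to an argument. After dualizing the $L^{p/2}$ norm there are no cross-scale terms to call ``off-diagonal'': the difficulty is that one weight $w$ is shared by all scales, and the hypotheses \eqref{condition1} are \emph{unweighted} $L^2$ and $L^p$ bounds, so they cannot be ``interpolated on dyadic level sets of the localized weight'' --- a weighted single-scale bound such as $\int\sum_j|T_{j,k}f|^2\,w\lesssim A^2\int|f|^2\,M_kw$ simply does not follow from \eqref{condition1}. The only estimate your outline actually licenses is the per-scale H\"older bound $\int\sum_j|T_{j,k}f|^2\,w\le A^2\|\tilde P_kf\|_p^2$, and summing this gives $A^2\sum_k\|\tilde P_kf\|_p^2$, which is \emph{not} $\lesssim A^2\|f\|_p^2$ for $p>2$ (take $M$ spatially separated, $L^p$-normalized wave packets at $M$ distinct frequencies: the sum is $\sim A^2M$ while $A^2\|f\|_p^2\sim A^2M^{2/p}$). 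Nor do you give any mechanism converting $N$-separation plus $|m_j^\vee(x)|\lesssim B(1+|x|)^{-(n+1)}$ into the asserted $2^{-cN}B$ gain; a crude vector-valued Calder\'on--Zygmund attempt fails because \eqref{condition2} controls only $\sup_j$, not $\ell^2_j$-sums, of the kernels. Indeed it is not even evident that your separation claim is true --- extremizing configurations for such lemmas need not live on consecutive scales --- so it would require a genuine proof rather than a heuristic. The cited proofs proceed quite differently: in the tradition of \cite{seeger1988some} and \cite{jones2008strong}, one truncates the kernels at radius $(2+B/A)^{O(1)}$ times the scale (the tail having $L^1$ norm $O(A)$) and obtains the logarithmic endpoint behavior from good-$\lambda$/Chang--Wilson--Wolff-type comparisons with martingale square functions, interpolating with the $L^2$ bound to produce the exponent $\frac12-\frac1p$; none of that machinery, or a substitute for it, appears in your outline.
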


Let us discuss how to apply Lemma \ref{reductionlem}. First note that $$(T_jP_kf)^{\wedge}(\xi)=\Id^*_{\De_j}(\xi/|\xi|)\rho(2^{-k}|\xi|)\wh f(\xi).$$
If we assume \eqref{reduction-2} is true, by rescaling we have for any $k\in\ZZ$
\begin{equation}
    \Big\|\Big(\sum_{j}|T_j P_k f|^2\Big)^{1/2}\Big\|_4\lesssim(\log \de^{-1})^{1/4}\|f\|_4.
\end{equation}
We choose $m_j(\xi)=\Id^*_{\De_j}(\xi/|\xi|)\rho(\xi)$ in Lemma \ref{reductionlem}, so we have $T_{j,k}f=T_jP_kf$. We also choose $p=4$.
We can verify the constant $B=\de^{-O(1)}$ and $A=(\log\de^{-1})^{1/4}$ will make the conditions \eqref{condition2} and \eqref{condition1} in the lemma hold. As a result, from \eqref{globales} we obtain
$$ \Big\|\Big(\sum_{j}\sum_{k
    \in \ck}|T_j P_kf|^2\Big)^{1/2}\Big\|_4\lesssim A\bigg|\log\bigg(2+\frac{B}{A}\bigg)\bigg|^{1/4}\|f\|_4\lesssim (\log\de^{-1})^{1/2}\|f\|_4. $$
This gives the estimate \eqref{reduction-1}.

The proof of \eqref{reduction-2} is given in the next section.

\section{Proof of the local version}\label{sec3}
Recall we are given a set of disjoint $\de$-balls $\{\De_j\}$ in $\ZS^{n-1}$. Also recall the smooth Fourier restriction operator is defined as
$$ T_j f(x)=(\Id^*_{\De_j}(\xi/|\xi|)\wh f(\xi) )^\vee(x), $$
where $\Id^*_{\De_j}$ is a smooth bump function adapted to $\De_j$.

After the global-to-local reduction in the previous section, \eqref{main-esti} boils down to \eqref{reduction-2}, which is the following result
\begin{theorem}\label{localthm}
For any function $f$ with $\supp \wh f\subset \{\xi:100\le|\xi|\le 101\}$, we have
\begin{equation}\label{localineq}
    \bigg\|\big(\sum_{j}|T_j f|^2\big)^{1/2}\bigg\|_{L^4(\R^n)}\lesssim (\log \de^{-1})^{1/4} \|f\|_{L^4(\R^n)}, 
\end{equation} 
where $C>0$ is some universal constant.
\end{theorem}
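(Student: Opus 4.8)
\emph{Reduction; why $n\ge 3$ is harder.} Since $\big\|(\sum_j|T_jf|^2)^{1/2}\big\|_{L^4}^4=\big\|\sum_j|T_jf|^2\big\|_{L^2}^2$, writing $g:=\sum_j|T_jf|^2\ge 0$ the estimate \eqref{localineq} is equivalent to $\|g\|_{L^2(\R^n)}^2\lesssim(\log\de^{-1})\,\|f\|_{L^4(\R^n)}^4$. The Fourier transform $\wh g$ is supported in $\bigcup_j(\tau_j-\tau_j)$, a union of $\sim\de$-thick tubes through the origin, of length $O(1)$, pointing in the $\de$-separated directions $c_{\De_j}$. For $n=2$ this is C\'ordoba's theorem, whose usual proof works because the Minkowski differences $\{\tau_{j_1}-\tau_{j_2}\}$ have overlap $O(1)$; an elementary geometric computation shows that for $n\ge 3$ this overlap function is as large as $\de^{-(n-2)}$, so a pure Plancherel argument loses a power of $\de^{-1}$ and something genuinely finer is required. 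The plan is to run the high--low frequency method of \cite{guth2020sharp,guth2019incidence,guth2020improved}: split $g$ according to the dyadic distance of the frequency to the origin, bound each of the $O(\log\de^{-1})$ dyadic shells by $O(1)\cdot\|f\|_4^4$, and sum.

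\emph{The scale-by-scale estimate.} For dyadic $\de\le\rho\le 1$ let $\psi_\rho$ cut off to $\{|\xi|\sim\rho\}$, set $g_\rho:=g\ast\check\psi_\rho$, and let $g_{\mathrm{low}}$ be the part of $g$ with frequency in $B(0,\de)$; by $L^2$-orthogonality of the shells, $\|g\|_2^2\lesssim\|g_{\mathrm{low}}\|_2^2+\sum_\rho\|g_\rho\|_2^2$, and it suffices to prove $\|g_\rho\|_2^2\lesssim\|f\|_4^4$ uniformly in $\rho$ (together with $\|g_{\mathrm{low}}\|_2^2\lesssim\|f\|_4^4$). The key point for $\|g_\rho\|_2^2$ is that, at frequency $|\xi|\sim\rho$, the relevant part of $\wh{|T_jf|^2}$ is concentrated in a $\de$-ball near $\pm\rho\,c_{\De_j}$; so if one groups the indices $j$ into angular clusters of diameter $\rho^{-1}\de$, the frequency footprints at scale $\rho$ of distinct clusters are $\gtrsim\de$-separated, and the cluster-sum is almost orthogonal \emph{with no multiplicity loss}. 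This is exactly where it matters that each $\De_j$ is a genuine round $\de$-ball (equivalently, that we are dealing with the smooth multiplier). One is then left with $\sum_{\mathrm{clusters}}\big\|\sum_{j\in\mathrm{cluster}}|T_jf|^2\big\|_2^2$; after rescaling each cluster's sector to unit size this is again a square function estimate for $\rho$-balls, so the scale-by-scale bounds can be organized into an induction on the number of dyadic scales, the losses telescoping into the single factor $\log\de^{-1}$.

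\emph{Main obstacle.} The delicate term is $g_{\mathrm{low}}$, whose Fourier support $B(0,\de)$ offers no geometric separation, so the clustering/orthogonality mechanism is unavailable and the crude bound $\|g_{\mathrm{low}}\|_2^2\le\|g_{\mathrm{low}}\|_\infty\|g_{\mathrm{low}}\|_1\lesssim\de^n\|f\|_2^4$ fails to give $\|f\|_4^4$ whenever $\|f\|_2$ is much larger than $\de^{-n/4}\|f\|_4$. The way out I would take exploits that $g_{\mathrm{low}}$, being Fourier-supported in $B(0,\de)$, is essentially constant on every ball of radius $\de^{-1}$: using $g\ge 0$, $g_{\mathrm{low}}(x)$ is controlled by the average of $g$ over such a ball, and one then feeds this local average back into the argument at the reciprocal scale. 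Making this low-frequency recursion quantitatively compatible with the sharp $\log\de^{-1}$ count — so that the per-shell constants remain absolute rather than accumulating — is what I expect to be the main technical difficulty; the wave-packet and rescaling manipulations, and verifying the hypotheses of Lemma \ref{reductionlem} back in Section \ref{sec2}, should be routine by comparison.
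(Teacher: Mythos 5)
Your opening moves coincide with the paper's: write $\big\|(\sum_j|T_jf|^2)^{1/2}\big\|_4^4=\|g\|_2^2$ with $g=\sum_\tau|f_\tau|^2$, decompose $\supp\wh g\subset B_{O(1)}(0)$ into dyadic shells $|\xi|\sim s$, and note that on each shell only $O(1)$ angular clusters of aperture $\sim\de/s$ contribute at a given frequency point (this is exactly the paper's covering of the annulus $\bA_s$ by tubes $\theta_s$ of dimensions $\de\times\cdots\times\de\times s$ and the associated clusters $\Tau_{\theta_s}$; a small inaccuracy: the footprint of $\wh{|f_\tau|^2}$ on the shell is a $\de\times\cdots\times\de\times s$ tube segment, not a $\de$-ball, but this does not affect the clustering). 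The genuine gap is in how you bound each shell. You discard the shell projection inside each cluster and invoke a rescaled version of the theorem itself, hoping an induction on the number of scales "telescopes" into one $\log\de^{-1}$. This does not close: at the shell $s\sim\de$ the cluster has angular radius $\sim 1$, i.e.\ it contains \emph{all} the caps, so the "rescaled subproblem" is verbatim the statement being proved (circular); and for intermediate $s$ the scheme yields a recursion of the shape $A(\de)^4\lesssim\sum_{\de\le s\le 1}A(s)^4+(\text{low term})$ for the best constant $A$, whose solutions grow like a power of $\de^{-1}$, not like a single logarithm. Moreover the piece you single out as the main obstacle, the low part with frequency in $B(0,\de)$, is in fact not special in the paper: it is the $s=\de$ case of the same uniform argument, and your proposed "feed the local average back at the reciprocal scale" is left unexecuted by your own account.

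The missing idea is the mechanism that makes each shell cost only $O(1)\|f\|_4^4$ \emph{without any recursion}, by exploiting the shell projection itself. In the paper one keeps the cutoff $\eta_{\theta_s}$ at $\theta_s$, whose inverse transform is essentially $|\theta_s^*|^{-1}\Id_{\theta_s^*}$ with $\theta_s^*$ the dual slab of dimensions $\de^{-1}\times\cdots\times\de^{-1}\times s^{-1}$; tiling $\R^n$ by translates $U\parallel\theta_s^*$, the projected cluster sum is pointwise dominated on $U$ by the average $|U|^{-1}\int_{2U}\sum_{\tau\in\Tau_{\theta_s}}|f_\tau|^2$. Local $L^2$ orthogonality (Lemma \ref{weightedL2}) upgrades this to $|U|^{-1}\int\chi_U|f_\vth|^2$, where $\vth$ is the single sector of angular width $\sim\de/s$ containing all $\tau\in\Tau_{\theta_s}$, and Cauchy--Schwarz on $U$ converts $|U|^{-1}\big(\int\chi_U|f_\vth|^2\big)^2$ into $\int\chi_U^2|f_\vth|^4$; summing over $U$, over $\theta_s$, and using $\sum_{\vth}\|f_\vth\|_4^4\lesssim\|f\|_4^4$ (interpolation of $\ell^2L^2$ and $\ell^\infty L^\infty$, Lemma \ref{interpolation}) gives a bound uniform in $s$, including $s=\de$. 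The only logarithmic loss then comes from the $\lesssim\log\de^{-1}$ values of $s$ (pigeonholing, or summing as you intended), which yields $\|g\|_2^2\lesssim\log\de^{-1}\,\|f\|_4^4$ and hence the $(\log\de^{-1})^{1/4}$ in \eqref{localineq}. Until you replace your induction-on-scales step by an argument of this kind (or some other non-circular per-shell bound), the proposal does not constitute a proof.
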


Let us first discuss some geometry.
For each $\De_j$, we consider the corresponding tube $\tau_j$ defined as follows
$$ \tau_j:=\{\xi\in\R^n: 100\le|\xi|\le 101,~ \xi/|\xi|\in\De_j\}. $$
Since $\supp\wh f\subset \{\xi:100\le|\xi|\le 101\}$, we have 
$$T_j f=\bigg( \Id^*_{\De_j}(\xi/|\xi|) \rho(|\xi|) \wh f(\xi) \bigg)^\vee,$$
where $\rho(r)$ is a smooth function supported in $r\in [99,102]$ and $=1$ for $r\in[100,101]$.
Now we define 
$$\psi_{\tau_j}(\xi):=\Id^*_{\De_j}(\xi/|\xi|) \rho(|\xi|), $$
so $\psi_{\tau_j}$ is a smooth bump function supported in $\tau_j$, and $T_j f=\big( \psi_{\tau_j} \wh f\ \big)^\vee$.
Denote all the $\de\times\cdots\times\de\times1$ tubes obtained above by $\Tau=\{\tau_j\}$. By definition we know that the tubes are finitely overlapping.
Now, let us forget about $T_j f$ and use the new notation
$f_\tau:=\big( \psi_\tau \wh f\ \big)^\vee$ for $\tau\in\Tau$.
\eqref{localineq} is equivalent to
\begin{equation}
    \bigg\|\big(\sum_{\tau\in\Tau}|f_\tau|^2\big)^{1/2}\bigg\|_{L^4(\R^n)}\lesssim (\log \de^{-1})^{1/4} \|f\|_{L^4(\R^n)}
\end{equation}

For each $\tau\in\Tau$, the Fourier transform of $|f_\tau|^2$ has support in $\tau-\tau\subset 5\tau_0$ (here $\tau_0$ is the translation of $\tau$ to the origin). Hence the Fourier transform of $\sum_{\tau\in\Tau}|f_\tau|^2$ is supported in $\cup_{\tau\in\Tau} 5\tau_0\subset B_{10}(0)$. Next, we will partition the frequency ball $B_{10}(0)$ into tubes and analyze the contribution of $\sum_{\tau\in\Tau}|f_\tau|^2$ on each of the partitions.

For any dyadic number $s$ with $\de\le s\le 10$, consider a partition of the annulus $\bA_s:=\{\xi\in\R^n: \frac{s}{2}\le |\xi|\le s\}$ into tubes of dimensions $\de\times\cdots\times \de\times s$ whose central lines pass through the origin.
More precisely, we choose a set of maximal $\de s^{-1}$-separated points on $\ZS^{n-1}$, denoted by $\{\om_s\}$.
For each $\om_s$, define 
$$\theta_s=\{\xi: \frac{s}{2}\le|\xi|\le s,~ \xi/|\xi|\in \ZS^{n-1}\cap B_{\de s^{-1}}(\om_s)\}.$$
Denote the set of these tubes by $\Theta_s=\{\theta_s\}$. One can see that $\Theta_s$ forms a finitely overlapping covering of $\bA_s$. 
Particularly, when $s=\de$, we just define $\Theta_{\de}$ to consist of a single element $\theta_{\de}=\{\xi\in\R^n: |\xi|\le \de\}$
which is a ball of radius $\de$ centered at the origin.

Next, we will use $\theta_s\in\Theta_s$ to give a partition of $\Tau$. For each $\theta_s\in \Theta_s$, define
$$ \Tau_{\theta_s}:=\{\tau\in\Tau: \theta_s\cap 10\tau_0\neq \emptyset\}. $$
By some elementary geometries, we can see that $\{\Tau_{\theta_s}\}_{\theta_s\in\Theta_s}$ form a finitely overlapping cover of $\Tau$.

For each $\theta_s$, we define $\theta_s^*$ to be the dual slab of $\theta_s$. More precisely, $\theta_s^*$ is a slab centered at the origin with dimensions $\de^{-1}\times\cdots\times \de^{-1}\times s^{-1}$ and with the normal direction the same as the direction of $\theta_s$.

Now let us start the proof.
\begin{proof}
Denote the square function by $g=\sum_{\tau\in\Tau}|f_\tau|^2$. If $\xi\in\bA_s$, then
\begin{equation}
    |\wh g(\xi)|^2\lesssim \sum_{\theta_s\in\Theta_s} |\sum_{\tau\in\Tau_{\theta_s}}(|f_\tau|^2)^\wedge(\xi)|^2.
\end{equation}
For each $\theta_s$, we choose $\eta_{\theta_s}$ to be a smooth cutoff function at $\theta_s$ so that $\eta_{\theta_s}\gtrsim 1$ in $\theta_s$ and $|\eta_{\theta_s}^{\vee}(x)|\lesssim \frac{1}{|\theta_s^*|}1_{\theta_s^*}(x)$. Then, we have
\begin{equation}
    |\wh g(\xi)|^2\lesssim \sum_{\theta_s\in\Theta_s} |\eta_{\theta_s}(\xi)\sum_{\tau\in\Tau_{\theta_s}}(|f_\tau|^2)^\wedge(\xi)|^2,
\end{equation}
for $\xi\in\bA_s$.

By Plancherel, we get
\begin{equation}
    \int| g|^2\lesssim \int\sum_{\de\le s\le 10}\sum_{\theta_s\in\Theta_s} |\eta_{\theta_s}^{\vee}*\sum_{\tau\in\Tau_{\theta_s}}|f_\tau|^2|^2.
\end{equation}
Note that $|\eta_{\theta_s}^{\vee}(x)|\lesssim \frac{1}{|\theta_s^*|}1_{\theta_s^*}(x)$. This suggests us to tile $\R^n$ by translations of $\theta_s^*$. We denote this cover by $\{U: U\parallel\theta_s^*\}$. For $x\in U$, we have
$$ |\eta_{\theta_s}^{\vee}*\sum_{\tau\in\Tau_{\theta_s}}|f_\tau|^2|\lesssim |U|^{-1}\int \eta_U\sum_{\tau\in\Tau_{\theta_s}}|f_\tau|^2, $$
where $\eta_U(x)=\max_{y\in x+\theta_s^*}|\eta_{\theta_s}^\vee(x-y)|$ is a smooth bump function supported in $2U$.

Therefore, one has
\begin{equation}
    \int| g|^2\lesssim \sum_{\de\le s\le 10}\sum_{\theta_s\in\Theta_s}\sum_{U\parallel\theta_s^*}|U|^{-1}\big(\int_{2U}\sum_{\tau\in\Tau_{\theta_s}}|f_\tau|^2\big)^2.
\end{equation}
By dyadic pigeonholing, there exists an $s$ such that
\begin{equation}
    \int| g|^2\lesssim \log \de^{-1}\sum_{\theta_s\in\Theta_s}\sum_{U\parallel\theta_s^*}|U|^{-1}\big(\int_{2U}\sum_{\tau\in\Tau_{\theta_s}}|f_\tau|^2\big)^2.
\end{equation}
We remark that this is the only place we lose a logarithmic factor $\log\de^{-1}$.

\medskip

Now we carefully analyze the integral $\int_{2U}\sum_{\tau\in\Tau_{\theta_s}}|f_\tau|^2$.
For simplicity, we just write $\theta$ for $\theta_s$ (recall $\theta$ is a tube of dimensions $\de\times\cdots \de\times s$).
For each $\theta\in\Theta_s$ of form $\theta=\{\xi: \frac{s}{2}\le|\xi|\le s, \xi/|\xi|\in \ZS^{n-1}\cap B_{\de s^{-1}}(\om)\},$
we define another tube
\begin{equation}
    \vartheta:=\{ \xi: 100\le|\xi|\le 101, \xi/|\xi|\in\ZS^{n-1}\cap B_{100\de s^{-1}}(\om) \}.
\end{equation}
This roughly says the radial projection of $\theta$ on $\ZS^{n-1}$ is contained in that of $\vartheta$.
From a simple geometric argument, we see that each $\tau\in\Tau_\theta$ satisfies $\tau\subset \vartheta$.
Let $\bth=\{\vartheta\}$ be the collection of these $\vartheta$. For each $\vth$, we choose a smooth bump function $\psi_\vth$ at $\vth$ and then define $f_\vth:=(\psi_\vth\wh f\ )^\vee$.

By local $L^2$-orthogonality (see Lemma \ref{weightedL2}), we have
\begin{equation}
    \int_{2U}\sum_{\tau\in\Tau_{\theta_s}}|f_\tau|^2\lesssim \int \chi_U |f_\vth|^2,
\end{equation}
where $\chi_U$ is morally a cutoff function at $U$ and decay rapidly outside $U$.
Therefore, by H\"older's inequality, we have
\begin{equation}
    \int| g|^2\lesssim \log \de^{-1}\sum_{\theta_s\in\Theta_s}\sum_{U\parallel\theta_s^*}\int (\chi_U)^2 |f_\vth|^4\lesssim \log \de^{-1}\sum_{\theta_s\in\Theta_s}\int |f_\vth|^4.
\end{equation}
Here we use
$$ \sum_{U\parallel\theta_s^*}(\chi_U)^2\lesssim 1. $$
It remains to prove 
\begin{equation}
    \sum_{\theta_s\in\Theta_s}\int |f_\vth|^4\lesssim \int |f|^4.
\end{equation}
This is just a result of interpolation between the following two inequalities (or see Lemma \ref{interpolation}):
\begin{equation}
    \sup_{\theta_s\in\Theta_s}\| f_\vth\|_{L^\infty}\lesssim \|f\|_{L^\infty}.
\end{equation}
\begin{equation}
    \sum_{\theta_s\in\Theta_s} \|f_\vth\|_{L^2}^2\lesssim \|f\|_{L^2}^2.
\end{equation}
\end{proof}

\section{Proof of the sharp-cutoff version}
\label{sec4}
Let us prove Theorem \ref{main-esti-2}.
Now each $\De_j$ is $\de$-regular, so by definition there is a $\de$-ball $B_j\subset \ZS^{n-1}$ such that $cB_j\subset \De_j\subset CB_j$. By the disjointness of $\{\De_j\}$, we see the $C\de$-balls $\{CB_j\}$ are finitely overlapping. 
Without loss of generality, we may assume they are disjoint.
(Actually, we can regroup these balls into $O(1)$ sets so that the $C\de$-balls in each set are disjoint. Then we prove the estimate for each set and sum them up together.) We can choose smooth bump function $\Id^*_{B_j}$ adapted to $CB_j$ so that $\Id^*_{B_j}=1$ on $\De_j$. For any function $h$, we define $T_j h:= (\Id^*_{B_j} \wh h)^\vee$. By the support condition, we see that $S_j f= S_j T_j f$.

By duality, there is a $g\in L^2$ with $\|g\|_2=1$ so that
\begin{equation}\label{plugback2}
    \Big\|\big(\sum_{j}|S_j f|^2\big)^{1/2}\Big\|_4^2=\int_{\R^n}\sum_{j}|S_j f|^2g=\sum_j\int_{\R^n}|S_j T_j f|^2g.
\end{equation}
Let us look at the integral $\int_{\R^n} |S_j h|^2 g$, where we will plug in $h=T_j f$ later. Recall that $\wh{S_j h}= \Id_{\si_j} \wh h$, where $\si_j=\{\xi: \xi/|\xi|\in\De_j\}$. We want to express $\Id_{\si_j}$ in another form. Note that $\De_j$ is a polyhedron on $\ZS^{n-1}$, so if we denote by $\cn_j=\{\vec n_j\}$ the normals of $\De_j$ pointing outward, then 
$$ \Id_{\si_j}=\prod_{\vec n_j\in\cn_j} \Id_{\{\xi\cdot \vec n_j<0\}}. $$
By the one-dimensional condition (see Definition \ref{1dim}), we see that $\cn:=\cup_j \cn_j$ is contained in $O(1)$ many great circles. Define the following maximal functions
\begin{align}
    &M_{\vec n} g(x):=\sup_{t>0}\frac{1}{2t}\int _{x+[-t,t]\vec n}|g|,\\ 
    M_{s,\vec n} g(x):= &M_{\vec n}\big(( M_{\vec n} |g|^s )^{1/s}\big), \ \ \
    M_s g(x):=\sup_{\vec n\in\cn}M_{s,\vec n} g.
\end{align}
Here $s>1$. We see that $M_s$ is actually a maximal operator associated to one-dimensional directions.

We may assume $\cn$ is contained in one great circle and by rotation we may assume $\cn$ lie in the $(\xi_1,\xi_2)$-plane, i.e., $\cn\subset \{\xi\in\ZS^{n-1}:\xi_1^2+\xi_2^2=1\}$. Denote $x=(x_1,x_2,x')$, where $x'\in\R^{n-2}$. We write
\begin{align}
    \int_{\R^n}|S_j h|^2 g=\int_{\R^{n-2}}\int_{\R^2}|S_j h(x_1,x_2,x')|^2 g(x_1,x_2,x')dx_1 dx_2 dx'\\
    =\int_{\R^{n-2}}\int_{\R^2}|K(x_1,x_2)* h(x_1,x_2,x')|^2 g(x_1,x_2,x')dx_1 dx_2 dx'.
\end{align}
Here $K(x_1,x_2)=(\prod_{\vec n_j\in\cn_j} \Id_{\{ \xi\cdot\vec n_j<0 \}})^\vee$ is some kernel depending only on $x_1,x_2$.

By iterating the weighted estimate of C\'ordoba-Fefferman \cite{Cordoba-Fefferman},
we have for each $s>1$ the following estimate:
\begin{align}
    \int_{\R^2}|K(x_1,x_2)* h(x_1,x_2,x')|^2 g(x_1,x_2,x')dx_1 dx_2\\
    \lesssim \int_{\R^2}|h|^2  M_{s,\vec n_{j,1}}\circ\cdots \circ M_{s,\vec n_{j,m_j}} |g|.
\end{align}
Here $\cn_j=\{\vec n_{j,1},\cdots, \vec n_{j,m_j}\}$.
We assume $m_j\le m$ which is a bounded number, so the above inequality is bounded by
\begin{equation}
    \int_{\R^2}|h|^2  M_{s}^{(m)}|g|,
\end{equation}
where $M_{s}^{(m)}$ is the composition of $M_s$ by $m$ times. We obtain
$$ \int_{\R^n}|S_j h|^2 g\lesssim \int_{\R^n}|h|^2M_s^{(m)}|g|. $$

Plugging back to \eqref{plugback2}, we have
\begin{align}
    \Big\|\big(\sum_{j}|S_j f|^2\big)^{1/2}\Big\|_4^2
\lesssim \int_{\R^n}\sum_j |T_j f|^2 M_s^{(m)}|g|\\    
    \leq\Big\|\big(\sum_j|T_j f|^2\big)^{1/2}\Big\|_4^2\|(M_s^{(m)}|g|)^{1/s}\|_2.
\end{align}
Since $M_{s}^{(m)}$ is essentially a maximal operator in the plane, we can use the two-dimensional maximal estimate (see for example in \cite{Katz-two-d-directional}) and choose $s$ very close to $1$ to get $\big\|M_s^{(m)}|g|\big\|_2\lesssim_\e \de^{-\e}\|g\|_2$. This gives
\begin{equation}
    \Big\|\big(\sum_{j}|S_j f|^2\big)^{1/2}\Big\|_4^2\lesssim_\e \de^{-\e} \Big\|\big(\sum_{j}|T_j f|^2\big)^{1/2}\Big\|_4^2.
\end{equation}
This boils down to the smooth version that we have already proved in Section \ref{sec3}.

\section{\texorpdfstring{$L^8$}{Lg} square function estimate for the cone}\label{sec5}

We prove Theorem \ref{L8thm} and Theorem \ref{globalL8} in this section. Via a global-to-local reduction that is similar to the one in Section 2, we see that Theorem \ref{globalL8} is a corollary of Theorem \ref{L8thm}. Hence we will focus on the proof of Theorem \ref{L8thm}. Let us begin with some elementary tools.

\subsection{Some elementary estimates}

Let $R\subset\R^3$ be a rectangle of dimensions $a_1\times a_2\times a_3$. We will use $R^*$ to denote the dual rectangle of $R$, namely $R^*$ is the rectangle centered at the origin of dimensions $a_1^{-1}\times a_2^{-1}\times a_3^{-1}$. Also we make the convention that if $R$ lies in the physical space $\R^3_x$ then $R^*$ lies in the frequency space $\R^3_\xi$ and vice versa.

Sometimes we will use the notation that a function $\vp$ is a \textit{smooth bump function adapted} to $R$. What follows is its precise definition.

\begin{definition}
Let $R\subset \R^3_\xi$ be a rectangle of dimensions $a_1\times a_2\times a_3$ and let $(\boldsymbol e_1,\boldsymbol e_2,\boldsymbol e_3)$ be the corresponding directions. Denote by $\xi_R$ the center of $R$ and write $\xi$ in the coordinate $(\boldsymbol e_1,\boldsymbol e_2,\boldsymbol e_3)$ as $(\xi_1,\xi_2,\xi_3)$.
We say $\vp$ is ``a smooth bump function adapted" to $R$, if $\supp \vp\subset 2R$, $\vp=1$ on $R$ and $\vp$ satisfies the following derivative estimate
\begin{equation}\label{adapt}
    |D^k_{\boldsymbol e_i}\vp(\xi_R+\xi)|\lesssim_k (1+\frac{|\xi_i|}{a_i})^{-k},
\end{equation}
for $i=1,2,3$ and any $k>0$. 
\end{definition}

Following the notation in the definition above, if $\vp$ is a smooth bump function adapted to $R$, then
\begin{equation}
    |\vp^\vee(x)|\lesssim_k \frac{1}{|R^*|} (1+a|x_1|+b|x_2|+c|x_3|)^{-k}. 
\end{equation}
This roughly says $|\vp^\vee|\approx \frac{1}{|R^*|}\Id_{R^*}$, which suggests us to define the following indicator function with rapidly decaying tail.
\begin{definition}
Let $R$ be a rectangle and $A:\R^3\rightarrow\R^3$ be the linear map such that $A([-1,1]^3)=R$. We define
\begin{equation}
    \chi_R(x):=(1+|A^{-1}x|)^{-10^9}.
\end{equation} 
We call $\chi_R$ the indicator function of $R$ with rapidly decaying tail.
\end{definition}

The definitions above also work in $\R^n$. 
Now let us state a weighted $L^2$-estimate. For its proof, see \cite{Cordoba} or Lemma 2.3 in \cite{gan2021new}.

\begin{lemma}\label{weightedL2}
Let $\{R\}$ be a set of finitely overlapping congruent rectangles in $\R^n_\xi$, and let $\{\vp_R(\xi)\}_R$ be the smooth bump functions adapted to them. Then
\begin{equation}
    \int_{\R^n}\sum_R |(\vp_R \wh f)^\vee|^2 g\lesssim \int_{\R^n}|f|^2 (\frac{1}{|R^*|}\chi_{R^*}*|g|). 
\end{equation}
\end{lemma}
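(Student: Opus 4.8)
The final statement to prove is Lemma~\ref{weightedL2}, a weighted $L^2$-orthogonality estimate. Let me sketch how I would prove it.

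\textbf{Proof proposal.}

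The plan is to expand the left-hand side using Plancherel and exploit the bounded overlap of the Fourier supports $\{2R\}$, combined with the pointwise decay of the kernels $\vp_R^\vee$. First I would write $(\vp_R\wh f)^\vee = \vp_R^\vee * f$, so that
\begin{equation}
\int_{\R^n}\sum_R |\vp_R^\vee * f|^2 g
= \sum_R \int_{\R^n}\Big|\int_{\R^n}\vp_R^\vee(y)f(x-y)\,dy\Big|^2 g(x)\,dx.
\end{equation}
The key pointwise input is that, since $\vp_R$ is adapted to $R$, one has $|\vp_R^\vee(y)|\lesssim_N |R^*|^{-1}\chi_{R^*}(y)^N$ for every $N$ (using the notation $\chi_{R^*}$ introduced just before the lemma, and picking $N$ large enough that $\int |R^*|^{-1}\chi_{R^*}(y)\,dy\lesssim 1$). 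By Cauchy--Schwarz in the $y$-integral, weighting by $|R^*|^{-1}\chi_{R^*}$, I get
\begin{equation}
|\vp_R^\vee * f(x)|^2 \lesssim \Big(\int \tfrac{1}{|R^*|}\chi_{R^*}(y)\,dy\Big)\Big(\int \tfrac{1}{|R^*|}\chi_{R^*}(y)\,|f(x-y)|^2\,dy\Big)\lesssim \big(\tfrac{1}{|R^*|}\chi_{R^*}*|f|^2\big)(x).
\end{equation}
Hence, after this step and Fubini, the left-hand side is bounded by $\sum_R\int |f|^2\,\big(\tfrac{1}{|R^*|}\chi_{R^*}*g\big)$ — but this is not yet the claimed bound, since the sum over $R$ is still outside.

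To recover the stated inequality I would instead avoid throwing away the Fourier support information. The better route: observe that the Fourier transform of $\sum_R |\vp_R^\vee*f|^2$ is supported in $\bigcup_R (2R - 2R)$, and more usefully, pair against $g$ directly. Write $\int \sum_R |\vp_R^\vee * f|^2 g = \sum_R \int (\vp_R^\vee * f)\overline{(\vp_R^\vee * f)}\, g$. Expand one copy of $\vp_R^\vee * f$ and use that $\widehat{\,|\vp_R^\vee*f|^2\,}$ is supported in $2R-2R$, a translate-to-origin of a dilate of $R^*$'s dual; then $\int |\vp_R^\vee*f|^2 g = \int |\vp_R^\vee*f|^2\,(\eta_R^\vee * g)$ for a suitable bump $\eta_R$ with $|\eta_R^\vee|\lesssim |R^*|^{-1}\chi_{R^*}$, since convolving $g$ with such a bump does not change the pairing against a function whose spectrum lies in the region where $\eta_R=1$. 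Combining this with the pointwise bound $|\vp_R^\vee*f|^2\lesssim \tfrac{1}{|R^*|}\chi_{R^*}*|f|^2$ from above and Fubini gives
\begin{equation}
\int\sum_R|\vp_R^\vee*f|^2 g \lesssim \sum_R\int \big(\tfrac{1}{|R^*|}\chi_{R^*}*|f|^2\big)\big(\tfrac{1}{|R^*|}\chi_{R^*}*|g|\big) = \sum_R \int |f|^2\,\big(\tfrac{1}{|R^*|}\chi_{R^*}*\tfrac{1}{|R^*|}\chi_{R^*}*|g|\big),
\end{equation}
and since $\tfrac{1}{|R^*|}\chi_{R^*}*\tfrac{1}{|R^*|}\chi_{R^*}\lesssim \tfrac{1}{|R^*|}\chi_{R^*}$ (the $\chi_{R^*}$ are comparable-scale rapidly decaying bumps on the same rectangle, all congruent), and finally the finite overlap of $\{R\}$ (hence of the spatial weights adapted to congruent $R^*$) lets me sum $\sum_R \tfrac{1}{|R^*|}\chi_{R^*}*|g|\lesssim \tfrac{1}{|R^*|}\chi_{R^*}*|g|$ — here I use that all $R$ are congruent so there is a single model weight — yielding the claim.

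\textbf{Main obstacle.} The delicate point is the step where I replace $g$ by $\eta_R^\vee * g$ inside the pairing: this requires carefully checking that $\widehat{|\vp_R^\vee*f|^2}$ is supported where the chosen $\eta_R$ equals $1$, and that $\eta_R$ can be taken adapted to a $O(1)$-dilate of $R^*$ with the clean kernel bound $|\eta_R^\vee|\lesssim |R^*|^{-1}\chi_{R^*}$. One must also be careful that the $\chi_{R^*}$-convolution estimates and the final summation over $R$ genuinely only use that the $R$ are \emph{congruent} and \emph{finitely overlapping} — these are exactly the hypotheses — so that the implicit constant depends only on $n$ and the overlap constant, not on the cardinality of $\{R\}$. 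Everything else is routine convolution bookkeeping with rapidly decreasing tails, and I would cite \cite{Cordoba} or Lemma 2.3 of \cite{gan2021new} for the standard packaging of these manipulations.
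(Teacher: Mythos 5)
Your proposal has a genuine gap at the very last step, the summation over $R$. After your pointwise bound $|\vp_R^\vee*f|^2\lesssim \frac{1}{|R^*|}\chi_{R^*}*|f|^2$ and the replacement of $g$ by $\eta_R^\vee*g$, each individual term is indeed controlled by $\int |f|^2\,\big(\frac{1}{|R^*|}\chi_{R^*}*|g|\big)$; but this quantity does not depend on $R$ at all, since $R^*$ and $\chi_{R^*}$ are centered at the origin and are the same for every one of the congruent rectangles. Hence the asserted inequality $\sum_R \frac{1}{|R^*|}\chi_{R^*}*|g|\lesssim \frac{1}{|R^*|}\chi_{R^*}*|g|$ is simply false: the left side equals $\#\{R\}$ times the right side, and no finite-overlap property of the frequency rectangles can repair this, because the pointwise Cauchy--Schwarz step has already destroyed all information about which portion of $\wh f$ lives in $R$. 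In other words, your ``better route'' collapses back to your first attempt (the smoothing of $g$ is correct but is a red herring), and the bound you actually obtain loses a factor of the cardinality of $\{R\}$. Note also that the paper does not prove this lemma itself; it refers to \cite{Cordoba} and to Lemma 2.3 of \cite{gan2021new}, whose argument has the structure described below.

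The missing idea is to retain the frequency localization of $f$ before applying Cauchy--Schwarz. Choose $\widetilde{\vp}_R$ adapted to a fixed dilate of $R$ with $\widetilde{\vp}_R=1$ on $\supp\vp_R$, set $f_R:=(\widetilde{\vp}_R\wh f\,)^\vee$, and write $(\vp_R\wh f\,)^\vee=\vp_R^\vee*f_R$. Your Cauchy--Schwarz step then gives $\int|(\vp_R\wh f\,)^\vee|^2\,|g|\lesssim \int |f_R|^2\,W$ with the single weight $W=\frac{1}{|R^*|}\chi_{R^*}*|g|$, and the whole content of the lemma becomes the weighted almost-orthogonality $\sum_R\int|f_R|^2\,W\lesssim\int|f|^2\,W$. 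This is the step where the finite overlap of the congruent rectangles is genuinely used: since $\frac{1}{|R^*|}\chi_{R^*}*\frac{1}{|R^*|}\chi_{R^*}\sim\frac{1}{|R^*|}\chi_{R^*}$, the weight $W$ is locally constant at scale $R^*$ and can be dominated by a weight whose Fourier transform is concentrated in a box of dimensions comparable to $R$ centered at the origin; expanding $\sum_R\int|f_R|^2W$ by Plancherel, each frequency then couples only to $O(1)$ many rectangles $R$, so the sum closes with a constant depending only on the dimension and the overlap constant, not on $\#\{R\}$. With that orthogonality step supplied, the rest of your convolution bookkeeping is fine.
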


There is another useful lemma.
\begin{lemma}\label{interpolation}
Let $\{R\}$ be a set of finitely overlapping rectangles in $\R^n_\xi$, and let $\{\vp_R(\xi)\}_R$ be the smooth bump functions adapted to them. Then
\begin{equation}
    \int_{\R^n}\sum_R |(\vp_R \wh f)^\vee|^p \lesssim_p \int_{\R^n}|f|^p, 
\end{equation}
for $2\le p\le \infty$.
\end{lemma}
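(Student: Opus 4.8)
\medskip
\noindent\textbf{Proof proposal.} The plan is to read the conclusion as an $\ell^p(L^p)$ bound for the linear operator $Tf:=\{(\vp_R\wh f)^\vee\}_R=:\{f_R\}_R$ and to obtain it by interpolating the two extreme exponents $p=2$ and $p=\infty$. Since $\int_{\R^n}\sum_R|f_R|^p=\sum_R\|f_R\|_{L^p}^p=\|Tf\|_{\ell^p(L^p(\R^n))}^p$, the lemma is equivalent to the statement that $T\colon L^p(\R^n)\to\ell^p(L^p(\R^n))$ is bounded with a constant depending only on $p$ and the overlap constant, for every $2\le p\le\infty$.

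First I would dispose of $p=\infty$. Since $\vp_R$ is a smooth bump function adapted to $R$, the decay estimate for $\vp_R^\vee$ recorded right after the definition of $\chi_R$ gives $|\vp_R^\vee|\lesssim|R^*|^{-1}\chi_{R^*}$, hence $\|\vp_R^\vee\|_{L^1(\R^n)}\lesssim 1$ uniformly in $R$. Writing $f_R=\vp_R^\vee*f$ and using Young's inequality yields $\sup_R\|f_R\|_{L^\infty}\lesssim\|f\|_{L^\infty}$, i.e.\ $T\colon L^\infty\to\ell^\infty(L^\infty)$ with norm $O(1)$. Next I would handle $p=2$: by Plancherel, $\sum_R\|f_R\|_{L^2}^2=\int_{\R^n}\big(\sum_R|\vp_R|^2\big)|\wh f|^2$, and since each $\vp_R$ is $O(1)$ and supported in $2R$, the finite-overlap hypothesis on $\{R\}$ (which also controls $\sum_R\1_{2R}$) gives $\sum_R|\vp_R|^2\lesssim 1$ pointwise, so $\sum_R\|f_R\|_{L^2}^2\lesssim\|\wh f\|_{L^2}^2=\|f\|_{L^2}^2$, i.e.\ $T\colon L^2\to\ell^2(L^2)$ with norm $O(1)$.

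Finally I would interpolate: for $2\le p\le\infty$ choose $\theta\in[0,1]$ with $\tfrac1p=\tfrac{1-\theta}{2}$, note that $[\ell^2(L^2),\ell^\infty(L^\infty)]_\theta=\ell^p(L^p)$, and apply the vector-valued (mixed-norm) Riesz--Thorin theorem to conclude $\|Tf\|_{\ell^p(L^p)}\lesssim_p\|f\|_{L^p}$, which is exactly the claimed inequality. I do not expect a genuine obstacle here; the only step deserving a word of care is the vector-valued interpolation, and the constant it produces is automatically independent of the cardinality of $\{R\}$ because the two endpoint norms are. As an alternative to interpolation one could start from the pointwise bound $\sum_R|f_R|^p\le(\sup_R|f_R|)^{p-2}\sum_R|f_R|^2$, but estimating $\sup_R|f_R|$ then brings in a maximal function, so I would prefer the interpolation route.
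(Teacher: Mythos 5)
Your proposal is correct and is essentially the paper's own argument: the paper proves this lemma exactly by interpolating the endpoint bounds $\sup_R\|f_R\|_{L^\infty}\lesssim\|f\|_{L^\infty}$ and $\sum_R\|f_R\|_{L^2}^2\lesssim\|f\|_{L^2}^2$ (stated explicitly in Section \ref{sec3}), which you establish by Young's inequality and Plancherel with finite overlap, respectively. Your use of vector-valued (mixed-norm) Riesz--Thorin to pass from these endpoints to $\ell^p(L^p)$ for $2\le p\le\infty$ is the standard way to make that one-line claim precise, so there is nothing further to add.
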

Lemma \ref{interpolation} follows from interpolation between $p=2$ and $p=\infty$. We also have the local version of Lemma \ref{interpolation}.

\begin{lemma}\label{localinterp}
Let $\{R\}$ be a set of finitely overlapping congruent rectangles in $\R^n_\xi$, and let $\{\vp_R(\xi)\}_R$ be the smooth bump functions adapted to them. If $U\subset \R^n_x$ is a rectangle whose translation to the origin contains all the dual rectangles $R^*$, then we have the following estimate:
\begin{equation}
    \int \chi_{U}\sum_R |(\vp_R \wh f)^\vee|^p \lesssim_p \int \chi_{U}|f|^p, 
\end{equation}
for $2\le p\le \infty$.
\end{lemma}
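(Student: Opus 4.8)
The plan is to deduce Lemma~\ref{localinterp} by interpolating a weighted $L^2$ estimate against a trivial $L^\infty$ estimate, following the proof of Lemma~\ref{interpolation} but carrying the weight $\chi_U$ through the argument. Write $T_Rf:=(\vp_R\wh f\,)^\vee$, so that $T\colon f\mapsto (T_Rf)_R$ is a linear operator from functions on $\R^n_x$ into $\ell^p(L^p(\chi_U\,dx))$; the latter is itself an $L^p$ space, namely $L^p$ of the product of counting measure on $\{R\}$ with $\chi_U\,dx$. The assertion of the lemma is exactly that $T$ is bounded from $L^p(\chi_U\,dx)$ to $\ell^p(L^p(\chi_U\,dx))$ for $2\le p\le\infty$.

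For the endpoint $p=\infty$: since $\vp_R$ is a smooth bump adapted to $R$, a change of variables in the estimate for $\vp_R^\vee$ recorded just before Lemma~\ref{weightedL2} gives $\|\vp_R^\vee\|_{L^1}\lesssim 1$ uniformly in $R$, hence $\|T_Rf\|_{L^\infty}=\|\vp_R^\vee\ast f\|_{L^\infty}\lesssim\|f\|_{L^\infty}$. Because $\chi_U>0$ everywhere we have $L^\infty(\chi_U\,dx)=L^\infty(dx)$, so this is precisely the $p=\infty$ boundedness of $T$. For the endpoint $p=2$ I would apply Lemma~\ref{weightedL2} with $g=\chi_U$, obtaining
\begin{equation}
    \int_{\R^n}\sum_R|T_Rf|^2\,\chi_U\ \lesssim\ \int_{\R^n}|f|^2\Big(\frac{1}{|R^*|}\chi_{R^*}\ast\chi_U\Big),
\end{equation}
and it then remains to check $\frac{1}{|R^*|}\chi_{R^*}\ast\chi_U\lesssim\chi_U$. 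This is where the hypothesis is used: writing $U_0$ for the translate of $U$ to the origin, we are given $R^*\subset U_0$, so $\frac{1}{|R^*|}\chi_{R^*}$ is an $L^1$-normalized bump supported (up to a rapidly decaying tail) on a set no larger than $U_0$ in every direction. Convolving such a kernel against $\chi_U$ therefore only spreads $\chi_U$ out by an $O(1)$ dilation, and since dilating a rapidly decaying indicator $\chi_U$ by a bounded factor changes it by at most a bounded factor, the claimed pointwise bound follows; this gives the $p=2$ boundedness of $T$.

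With both endpoints in hand I would finish by complex interpolation (Riesz--Thorin) applied to the single linear operator $T$: the domain measure $\chi_U\,dx$ and the target measure are the same at both endpoints, and the domain and target exponents agree there (equal to $2$ and to $\infty$ respectively), so $T$ is bounded from $L^p(\chi_U\,dx)$ to $\ell^p(L^p(\chi_U\,dx))$ for every $2\le p\le\infty$. Unwinding the definition of these spaces, this says $\big(\sum_R\int|(\vp_R\wh f)^\vee|^p\chi_U\big)^{1/p}\lesssim_p\big(\int|f|^p\chi_U\big)^{1/p}$, which is exactly the desired estimate.

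The only point needing genuine care is the kernel bound $\frac{1}{|R^*|}\chi_{R^*}\ast\chi_U\lesssim\chi_U$; the rest is a routine adaptation of the unweighted Lemma~\ref{interpolation}. I do not expect a real obstacle there: the containment $R^*\subset U_0$ in the hypothesis is designed precisely so that this convolution cannot spread $\chi_U$ past a bounded dilate of itself, and the generous decay exponent built into the definition of $\chi_U$ absorbs the small loss in decay rate incurred by the convolution. (Alternatively, one could split $f=f\1_{CU}+f\1_{(CU)^c}$, bound the first piece by the global Lemma~\ref{interpolation} and the second by the rapid decay of the kernels $\vp_R^\vee$ away from $R^*\subset U_0$; but the weighted-interpolation route above is cleaner.)
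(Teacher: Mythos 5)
Your proposal is correct and follows the same skeleton as the paper: both endpoints $p=2$ and $p=\infty$, the $p=\infty$ case being trivial, the $p=2$ case resting on Lemma~\ref{weightedL2}, and then interpolation. The one place you diverge is inside the $p=2$ endpoint. The paper first partitions each $R$ into sub-rectangles $\om$ comparable to $U^*$, uses local orthogonality on $U$ to pass from $\sum_R|(\vp_R\wh f)^\vee|^2$ to $\sum_\om|(\vp_\om\wh f)^\vee|^2$, and only then applies Lemma~\ref{weightedL2}; the payoff is that the resulting kernel is $\frac{1}{|\om^*|}\chi_{\om^*}\sim\frac{1}{|U|}\chi_U$, so the needed weight inequality is the clean self-convolution fact $\frac{1}{|U|}\chi_U*\chi_U\lesssim\chi_U$. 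You instead apply Lemma~\ref{weightedL2} directly to the family $\{R\}$ (legitimate, since the $R$'s are congruent and finitely overlapping) and must verify $\frac{1}{|R^*|}\chi_{R^*}*\chi_U\lesssim\chi_U$, where the kernel is much narrower than the weight but, by hypothesis, $R^*$ sits inside the translate of $U$ to the origin. That inequality is true and your use of the hypothesis is exactly right, but it deserves one more line than "spreads $\chi_U$ by an $O(1)$ dilation": splitting into the region where $y$ is well inside the $U$-distance from the origin compared to $x$ (where $\chi_U(x-y)\lesssim\chi_U(x)$ and the $L^1$-normalized kernel integrates to $O(1)$) and its complement (where the containment $R^*\subset U_0$ forces the kernel itself to be $\lesssim\chi_U(x)$, keeping a fraction of its decay for integrability) gives the bound, at the harmless cost of an $O(n)$ loss in the enormous decay exponent $10^9$ of $\chi_U$ — the same level of rigor at which the paper asserts its own convolution fact. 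So your route trades the paper's partition/local-orthogonality step for this slightly less standard (but still routine) weight estimate; both are valid, and yours is marginally shorter.
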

Again, the proof is by interpolation between $p=2$ and $p=\infty$. The case $p=\infty$ is easy, let us focus on $p=2$. We may assume $U$ is centered at the origin. Since $R^*\subset U$, we can partition each $R$ into smaller rectangles that are comparable to $U^*$. Denote the set of all the smaller rectangles coming from the partition by $\{\om\}$. By local orthogonality, we have
$$ \int \chi_{U}\sum_R |(\vp_R \wh f)^\vee|^2\lesssim \int \chi_{U}\sum_\om |(\vp_\om \wh f)^\vee|^2, $$
where $\vp_\om$ are smooth bump functions adapted to $\om$.
Together with Lemma \ref{weightedL2} and the fact that $\frac{1}{|U|}\chi_{U}*\chi_{U}\lesssim \chi_{U}$, we prove the result.

\begin{remark}

\rm 
From the point of view of the so-called ``locally constant property",
the lemmas above are obvious, but we still state them carefully for rigorousness.
\end{remark}

\subsection{The cutoff replacing property} \label{replace}
Let $R, R'\subset \R^3_\xi$ be two rectangles and $\vp_R,\vp_{R'}$ be smooth bump functions adapted to them. If $\supp \wh f\subset R\cap R'$, then $(\vp_R \wh f)^{\vee}=(\vp_{R'} \wh f)^{\vee}$. In this case, we replace the cutoff $\vp_R$ by $\vp_{R'}$, so we call it  \textit{the cutoff replacing property}.

When $\supp \wh f$ is not contained in $R\cap R'$, we can still have the cutoff replacing property by choosing $\vp_R, \vp_{R'}$ carefully. In the following, we discuss the case that we need in the paper. Let $R=[-a,a]\times [-b,b]\times [-c,c], R'=[-a,a]\times [-b,b]\times [-c',c']$ be two rectangles with $a>b>c'>c$ in the frequency space $\R^3_\xi$. Let $f$ be a function in $\R^3_x$ so that $\supp \wh f\subset \R^3_\xi\setminus \{|\xi_1|\le a, |\xi_2|\le b, |\xi_3|>c \}$. We see that $\supp \wh f$ is not contained in $R\cap R'$, but we can still construct the smooth bump functions to satisfy the property.

Choose $\vp(\xi_1,\xi_2)$ to be a smooth bump function adapted to $[-a,a]\times [-b,b]$. Choose $\rho(\xi_3)$ to be a smooth bump function adapted to $[-c,c]$, then $\rho(\frac{c}{c'}\xi_3)$ is a smooth bump function adapted to $[-c',c']$. If we set $\Id^*_R= \vp(\xi_1,\xi_2)\rho(\xi_3)$ and $\Id^*_{R'}=\vp(\xi_1,\xi_2)\rho(\frac{c}{c'}\xi_3)$, we see that $\Id^*_R, \Id^*_{R'}$ are adapted to $R, R'$, and $\Id^*_{R}=\Id^*_{R'}$ on $\R^3_\xi\setminus \{|\xi_1|\le a, |\xi_2|\le b, |\xi_3|>c \}$ which contains $\supp \wh f$. So, we have
$(\Id^*_R\wh f)^\vee=(\Id^*_{R'}\wh f)^\vee$.

Let us discuss how it works in our proof. Let $\tau$ be a $\de\times\de^{1/2}\times 1$-plank in $N_\de(\Ga)$ and let $\om$ be a $\ga^{-1} \de\times\de^{1/2}\times 1$-plank which is the $\ga^{-1}$-dilation of $\tau$ in the shortest direction (here $\ga<1$). Our function $f$ satisfies $\supp \wh f\subset N_\de(\Ga)$, so it also satisfies a similar condition discussed above after rotation. So, we can find $\Id^*_\tau$ adapted to $\tau$ and $\Id^*_\om$ adapted to $\om$ such that $(\Id^*_\tau\wh f)^\vee=(\Id^*_\om \wh f)^\vee$.  

\begin{remark}

\rm

The reason we want to change the cutoff is that we want to apply Lemma \ref{weightedL2} and Lemma \ref{localinterp}.
\end{remark}

\subsection{A general version of square function}
We start the proof of Theorem \ref{L8thm}.
Recall that $\Ga=\{ \xi\in\R^3:\xi_1^2+\xi_2^2=\xi_3^2, 1/2\le |\xi_3|\le 1 \}$, and $N_\de (\Ga)$ is its $\de$-neighborhood. There is a canonical covering of $N_\de(\Ga)$ using finitely overlapping planks $\tau$ of dimensions $\sim \de\times \de^{1/2}\times 1$, denoted by $\Tau=\{\tau\}$. For each $\tau\in\Tau$, choose a smooth bump function $\Id^*_\tau$ adapted to $\tau$. Define $f_\tau:=(\Id^*_\tau \wh f\ )^\vee$ as usual.

To prove the theorem, we need to state the estimate in a more technical way. Fix a dyadic parameter $\de^{1/2}\le \ga\le 1$. For each $\tau\in\Tau$, we partition it into planks of dimensions $\de\times \de^{1/2}\times \ga$ along the longest side of $\tau$ (see Figure \ref{finerpartition}). Denote the collection of these sub-planks of $\tau$ by $\Th_\ga(\tau)$. For each $\theta\in\Th_\ga(\tau)$, there is a smooth cutoff function $\Id^*_\theta$ adapted to $\theta$ and meanwhile we have $\Id^*_\tau=\sum_{\theta\in\Th_\ga(\tau)}\Id^*_\theta$.
We define $f_\theta=(\Id^*_\theta \wh f\ )^\vee$, then $f_\tau=\sum_{\theta\in\Th_\ga(\tau)}f_\theta$.
Set $\Theta_\ga=\cup_{\tau\in\Tau}\Th_\ga(\tau)$ be the set of all these planks.

\begin{figure}
\begin{tikzpicture}

\draw[color=blue, thick] (-9.6,9.16) arc (120:90:10);
\draw[color=blue, thick] (-10,8.66) arc (120:90:10);
\draw[color=blue, thick, dotted] (-9.5,7.794) arc (120:90:9);
\draw[color=blue, thick, dotted] (-9,6.96) arc (120:90:8);
\draw[color=blue, thick] (-8.5,6.06) arc (120:90:7);

\draw[color=blue,rotate around={120:(-5,0)}] (2,0) -- (5,0);
\draw[color=blue,rotate around={110:(-5,0)}] (2,0) -- (5,0);
\draw[color=blue,rotate around={100:(-5,0)}] (2,0) -- (5,0);
\draw[color=blue,rotate around={90:(-5,0)}] (2,0) -- (5,0);
\draw[color=blue,rotate around={87.8:(-5,0)}] (2.3,0) -- (5.5,0);

\draw[color=blue, thick] (-9.6,9.16) -> (-10,8.66);
\draw[color=blue, thick] (-8,9.9) -> (-8.42,9.397);
\draw[color=blue, thick] (-6.3,10.35) -> (-6.7365,9.848);
\draw[color=blue, thick] (-4.6,10.5) -> (-5,10);

\draw[color=blue, thick, dotted] (-4.63,9.45) -> (-5,9);
\draw[color=blue, thick, dotted] (-4.67,8.4) -> (-5,8);
\draw[color=blue, thick] (-4.72,7.35) -> (-5,7);

\node at (-6,9.5) {$\theta$};
\node at (-6,11) {$\tau$};
\node at (-2,9) {$\theta\sim\de\times\de^{1/2}\times\ga$};

\end{tikzpicture}
\caption{Finer partition}
\label{finerpartition}
\end{figure}
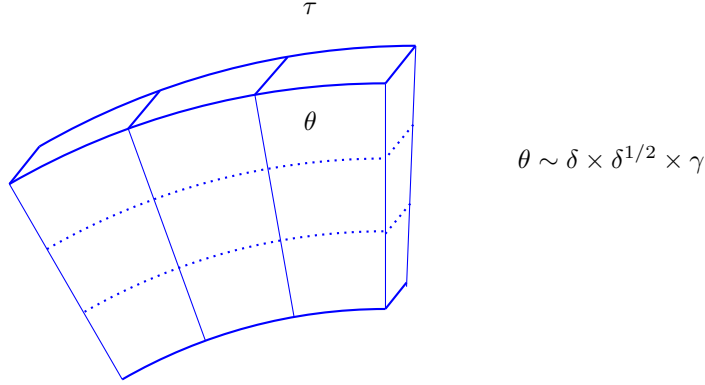

We will prove the following general version of the square function estimate.
\begin{theorem}
Assuming $\wh f\subset N_\de(\Ga)$, then for any $\e>0$ we have
\begin{equation}\label{genL8}
     \bigg\| (\sum_{\theta\in\Th_\ga}|f_\theta|^2)^{1/2} \bigg\|_{L^8(\R^3)}\le C_\e (\ga \de^{-1})^\e \|f\|_{L^8(\R^3)}.
\end{equation}
\end{theorem}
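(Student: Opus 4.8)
The plan is to prove \eqref{genL8} by running a broad/narrow (Bourgain--Guth) induction on scales, combined with the sharp bilinear/trilinear restriction theory for the cone and the high-low method that powers the $L^4$ reverse square function estimate \eqref{reverse}. First I would set up the induction quantity: let $Q(\ga,\de)$ be the best constant in \eqref{genL8} for the given pair of scales, and aim to show $Q(\ga,\de)\lesssim_\e (\ga\de^{-1})^\e$. The base case $\ga\sim\de^{1/2}$ (so $\Th_\ga=\Tau$) is the actual content; for intermediate $\ga$ one wants a recursion that gains a small power each time the angular scale is refined. It is worth recording at the outset two cheap inputs that the excerpt already grants: (i) the $L^4$ square function estimate for $\de\times\de^{1/2}$-planks follows from Theorem~\ref{main-thm} (after the global-to-local reduction, the planks $\tau$ are essentially $\de^{1/2}$-caps in direction, fitting the $\de$-ball framework up to harmless rescaling), and (ii) the $L^\infty$ bound $\sum_\theta|f_\theta|^2 \lesssim \|f\|_\infty^2$ type estimates via Lemma~\ref{interpolation}. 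Interpolating the $L^4$ bound against a favorable $L^{\infty}$-ish or $L^{12}$ bound would already give something past $p=6$ if one feeds in trilinear restriction, but to reach the sharp $p=8$ one must exploit planarity of the cone more carefully.

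The core mechanism I would use is the high-low decomposition of the square function $g=\sum_{\theta}|f_\theta|^2$, exactly as in Section~\ref{sec3} but now adapted to the two-parameter geometry of the cone. The Fourier support of $|f_\theta|^2$ lies in $\theta-\theta$, a slab of dimensions roughly $\de\times\de^{1/2}\times\ga$ through the origin, and as $\theta$ ranges over $\Th_\ga$ these slabs are organized along the light cone's dual (a one-parameter family of directions, since the cone is a ruled/``one-dimensional'' surface in the sense relevant here). Thus after a dyadic pigeonholing in a radial/annular parameter $s$ one localizes $\wh g$ to a single family $\Theta_s$ of tubes, losing only $\log(\ga\de^{-1})$, and then by Lemma~\ref{weightedL2} and the cutoff replacing property of Section~\ref{replace} one bounds $\int_{2U}\sum_{\theta\in\Th_\ga,\ \theta\subset\vth}|f_\theta|^2$ by $\int\chi_U|f_\vth|^2$ for a fattened plank $\vth$. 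Iterating this passes from scale $\ga$ to a coarser scale, say $\ga'=\ga^{1/2}$ or $2\ga$, and the recursion closes with only a logarithmic (hence $\e$-absorbable) loss per step --- PROVIDED one can control the "high part," where the slabs genuinely overlap. That control is where the sharp restriction theory enters: on the high-frequency piece one is looking at $\sum_\theta|f_\theta|^2$ with $\wh{}$ supported away from the origin at scale $\gtrsim$ some $r$, and there the relevant bilinear estimate $\|f_{\theta_1}f_{\theta_2}\|_{L^4}\lesssim \cdots$ for transverse cone planks, squared and summed, feeds an $L^8$ gain. Concretely I would cut the collection $\Th_\ga$ into $O(1)$ transverse sub-collections plus a ``non-transverse'' (narrow) part; on the transverse part apply bilinear cone restriction (Wolff/Tao, or the Guth--Wang--Zhang input \eqref{reverse} in its bilinearized form) to win the full $L^8$; on the narrow part, the planks are clustered into $O(1)$ sub-cones, rescale each sub-cone to the full cone at a coarser angular scale, and invoke the induction hypothesis.

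In bookkeeping the recursion, the key estimate I would isolate and prove as a lemma is a \emph{pruning/parabolic-rescaling} step: if all $\theta\in\Th_\ga$ contributing to a given spatial cube lie within an angular sector of size $\sigma$, then a linear change of variables maps that sector's portion of $N_\de(\Ga)$ to (a piece of) $N_{\de/\sigma^2}(\Ga)$ with the $\de\times\de^{1/2}\times\ga$ planks going to $(\de/\sigma^2)\times(\de/\sigma^2)^{1/2}\times(\ga/\sigma)$ planks, so that $Q(\ga,\de)\le Q(\ga/\sigma,\de/\sigma^2)$ up to constants; chaining these with the broad term gives $Q(\ga,\de)\lesssim (\log)^{O(1)}\cdot(\text{broad constant})$, and since the broad constant is $O_\e(1)$ by sharp bilinear restriction and the number of rescaling steps is $O(\log\log)$ or $O(\e^{-1})$, the total is $\lesssim_\e(\ga\de^{-1})^\e$. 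I expect the main obstacle to be making the broad/narrow dichotomy genuinely \emph{two-parameter}: the cone planks $\de\times\de^{1/2}\times1$ have two distinct transversality scales (the $\de^{1/2}$ angular direction along the cone and the $\de$ transverse direction), so "transverse" must be quantified at the right scale, and one must check that the bilinear cone estimate one invokes is the sharp $L^4$ one (not $L^3$ or a lossy version) in order to land exactly at $p=8$ rather than $p=6$. A secondary technical point is that Lemma~\ref{weightedL2} and Lemma~\ref{localinterp} require \emph{congruent} rectangles, so the cutoff replacing property of Section~\ref{replace} must be applied at each scale to replace the genuine (anisotropic, location-dependent) cone planks by the translation-congruent models before orthogonality is invoked; keeping this consistent through the rescaling is the kind of routine-but-delicate bookkeeping that the actual proof will have to carry out in full.
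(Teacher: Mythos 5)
There is a genuine gap, and it sits exactly where the hard part of the theorem lives: the treatment of the high-frequency piece. Your plan is to run a Bourgain--Guth broad/narrow dichotomy and to ``win the full $L^8$'' on the broad part from bilinear (or bilinearized) cone restriction. Two problems. First, the broad/narrow mechanism is a pointwise dichotomy for the \emph{sum} $|\sum_\theta f_\theta|$; it does not apply to the positive quantity $\sum_{\theta\in\Th_\ga}|f_\theta|^2$, which is never dominated by a few transverse products, so splitting $\Th_\ga$ into ``transverse sub-collections plus a narrow part'' does not produce a bilinear object to which restriction estimates could be applied. Second, even if one could bilinearize, multilinear restriction inputs are quantitatively insufficient at $p=8$: as remarked after Theorem \ref{L8thm}, trilinear cone restriction only yields the exponent $p=6$, and the sharp example in the Appendix shows $p=8$ is critical, so no amount of routine bookkeeping around a bilinear broad estimate can close the gap to $8$. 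Relatedly, your proposed recursion $Q(\ga,\de)\le Q(\ga/\sigma,\de/\sigma^2)$ via Lorentz rescaling of sectors, and your identification of $\ga\sim\de^{1/2}$ as ``the actual content'' (note $\Th_{\de^{1/2}}\neq\Tau$; the theorem is the case $\ga=1$, while $\ga=\de^{1/2}$ is the easy end, handled by Rubio de Francia for congruent cubes), indicate that the induction scheme you set up is not the one that carries the estimate.

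The missing idea in the actual proof is a \emph{second-level} use of the cone structure: one inducts on $\ga$, forms $g_\tau=\sum_{\theta\in\Th(\tau)}|f_\theta|^2$, and performs the high-low splitting on $\wh g_\tau$ (supported in a $\de\times\de^{1/2}\times\ga$ plank through the origin). The low parts are, by the locally constant heuristic, the square function at the finer scale $K^{-1}\ga$, absorbed by the induction hypothesis. The high parts $g_{\tau,high}$ have Fourier support in a neighborhood of the \emph{smaller cone} $\Ga_\ga$, and it is to the functions $h_\om=\sum_{\tau\in\Tau(\om)}g_{\tau,high}$ on this rescaled cone --- not to $f$ itself --- that the Guth--Wang--Zhang reverse square function estimate \eqref{reverse} is applied, followed by the wave-envelope (Kakeya-type) estimate of Lemma \ref{kakeyalem} and the local orthogonality claim \eqref{L4onU}, which convert $\int(\sum_\om|h_\om|^2)^2$ into $\int|f|^8$ via Lemmas \ref{weightedL2}, \ref{localinterp} and \ref{interpolation}. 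Your proposal does cite \eqref{reverse}, but only as a stand-in for a bilinear restriction input on pieces of $f$; without the step of recognizing that the squared sub-square-functions themselves live on a cone and feeding \emph{them} into \eqref{reverse} and the wave-envelope lemma, the high term is not controlled at $L^8$, and the argument does not close.
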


\begin{proof}
We prove by induction on $\ga$. The base case is when $\ga=\de^{1/2}$. For each $\theta\in\Theta_{\de^{1/2}}$, there is a cube $Q_\theta$ of side length $\de^{1/2}$ such that $\theta\subset Q_\theta$. By the cutoff replacing property in Section \ref{replace}, we can choose a smooth bump function $\Id^*_{Q_\theta}$ adapted to $Q_\theta$ so that $\Id^*_\theta \wh f=\Id^*_{Q_\theta} \wh f$. Via the Littlewood-Paley theory for congruent cubes (see \cite{Rubio} or \cite{Lacey-LP}), we obtain
\begin{equation}
     \bigg\| (\sum_{\theta\in\Th_{\de^{1/2}}}|f_\theta|^2)^{1/2} \bigg\|_{L^p(\R^3)}\lesssim  \|f\|_{L^p(\R^3)}
\end{equation}
for any $1<p<\infty$, which in particular implies \eqref{genL8} when $\ga=\de^{1/2}$.

Assuming \eqref{genL8} is proved for $\ga\le\ga_0/2$,  we are going to look at the case $\ga=\ga_0$. We suggest the reader to pretend $\ga_0=1$ for the first time of reading. For simplicity, we will omit the subscript $\ga$ and write $\Th_\ga$ (or $\Th_\ga(\tau)$) as $\Th$ (or $\Th(\tau)$).
For each $\tau\in\Tau$, define the square function
\begin{equation}\label{defgtau}
    g_\tau:=\sum_{\theta\in\Theta(\tau)}|f_\theta|^2.
\end{equation}
Each term in the summation has Fourier transform supported in $\theta-\theta$ which is roughly the translation of $\theta$ to the origin. Note that all the $\theta\in\Th(\tau)$ are roughly a translation of each other, so there is a plank of dimensions $\sim\de\times\de^{1/2}\times\ga$ centered at the origin such that $\theta-\theta$ is contained in this plank for any $\theta\in\Th(\tau)$. We denote this plank by $\theta_\tau$. Now we obtain functions $\{g_\tau\}_{\tau\in\Tau}$ and planks $\{\theta_\tau\}_{\tau\in\Tau}$ with $\supp\wh g_\tau\subset \theta_\tau$. We also see that 
$$ \sum_{\theta\in\Th}|f_\theta|^2=\sum_\tau g_\tau. $$
Our goal is to estimate $\int_{\R^3} |\sum_\tau g_\tau|^4$.

Next, we will do a high-low frequency decomposition for each $g_\tau$. Fix a large enough constant $K$ which is to be determined later. Note that $\theta_\tau$ is a plank of dimensions $\de\times\de^{1/2}\times\ga$ centered at the origin. Define another plank which we call the ``low plank" as: $\theta_{\tau,low}:=\theta_{\tau}\cap B_{CK^{-1}\ga}(0)$. Roughly speaking, $\theta_{\tau,low}$ is the portion of $\theta_\tau$ that is centered at the origin and has dimensions $\de\times\de^{1/2}\times K^{-1}\ga$. Choose a smooth bump function $\Id^*_{\theta_\tau}$ adapted to $\theta_\tau$ and a smooth bump function $\Id^*_{\theta_{\tau,low}}$ adapted $\theta_{\tau,low}$. Define
\begin{equation}\label{defg}
    g_{\tau,low}:=(\Id^*_{\theta_{\tau,low}}\wh g_\tau)^\vee,\ \ \   g_{\tau,high}:=\big((\Id^*_{\theta_{\tau}}-\Id^*_{\theta_{\tau,low}})\wh g_\tau\big)^\vee.
\end{equation}
Since $\supp \wh g_\tau\subset \theta_\tau$, we have
$$ g_{\tau,low}+g_{\tau,high}=(\Id^*_{\theta_\tau}\wh g_\tau)^\vee=g_\tau. $$
By triangle inequality, we have
\begin{equation}\label{hl}
    \int_{\R^3} |\sum_{\tau}g_\tau|^4\lesssim \int_{\R^3} |\sum_{\tau}g_{\tau,low}|^4+\int_{\R^3} |\sum_{\tau}g_{\tau,high}|^4.
\end{equation}
We call the two terms on the right hand side above low term and high term. We consider them separately.\\

\textit{Estimate for the low term}: 
For each $\theta\in\Th$, we cover it by finitely overlapping planks of dimensions $\de\times\de^{1/2}\times K^{-1}\ga$, denoted by $\{\theta'\}$. We use ``$\theta'<\theta$" to indicate that $\theta'$ comes from the covering of $\theta$. One observation is that: if $\theta\in\Theta(\tau)$ and $\theta'<\theta$, then $\theta'$ is roughly a translation of $\theta_{\tau,low}$. For fixed $\theta$ and $\{\theta'\}_{\theta'<\theta}$, we choose smooth functions $\{\Id^*_{\theta'}\}_{\theta'<\theta}$ so that each $\Id^*_{\theta'}$ is a smooth bump function adapted to $\theta'$ and
\begin{equation}
    \Id^*_{\theta}=\sum_{\theta'<\theta}\Id^*_{\theta'}
\end{equation}
on $N_\de(\Ga)$.

As usual, we define $f_{\theta'}:=(\Id^*_{\theta'}\wh f\ )^\vee$. Since $\supp\wh f \subset N_\de(\Ga)$, we have
\begin{equation}
    f_\theta=\sum_{\theta'<\theta}f_{\theta'}.
\end{equation}
Let us look at the low term. By definition,
\begin{align}
    \wh g_{\tau,low}=\Id^*_{\theta_{\tau,low}} \sum_{\theta\in\Th(\tau)}\wh{|f_\theta|^2}=\Id^*_{\theta_{\tau,low}} \sum_{\theta\in\Th(\tau)}\big(|\sum_{\theta'<\theta}f_{\theta'}|^2\big)^{\wedge}\\
    =\Id^*_{\theta_{\tau,low}} \sum_{\theta\in\Th(\tau)}\sum_{\theta_1',\theta_2'<\theta}\wh f_{\theta_1'}*\wh {\overline{f_{\theta_2'}}}. 
\end{align}

Note that $\supp (\Id^*_{\theta_\tau,low}\cdot \wh f_{\theta_1'}*\wh {\overline{f_{\theta_2'}}})$ is contained in $\theta_\tau\cap (\theta_1'-\theta_2')$, so $\Id^*_{\theta_\tau,low}\cdot \wh f_{\theta_1'}*\wh {\overline{f_{\theta_2'}}}$ is nonzero only when $\theta_1'$ and $\theta_2'$ are roughly adjacent. We use $\theta_2'\sim \theta_1'$ to denote that $\theta_2'$ and $\theta_1'$ are adjacent. As a result, we have
\begin{equation}
    \wh g_{\tau,low}=\Id^*_{\theta_{\tau,low}}\sum_{\theta\in\Th(\tau)}\big( \sum_{\theta'_1<\theta}\sum_{\theta_2' \sim\theta_1'} f_{\theta_1'}\overline{f_{\theta_2'}} \big)^\wedge.
\end{equation}
So, we have
\begin{align}
    g_{\tau,low}&=(\Id^*_{\theta_{\tau,low}})^{\vee}*\sum_{\theta\in\Th(\tau)} \sum_{\theta'_1<\theta}\sum_{\theta_2' \sim\theta_1'} f_{\theta_1'}\overline{f_{\theta_2'}}\\
    &\lesssim |(\Id^*_{\theta_{\tau,low}})^{\vee}|*\sum_{\theta\in\Th(\tau)} \sum_{\theta'<\theta}| f_{\theta'}|^2\\
    \label{localconst}&\lesssim \frac{1}{|\theta_{\tau,low}^*|}\chi_{\theta_{\tau,low}^*}*\sum_{\theta\in\Th(\tau)} \sum_{\theta'<\theta}| f_{\theta'}|^2.
\end{align}
where the second-last inequality is by the fact that for each $\theta_1'$ there are $O(1)$ many $\theta_2'$ adjacent to $\theta_1'$.

Since $\supp \wh f_{\theta'}\subset \theta'$, we
have $|f_{\theta'}|^2$ is locally constant on any translation of $\theta'^*$. Also noting that $\theta_{\tau,low}^*$ and $\theta'^*$ are roughly the same, we have $|f_{\theta'}|$ is locally constant on any translation of $\theta_{\tau,low}^*$. So, we actually have
\begin{equation}\label{heuristicc}
    \eqref{localconst}\approx \sum_{\theta\in\Th(\tau)} \sum_{\theta'<\theta}| f_{\theta'}|^2=\sum_{\theta'\in\Th_{\ga K^{-1}}}|f_{\theta'}|^2.
\end{equation}
By induction hypothesis, we have the following estimate for the low term.
\begin{equation}\label{lowest}
    \int_{\R^3}|\sum_\tau g_{\tau,low}|^4 =\int_{\R^3}(\sum_{\theta\in\Th_{\ga K^{-1}}}|f_{\theta'}|^2)^4\le \big(C_\e (\ga K^{-1}\de^{-1})^\e\|f\|_{L^8}\big)^8.
\end{equation}



\medskip

\textit{Estimate for the high term}: We consider the truncated cone 
\begin{equation}\label{defcone}
    \Ga_\ga:=\{ \xi\in\R^3: \xi_1^2+\xi_2^2=\xi_3^2, \ga/K\le |\xi_3|\le \ga \}, 
\end{equation} 
and its $C\ga^{-1}\de$-neighborhood $N_{C\ga^{-1}\de}\Ga_\ga$. For simplicity, we will omit the constant $C$ and just write as $N_{\ga^{-1}\de}\Ga_\ga$. By definition, the support of $\wh g_{\tau,high}$ is contained in $\theta_\tau\setminus\theta_{\tau,low}$ which consists of two planks symmetric with respect to the origin and of dimensions  $\de\times\de^{1/2}\times \ga$. We denote them by $\theta_\tau\setminus\theta_{\tau,low}=\theta_{\tau,high}^+\cup\theta_{\tau,high}^-$, where $\theta_{\tau,high}^+$ lies in $\{\xi_3>0\}$ and $\theta_{\tau,high}^-$ lies in $\{\xi_3<0\}$. By a simple geometry, we see that $\supp\wh g_{\tau,high}\subset N_{\ga^{-1}\de}\Ga_\ga$. We choose a finitely overlapping covering of $N_{\ga^{-1}\de}\Ga_\ga$ by $\ga^{-1}\de\times \de^{1/2}\times \ga$-planks $\omega$, denoted by:
$$ N_{\ga^{-1}\de}\Ga_\ga=\bigcup\omega. $$
For each $\wh g_{\tau,high}$, there exists $\omega$ such that $\theta_{\tau,high}^+\cup\theta_{\tau,high}^-\subset \omega\cup \omega_{re}$ and hence
$\supp\wh g_{\tau,high}\subset \omega\cup \omega_{re}$. Here $\om_{re}=\{-\xi: \xi\in\om\}$ denotes the reflection of $\om$ with respect to the origin. We associate $\tau$ to $\om$ (if there are multiple choices, we choose one). The reader can also check each $\supp \wh g_{\tau,high}$ can intersect a bounded number of sets from $\{\om\cup \om_{re}\}_\om$. The relation between $\om$ and $\theta_{\tau,high}^+$ is given in Figure \ref{theta-omega}.

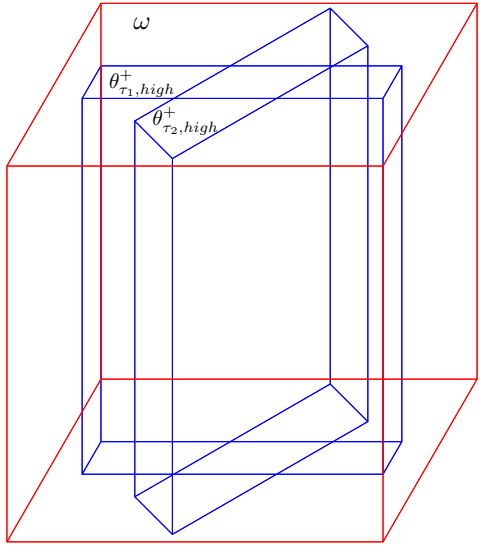
\begin{figure}
\begin{tikzpicture}
 \begin{scope}
   [x={(4cm,0cm)},
    y={({cos(60)*.5cm},{sin(60)*.5cm})},
    z={({cos(90)*5cm},{sin(90)*5cm})},line join=round,fill opacity=0.5,blue]
  \draw[] (0,0,0) -- (0,0,1) -- (0,1,1) -- (0,1,0) -- cycle;
  \draw[] (0,0,0) -- (1,0,0) -- (1,1,0) -- (0,1,0) -- cycle;
  \draw[] (0,1,0) -- (1,1,0) -- (1,1,1) -- (0,1,1) -- cycle;
  \draw[] (1,0,0) -- (1,0,1) -- (1,1,1) -- (1,1,0) -- cycle;
  \draw[] (0,0,1) -- (1,0,1) -- (1,1,1) -- (0,1,1) -- cycle;
  \draw[] (0,0,0) -- (1,0,0) -- (1,0,1) -- (0,0,1) -- cycle;
 \end{scope}
  \begin{scope}
   [shift={(1.2,-.8)},x={(-.5cm,.5cm)},
    y={({cos(30)*3cm},{sin(30)*3cm})},
    z={({cos(90)*5cm},{sin(90)*5cm})},line join=round,fill opacity=0.5,blue]
  \draw[] (0,0,0) -- (0,0,1) -- (0,1,1) -- (0,1,0) -- cycle;
  \draw[] (0,0,0) -- (1,0,0) -- (1,1,0) -- (0,1,0) -- cycle;
  \draw[] (0,1,0) -- (1,1,0) -- (1,1,1) -- (0,1,1) -- cycle;
  \draw[] (1,0,0) -- (1,0,1) -- (1,1,1) -- (1,1,0) -- cycle;
  \draw[] (0,0,1) -- (1,0,1) -- (1,1,1) -- (0,1,1) -- cycle;
  \draw[] (0,0,0) -- (1,0,0) -- (1,0,1) -- (0,0,1) -- cycle;
 \end{scope}
  \begin{scope}
   [shift={(-1,-.9)},x={(5cm,0cm)},
    y={({cos(60)*2.5cm},{sin(60)*2.5cm})},
    z={({cos(90)*5cm},{sin(90)*5cm})},line join=round,fill opacity=0.5,blue]
  \draw[red] (0,0,0) -- (0,0,1) -- (0,1,1) -- (0,1,0) -- cycle;
  \draw[red] (0,0,0) -- (1,0,0) -- (1,1,0) -- (0,1,0) -- cycle;
  \draw[red] (0,1,0) -- (1,1,0) -- (1,1,1) -- (0,1,1) -- cycle;
  \draw[red] (1,0,0) -- (1,0,1) -- (1,1,1) -- (1,1,0) -- cycle;
  \draw[red] (0,0,1) -- (1,0,1) -- (1,1,1) -- (0,1,1) -- cycle;
  \draw[red] (0,0,0) -- (1,0,0) -- (1,0,1) -- (0,0,1) -- cycle;
 \end{scope}

\node[scale=0.8] at (1.38,4.7) {$\theta_{\tau_2,high}^+$};
\node[scale=0.8] at (.8,5.2) {$\theta_{\tau_1,high}^+$};
\node at (.8,6) {$\om$}; 
 
\end{tikzpicture}
\caption{Relation between $\theta_{\tau,high}^+$ and $\om$}
\label{theta-omega}
\end{figure}

\begin{remark}

\rm

It's not harmful to the proof if we ignore $\theta_{\tau,high}^-$ (the other end of $\theta_\tau\setminus \theta_{\tau,low}$) and think of $\supp \wh g_{\tau,high}\subset\theta_{\tau,high}^+$. It's convenient to assume all the $\om$ lie in the upper half space $\{\xi_3>0\}$.
\end{remark}

Define $\Tau(\om)$ to be the set of $\tau$'s that are associated to $\om$.
For each $\om$, we define our function
\begin{equation}\label{defh}
    h_\om:=\sum_{\tau\in\Tau(\om)}g_{\tau,high}. 
\end{equation} 
We see that $\supp\wh h_{\om}\subset \om\cup\om_{re}$.
We have
\begin{equation}
    \int_{\R^3}|\sum_\tau g_{\tau,high}|^4=\int_{\R^3}|\sum_\om h_\om|^4.
\end{equation}
By rescaling of the factor $\ga^{-1}$ in all directions, we see that $N_{\ga^{-1}\de}\Ga_\ga$ becomes $N_{\ga^{-2}\de}\Ga_1$ and $\om$ becomes a $\ga^{-2}\de\times \ga^{-1}\de^{1/2}\times 1$-plank. We want to apply Guth-Wang-Zhang's reverse square function estimate \eqref{reverse}.
Before doing so, we give a remark.
\begin{remark}
\rm
In the setting of \eqref{reverse}, we use the cone $\Ga=\{\xi_1^2+\xi_2^2=\xi_3^3, 1/2\le|\xi_3|\le 1\}$, but we can also use the cone $\Ga_1$ defined in \eqref{defcone} at the cost of a factor $K^{O(1)}$ in \eqref{reverse}. That is 
$$ \|\sum_{\tau}f_\tau\|_{L^4(\R^3)}\le C_{\e'} K^{O(1)}\de^{-\e'} \bigg\| (\sum_\tau|f_\tau|^2)^{1/2} \bigg\|_{L^4(\R^3)}.
 $$
\end{remark}
We have from \eqref{reverse} that
\begin{equation}\label{comeback}
    \int_{\R^3}|\sum_\tau g_{\tau,high}|^4=\int_{\R^3}|\sum_\om h_\om|^4\le \big(C_{\e'} K^{O(1)}(\ga^{-2}\de )^{-\e'}\big)^4\int_{\R^3}(\sum_\om|h_\om|^2)^2.
\end{equation}
To estimate $\int_{\R^3}(\sum_\om|h_\om|^2)^2$, we will use a general version of Lemma 1.4 in \cite{guth2020sharp}. Before stating the lemma, we introduce some notations. 

Fix $R=\ga^2\de^{-1}$. We see that $\{\om\}$ are $R^{-1}\ga\times R^{-1/2}\ga\times \ga$-planks that form a finitely overlapping covering of $N_{R^{-1}\ga}(\Ga_\ga)$. Suppose we are given a set of functions $\vh=\{h_\om\}_\om$ with $\supp \wh h_\om\subset \om$.
For any dyadic $s$ in the range $R^{-1/2}\le s\le 1$, let $\Om_s=\{\om_s\}$ be $s^2\ga\times s\ga\times \ga$-planks that form a finitely overlapping covering of $N_{s^2\ga}(\Ga_\ga)$. For any $\om_s$, we define $U_{\om_s}$ to be the  plank centered at the origin  of dimensions $R\ga^{-1}\times Rs\ga^{-1}\times Rs^2\ga^{-1}$. Here, the edge of $U_{\om_s}$ with length $R\ga^{-1}$ (respectively $Rs\ga^{-1}, Rs^2\ga^{-1}$) has the same direction as the edge of $\om_s$ with length $s^2\ga$ (respectively $s\ga,\ga$). The motivation for the definition of $U_{\om_s}$ is that $U_{\om_s}$ is the smallest plank that contains the dual plank of $\om$ for all $\om\subset \om_s$. Later we will do rescaling $\xi\mapsto \ga^{-1}\xi$ in the frequency space, so that $\{\om\}$ becomes standard $R^{-1}\times R^{-1/2}\times 1$-planks that cover the $N_{R^{-1}}(\Ga_1)$.

We tile $\R^3$ by translated copies of $U_{\om_s}$. We write $U\parallel U_{\om_s}$ to denoted that $U$ is one of the copies, and define $S_U \vh$ by 
\begin{equation}
    S_U \vh=\big( \sum_{\om\subset \om_s}|h_\om|^2\big)^{1/2}|_U.
\end{equation}
\begin{remark}
\rm

Different from \cite{guth2020sharp}, we don't require there is a common function $h$ for all the $h_\om$ so that $h_\om$ is the Fourier restriction of $h$ to $\om$.
The only condition we need is $\supp \wh h_\om\subset \om$. One can compare the notation $S_U \vh$ here to a similar one in \cite{guth2020sharp}.
\end{remark}

We state the following lemma which is a general version of Lemma 1.4 in\cite{guth2020sharp}.
\begin{lemma}\label{kakeyalem}
Let the notation be given above. Then we have
\begin{equation}
    \int_{\R^3} (\sum_\om |h_\om|^2)^2\lesssim \sum_{R^{-1/2}\le s\le 1}\sum_{\om_s\in\Om_s}\sum_{U\parallel U_{\om_s}}|U|^{-1}\|S_U \vh\|_{L^2}^4.
\end{equation}
\end{lemma}
To prove Lemma \ref{kakeyalem}, we first do the rescaling $\xi\mapsto \ga^{-1}\xi$ in the frequency space and correspondingly $x\mapsto \ga x$ in physical space. Then, the proof is identical to the proof of Lemma 1.4 in \cite{guth2020sharp} which we do not reproduce here.

\medskip

Let us come back to \eqref{comeback}. By Lemma \ref{kakeyalem}, we have
\begin{align}
    \int_{\R^3}(\sum_\om|h_\om|^2)^2&\lesssim  \sum_{R^{-1/2}\le s\le 1}\sum_{\om_s\in\Om_s}\sum_{U\parallel U_{\om_s}}|U|^{-1}\|S_U \vh\|_{L^2}^4\\
    &=\sum_{R^{-1/2}\le s\le 1}\sum_{\om_s\in\Om_s}\sum_{U\parallel U_{\om_s}}|U|^{-1}\big(\int_U \sum_{\om\subset\om_s}|h_\om|^2 \big)^2.
\end{align}
Recalling the definition \eqref{defh} and \eqref{defg}, the above formula equals
\begin{equation}\label{aboveformula}
    \sum_{R^{-1/2}\le s\le 1}\sum_{\om_s\in\Om_s}\sum_{U\parallel U_{\om_s}}|U|^{-1}\big(\int_U \sum_{\om\subset\om_s}|\sum_{\tau\in\Tau(\om)} (\Id^*_{\theta_\tau}-\Id^*_{\theta_{\tau,low}})^\vee*g_\tau |^2 \big)^2.
\end{equation}

Before proceeding further, we give another definition. For an $\om$, we see that $\cup_{\tau\in\Tau(\om)} \tau$
is a union of $\sim\ga^{-1}$ many $\de\times\de^{1/2}\times 1$-planks, so $\cup_{\tau\in\Tau(\om)} \tau$ is contained in a $\de\ga^{-2}\times \de^{1/2}\ga^{-1}\times 1$-plank. By abuse of notation, we also use $\Tau(\om)$ to denote this plank. We define $f_{\Tau(\om)}$ to be the smooth Fourier restriction of $f$ to this plank, i.e., $f_{\Tau(\om)}:=(\Id^*_{\Tau(\om)}\wh f)^\vee$, where $\Id^*_{\Tau(\om)}$ is a smooth bump function adapted to $\Tau(\om)$. The property we will use is: $\Id^*_\theta=\Id^*_\theta\Id^*_{\Tau(\om)}$ if $\tau\in\Tau(\om)$ and $\theta\in\Theta(\tau)$. Heuristically, one may think $f_{\Tau(\om)}=\sum_{\tau\in\Tau(\om)}f_\tau$.

Fix an $U\parallel U_{\om_s}$. Let us look at the integral $\int_U |\sum_{\tau\in\Tau(\om)} (\Id^*_{\theta_\tau}-\Id^*_{\theta_{\tau,low}})^\vee*g_\tau |^2 $ in the above formula \eqref{aboveformula}. Recall \eqref{defgtau} that $g_\tau=\sum_{\theta\in\Theta(\tau)}|f_\theta|^2$. We will show there is a local orthogonality for $\{g_\tau\}_{\tau\in\Tau(\om)}$ on $U$, that is, 
we claim the following estimate:
\begin{equation}\label{L4onU}
   \int_U |\sum_{\tau\in\Tau(\om)} (\Id^*_{\theta_\tau}-\Id^*_{\theta_{\tau,low}})^\vee*g_\tau |^2\lesssim \int \chi_U |f_{\Tau(\om)}|^4,
\end{equation}
where $\chi_U$ is the indicator function of $U$ with rapidly decaying tail.

Our first step is to rewrite $(\Id^*_{\theta_\tau}-\Id^*_{\theta_{\tau,low}})^\vee*g$. 
If $\tau\in\Tau(\om)$, then by definition $\theta_{\tau}\setminus \theta_{\tau,low}\subset \om\cup \om_{re}$. If we denote by $\wt\om$ the translation of $\om$ to the center, from a simple geometry we have $ \theta_\tau\subset 100\wt\om $. We may omit the constant $100$ and write it as $\theta_\tau\subset \wt\om$. For reader's convenience, we remark that $\theta_\tau$ is of dimensions $\de\times\de^{1/2}\times \ga$, and $\wt\om$ is of dimensions $\ga^{-1}\de\times\de^{1/2}\times\ga(=R^{-1}\ga\times R^{-1/2}\ga\times \ga)$. Like the definition of $\theta_{\tau,low}$, we define $\wt\om_{low}$ to be the portion of $\wt\om$ that centered at the origin of dimensions $R^{-1}\ga\times R^{-1/2}\ga\times K^{-1}\ga$  (actually $\wt\om$ and $
\wt\om_{low}$ are comparable since $K$ is a large constant). Next, we will use the cutoff replacing property (see Section \ref{replace}).
Noting that $\wh g_\tau\subset \theta_\tau$, we can choose two bump functions $\Id^*_{\wt\om}$, $\Id^*_{\wt\om_{low}}$ adapted to $\wt\om,  \wt\om_{low}$ respectively, so that $\Id^*_{\theta_\tau}=\Id^*_{\wt\om}, \Id^*_{\theta_{\tau,low}}=\Id^*_{\wt\om_{low}}$ on $\supp \wt g_\tau$. As a result, we have
$(\Id^*_{\theta_\tau}-\Id^*_{\theta_{\tau,low}})^\vee*g_\tau=(\Id^*_{\wt\om}-\Id^*_{\wt\om_{low}})^\vee*g_\tau$. In other words, one can rewrite the convolution as 
$$ (\Id^*_{\theta_\tau}-\Id^*_{\theta_{\tau,low}})^\vee*g_\tau=\vp_{\wt\om}^\vee*g_\tau, $$
for some $\vp_{\wt\om}$ adapted to $\wt\om$, by taking the advantage that $\supp \wh g_\tau\subset \theta_\tau$. Thus,
$$ |\vp_{\wt\om}^\vee*g_\tau|\lesssim \frac{1}{|\om^*|}\chi_{\om^*}*g_\tau, $$
where $\chi_{\om^*}$ is the indicator function of $\om^*$ with rapidly decaying tail.

Let us prove \eqref{L4onU}.
\begin{align}
    \int_U |\sum_{\tau\in\Tau(\om)} (\Id^*_{\theta_\tau}-\Id^*_{\theta_{\tau,low}})^\vee*g_\tau |^2\lesssim \int \Id_U \big(\frac{1}{|\om^*|}\chi_{\om^*}*\sum_{\tau\in\Tau(\om)}\sum_{\theta\in\Theta(\tau)} |f_\theta|^2\big)^2\\
    \lesssim \int (\frac{1}{|\om^*|}\chi_{\om^*}*\Id_U)\cdot \big(\sum_{\tau\in\Tau(\om)}\sum_{\theta\in\Theta(\tau)} |f_\theta|^2\big)^2\lesssim \int \chi_U \big(\sum_{\tau\in\Tau(\om)}\sum_{\theta\in\Theta(\tau)} |f_\theta|^2\big)^2.
\end{align}
In the last inequality, we used $\frac{1}{|\om^*|}\chi_{\om^*}*\Id_U\lesssim \chi_U$ since $\om^*$ is contained in the translation of $U$ to the origin.

Next, we will apply Lemma \ref{weightedL2}. Note that $\theta_\tau$ is contained in a translated copy of $\om$, so each $\theta$ is contained in a translated copy of $\om$. To indicate its relationship with $\theta$, we denote this translated copy by $\om_\theta$ so that $\theta\subset \om_\theta$. One easily sees that $\{\om_\theta\}_\theta$ are finitely overlapping. Also we can find a smooth bump function $\Id^*_{\om_\theta}$ adapted to $\om_\theta$ such that $\Id^*_{\om_\theta}=\Id^*_\theta$ on $\supp \wh f$, by taking the advantage that $\supp\wh f\subset N_{\de}(\Ga)$ and the cutoff replacing property. By duality, we choose a function $g$ with $\|g\|_2=1$, such that 
\begin{equation}\label{right}
    \bigg(\int \chi_U \big(\sum_{\tau\in\Tau(\om)}\sum_{\theta\in\Theta(\tau)} |f_\theta|^2\big)^2\bigg)^{1/2}=\int g \chi_U^{1/2} \sum_{\tau\in\Tau(\om)}\sum_{\theta\in\Theta(\tau)} |f_\theta|^2.
\end{equation}
Note that $f_\theta=(\Id^*_\theta \wh f)^\vee=(\Id^*_\theta \Id^*_{\Tau(\om)}\wh f)^\vee=(\Id^*_\theta \wh f_{\Tau(\om)})^\vee=(\Id^*_{\om_\theta} \wh f_{\Tau(\om)})^\vee$. Since $\{\om_\theta\}$ is a set of congruent rectangles,
by applying Lemma \ref{weightedL2} to the function $f_{\Tau(\om)}$, \eqref{right} is bounded by
\begin{align}
    \int \frac{1}{|\om^*|}\chi_{\om^*}*(g\chi_U^{1/2}) |f_{\Tau(\om)}|^2=\int \frac{\frac{1}{|\om^*|}\chi_{\om^*}*(g\chi_U^{1/2})}{\chi_U^{1/2}} |f_{\Tau(\om)}|^2\chi_U^{1/2}\\
    \le \bigg(\int \big|\frac{\frac{1}{|\om^*|}\chi_{\om^*}*(g\chi_U^{1/2})}{\chi_U^{1/2}}\big|^2\bigg)^{1/2}\bigg( \int \chi_U |f_{\Tau(\om)}|^4\bigg)^{1/2}.
\end{align}
To finish the estimate, we just note that 
\begin{align*}
    \int \big(\frac{\frac{1}{|\om^*|}\chi_{\om^*}*(g\chi_U^{1/2})}{\chi_U^{1/2}}\big)^2\lesssim \int \bigg(\frac{1}{|\om^*|}\chi_{\om^*} *(|g|^2\chi_U)\bigg)\cdot\chi_U^{-1}
    =\int |g|^2\chi_U\cdot\bigg(\frac{1}{|\om^*|}\chi_{\om^*} *\chi_U^{-1}\bigg)\\
    \lesssim \int |g|^2=1. 
\end{align*} 
Here in the last inequality, we use the fact that $\chi_U^{-1}\sim \frac{1}{|\om^*|}\chi_{\om^*}*\chi_U^{-1} $, since $\om^*$ is contained in a translation of $U$.
We finish the proof of \eqref{L4onU}.

Plugging back to \eqref{comeback} and by Lemma \ref{kakeyalem}, we have
\begin{equation}
    \int_{\R^3}|\sum_\tau g_{\tau,high}|^4\le (C_{\e'} K^{O(1)}(\ga^{-2}\de)^{-\e'})^4 \sum_{R^{-1/2}\le s\le 1}\sum_{\om_s\in\Om_s}\sum_{U\parallel U_{\om_s}}|U|^{-1}\big(\int \chi_U \sum_{\om\subset\om_s}|f_{\Tau(\om)}|^4 \big)^2.
\end{equation}

By  Lemma \ref{localinterp}, we have
\begin{equation}
    \int \chi_U \sum_{\om\subset\om_s}|f_{\Tau(\om)}|^4\lesssim \int \chi_U |f_{\Tau(\om_s)}|^4.
\end{equation}
Here $f_{\Tau(\om_s)}$ has the similar definition as $f_{\Tau(\om)}$ does.
So, we have
\begin{align}
    \int_{\R^3}|\sum_\tau g_{\tau,high}|^4&\le (C_{\e'} K^{O(1)}(\ga^{-2}\de)^{-\e'})^4 \sum_{R^{-1/2}\le s\le 1}\sum_{\om_s\in\Om_s}\sum_{U\parallel U_{\om_s}}|U|^{-1}\big(\int \chi_U |f_{\om_s}|^4 \big)^2\\
    &\lesssim (C_{\e'} K^{O(1)}(\ga^{-2}\de)^{-\e'})^4 \sum_{R^{-1/2}\le s\le 1}\sum_{\om_s\in\Om_s}\sum_{U\parallel U_{\om_s}}\int \chi_U |f_{\om_s}|^8 \\
    &\sim (C_{\e'} K^{O(1)}(\ga^{-2}\de)^{-\e'})^4 \sum_{R^{-1/2}\le s\le 1}\sum_{\om_s\in\Om_s}\int_{\R^3}|f_{\om_s}|^8\\
    &\lesssim (C_{\e'} K^{O(1)}(\ga^{-2}\de)^{-\e'})^4\log\de^{-1} \int_{\R^3}|f|^8,
\end{align}
where the last inequality is by Lemma \ref{interpolation}.

Combining the estimate for the low term \eqref{lowest}, we just need to show
\begin{equation}
    C_\e(\ga K^{-1}\de^{-1})^\e+ C_{\e'} K^{O(1)}(\ga^2\de^{-1})^{\e'} \log\de^{-1}\le \frac{1}{C} C_\e (\ga\de^{-1})^\e
\end{equation}
in order to close the induction.

By choosing $K$ large enough and $\e'\ll\e$ (for example $\e'=\e^2$), we close the induction.
\end{proof}


\appendix
\section{Examples}
In the appendix, we give some examples.
Before doing that, we discuss a property of wave packet which will be used to construct examples. Our argument here is heuristic, but is not hard to be made rigorous.

Let $R$ be a rectangle in the frequency space $\R^n_\xi$. After rotation, we may write it as $R=c_R+\prod_{i=1}^n [-a_i,a_i]$, where $c_R$ is the center of $R$. Correspondingly, its dual rectangle is given by $R^*=\prod_{i=1}^n [-1/a_i,1/a_i]\subset \R^n_x$. 
One heuristic we will use in the rest of Appendix is:
\begin{equation}\label{heuristic0}
    (\frac{1}{|R|}\Id_R)^\vee(x)\approx e^{2\pi i x\cdot c_R}  \Id_{R^*}(x).
\end{equation}
By adding a phase to $\Id_R$, we also have
\begin{equation}\label{heuristic}
    (e^{-2\pi i x_0\cdot \xi}\frac{1}{|R|}\Id_R(\xi))^\vee(x)\approx e^{2\pi i (x-x_0)\cdot c_R}  \Id_{x_0+R^*}(x).
\end{equation}
In other words, there is a function whose support lie in $R$ and whose inverse Fourier transform, after taking absolute value, is roughly the indicator function of a translation of $R^*$.

\begin{remark}
\rm
$e^{2\pi i (x-x_0)\cdot c_R}  \Id_{x_0+R^*}(x)$ is referred to as \textit{a wave packet} at $x_0+R^*$.
\end{remark}

Let us discuss a trick called  \textit{wave packet dilation}. Given $\theta'=c+\prod_{i=1}^n [-a_i,a_i]$, we specify a direction, for example,  $\boldsymbol e_n$ (equivalently, the $\xi_n$-direction), and then let $\theta=c+\prod_{i=1}^{n-1} [-a_i,a_i]\times [0,a_n]$ be the upper half of $\theta'$. We see that the dual $\theta^{*}$ is the $2$-dilation of $\theta'^*$ along $\boldsymbol e_n$.  Write $\theta^{*}=Dil_{2,\boldsymbol e_n}\theta'^*$. When the direction of the dilation is clear (usually the direction is along the longest side or the second longest side), we just denote it by $\theta^{*}=Dil_{2}\theta'^*$. When we look for the examples of the map $T:f\mapsto (\Id_\theta \wh f)^\vee$, the auxiliary rectangle $\theta'$ will be very helpful. If we choose the test function $f=\frac{1}{|\theta|}(e^{-2\pi i x_0\cdot\xi}\Id_{\theta'})^\vee$, then
$$ |f|\approx \Id_T,\ \  |Tf|\approx \Id_{Dil_2 T}, $$
where $T=x_0+\theta'^*$ is a translation of $\theta^*$. It means that after the action of $T$ on the single wave packet $f$, the resulting new wave packet $Tf$ is two times longer.

By using this idea, we can quickly give the sharp example for Bochner-Riesz conjecture. First, let us recall the Bochner-Riesz conjecture. Let $N_{R^{-1}}(\ZS^{n-1})$ be the $\de$-neighborhood of the unit sphere in $\R^n$. Let $\Id^*_{N_{R^{-1}}(\ZS^{n-1})}$ be a smooth bump function adapted to $N_{R^{-1}}(\ZS^{n-1})$. Define the operator $Sf:=(\Id^*_{N_{R^{-1}}(\ZS^{n-1})} \wh f)^\vee$. We are interested in the following estimate
\begin{equation}\label{BR}
    \|Sf\|_{L^p(\R^n)}\lesssim R^{C_{n,p}} \|f\|_{L^p(\R^n)}.
\end{equation}
The conjecture is: \eqref{BR} holds for $p>\frac{2n}{n-1}$ and $C_{n,p}>\frac{n-1}{2}-\frac{n}{p}$.
We construct an example for \eqref{BR}. First, we write $\Id^*_{N_{R^{-1}}(\ZS^{n-1})}=\sum_\theta \Id^*_\theta$. Where $\{\theta\}$ is a set of $R^{-1/2}\times\cdots\times R^{-1/2}\times R^{-1}$-slabs that cover $N_{R^{-1}}(\ZS^{n-1})$.
For each $\theta$, we define $\theta'$ to be a $R^{-1/2}\times\cdots\times R^{-1/2}\times 100R^{-1}$-slab that contains $\theta$. Actually, $\theta'$ is the $100$-dilation of $\theta$ along the normal direction of $\theta$. Heuristically, we may assume $\{\theta'\}$ are disjoint. For each $\theta'$, we choose a function $f_{\theta'}$ such that $\supp \wh f_{\theta'}\subset \theta'$ and $|f_{\theta'}|\approx \Id_{T_{\theta'}}$, $|(\Id^*_\theta \wh f_{\theta'})^\vee|\approx \Id_{Dil_{100} T_{\theta'}}$, where $T_{\theta'}$ is a $R^{1/2}\times\cdots\times R^{1/2}\times 100^{-1}R$-tube dual to $\theta'$.
Now we just choose these tubes so that $\{T_{\theta'}\}$ are disjoint and $\{Dil_{100}T_{\theta'}\}$ intersect at the origin.

We choose our example $f=\sum_\theta a_\theta f_{\theta'}$ where $a_\theta\in \mathbb C$ are to be determined.
Since $\{T_{\theta'}\}$ are disjoint, we have $\|f\|_p\approx \|\sum_\theta \Id_{T_\theta'}\|_p\sim R^{\frac{n}{p}}$.
Since $\{\theta'\}$ are disjoint, we have $Sf=\sum_\theta (\Id^*_\theta \wh f)^\vee=\sum_\theta (a_\theta \Id^*_\theta \wh f_{\theta'})^\vee$. We can make it have a constructive interference at the unit ball $B_1(0)$ by properly choosing $a_\theta$. As a result, $\|Sf\|_p\gtrsim \#\theta\sim R^{\frac{n-1}{2}}$. Plugging into \eqref{BR} verifies $C_{n,p}\ge \frac{n-1}{2}-\frac{n}{p}$.

\begin{figure}

\begin{tikzpicture}
 
\draw[color=blue, thick] (-10,8.66) arc (120:60:10);
\draw[color=blue, thick] (-9.5,7.794) arc (120:60:9);
\draw[color=blue, thick] (-9,6.96) arc (120:60:8);
\draw[color=blue, thick] (-8.5,6.06) arc (120:60:7);

\draw[color=blue,rotate around={120:(-5,0)}] (2,0) -- (3,0);
\draw[color=blue,rotate around={110:(-5,0)}] (2,0) -- (3,0);
\draw[color=blue,rotate around={100:(-5,0)}] (2,0) -- (3,0);
\draw[color=blue,rotate around={90:(-5,0)}] (2,0) -- (3,0);
\draw[color=blue,rotate around={80:(-5,0)}] (2,0) -- (3,0);
\draw[color=blue,rotate around={70:(-5,0)}] (2,0) -- (3,0);
\draw[color=blue,rotate around={60:(-5,0)}] (2,0) -- (3,0);

\draw[color=blue,rotate around={120:(-5,0)}] (4,0) -- (5,0);
\draw[color=blue,rotate around={110:(-5,0)}] (4,0) -- (5,0);
\draw[color=blue,rotate around={100:(-5,0)}] (4,0) -- (5,0);
\draw[color=blue,rotate around={90:(-5,0)}] (4,0) -- (5,0);
\draw[color=blue,rotate around={80:(-5,0)}] (4,0) -- (5,0);
\draw[color=blue,rotate around={70:(-5,0)}] (4,0) -- (5,0);
\draw[color=blue,rotate around={60:(-5,0)}] (4,0) -- (5,0);

\draw[color=red,thick,rotate around={115:(-5,0)}] (4.9,-.4) rectangle (5.1,.6);
\draw[color=red,thick,rotate around={105:(-5,0)}] (4.9,-.4) rectangle (5.1,.6);
\draw[color=red,thick,rotate around={95:(-5,0)}] (4.9,-.4) rectangle (5.1,.6);
\draw[color=red,thick,rotate around={85:(-5,0)}] (4.9,-.4) rectangle (5.1,.6);
\draw[color=red,thick,rotate around={75:(-5,0)}] (4.9,-.4) rectangle (5.1,.6);
\draw[color=red,thick,rotate around={65:(-5,0)}] (4.9,-.4) rectangle (5.1,.6);


\draw[color=red,thick,rotate around={115:(-5,0)}] (2.9,-.44) rectangle (3.1,.56);
\draw[color=red,thick,rotate around={105:(-5,0)}] (2.9,-.44) rectangle (3.1,.56);
\draw[color=red,thick,rotate around={95:(-5,0)}] (2.9,-.44) rectangle (3.1,.56);
\draw[color=red,thick,rotate around={85:(-5,0)}] (2.9,-.44) rectangle (3.1,.56);
\draw[color=red,thick,rotate around={75:(-5,0)}] (2.9,-.44) rectangle (3.1,.56);
\draw[color=red,thick,rotate around={65:(-5,0)}] (2.9,-.44) rectangle (3.1,.56);

\draw[->, thick] (-5,6) -- (-5,5);

\draw[color=blue, ] (-7.5,3.5) -- (-7.5,2.5);
\draw[color=blue, ] (-6.5,3.5) -- (-6.5,2.5);
\draw[color=blue, ] (-7.5,2.5) -- (-6.5,2.5);
\draw[color=blue, thick] (-7.5,3.5) -- (-6.5,3.5);
\draw[color=red, thick] (-7.4,3.6) rectangle (-6.6,3.4);

\node at (-5,3) {where};
\node[scale=1.3] at (0,6) {$\subset \ZS^{n-1}$};

\draw[color=blue, ] (-3.5,4) rectangle (-2.5,3);

\node at (-1,3.5) {is a $\de$-cube};

\draw[color=red, thick] (-3.4,2.2) rectangle (-2.6,2.4);

\node at (-0,2.3) {is a $\de\times\cdots\times\de\times\de^2$-slab};

\end{tikzpicture}
\caption{Fourier support}
\label{sharpcutexample}
\end{figure}
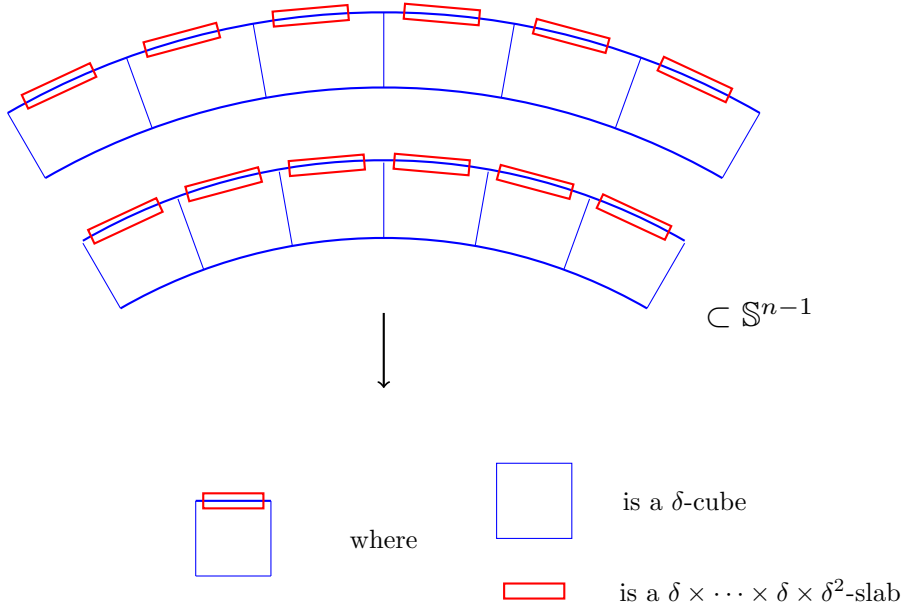

\subsection{} We show that if we remove the ``one-dimensional'' condition in Theorem \ref{main-thm-2}, then \eqref{main-esti-2} is no longer true. See in Figure \ref{sharpcutexample} where we plot our $\{\De_i\}$ (blue $\de$-cube) on the sphere $\ZS^{n-1}$ whose normal directions will be specified latter. There are also some red slabs of dimensions $\de\times\cdots\times\de\times \de^2$, each of which is \textit{attached} to only one $\De_j$, i.e., one half of the red slab lies in $\De_j$ and another half is outside $\De_j$. We denote the slab attached to $\De_j$ by $R_j$, and the set of them by $\{R_j\}$. 
We consider the corresponding regions in $\R^n$. Define
$$ \wt \De_j=\{ \xi\in\R^n:\xi/|\xi|\in\De_j, 1-\de\le|\xi|\le 1 \}, $$
$$ \wt R_j=\{ \xi\in\R^n:\xi/|\xi|\in R_j, 1-\de\le|\xi|\le 1 \}. $$
We see that $\{\wt \De_j\}$ is a set of $\de$-cubes in $\R^n$, and $\{\wt R_j\}$ is a set of $\de\times\cdots\times\de\times\de^2$-slabs. Denote the normal direction of $\wt R_j$ by $\vec n_j$. We may choose $\{\De_j\}$ and $\{R_j\}$ carefully so that $\{\vec n_j\}$ form a $\de$-dense subset of $\ZS^{n-1}$.

Now, for each $\De_j$, we choose a function $f_j$ so that $\wh f_j\subset \wt R_j$ and $$|f_j|\approx \Id_{T_j},\ \ |S_j f_j|\approx |(\Id_{\wt \De_j}\wh f_j)^\vee|\approx \Id_{Dil_{2}T_j},$$
where the dilation $Dil_{2}T_j$ is along $\vec n_j$, and $T_j$ is a $\de^{-1}\times\cdots\times\de^{-1}\times\de^{-2}$-tube dual to $\wt R_j$.
We choose the tubes $\{T_j\}$ so that they are disjoint and their dilations $\{Dil_{100}T_j\}$ intersect the origin. See in Figure \ref{sharpcutexample2} where the blue tubes are the $T_j$'s and they intersect $B_{\de^{-1}}$ at the origin. 

\begin{figure}
\begin{tikzpicture}
\begin{scope}
 [xshift=-18.5, yshift=10]
  \draw[fill=red] (0,0,0) -- (0,0,1.5) -- (0,1.5,1.5) -- (0,1.5,0) -- cycle;
  \draw[fill=red] (0,0,0) -- (1.5,0,0) -- (1.5,1.5,0) -- (0,1.5,0) -- cycle;
  \draw[fill=red] (0,1.5,0) -- (1.5,1.5,0) -- (1.5,1.5,1.5) -- (0,1.5,1.5) -- cycle;
  \draw[fill=red] (1.5,0,0) -- (1.5,0,1.5) -- (1.5,1.5,1.5) -- (1.5,1.5,0) -- cycle;
  \draw[fill=red] (0,0,1.5) -- (1.5,0,1.5) -- (1.5,1.5,1.5) -- (0,1.5,1.5) -- cycle;
  
 \end{scope}

 \begin{scope}
   [x={(4cm,0cm)},
    y={({cos(90)*1cm},{sin(90)*1cm})},
    z={({cos(70)*0.8cm},{sin(70)*0.8cm})},line join=round,fill opacity=0.5,blue]
  \draw[dotted] (0,0,0) -- (0,0,1) -- (0,1,1) -- (0,1,0) -- cycle;
  \draw[dotted] (0,0,0) -- (1,0,0) -- (1,1,0) -- (0,1,0) -- cycle;
  \draw[dotted] (0,1,0) -- (1,1,0) -- (1,1,1) -- (0,1,1) -- cycle;
  \draw[dotted] (1,0,0) -- (1,0,1) -- (1,1,1) -- (1,1,0) -- cycle;
  \draw[dotted] (0,0,1) -- (1,0,1) -- (1,1,1) -- (0,1,1) -- cycle;
  \draw[dotted] (0,0,0) -- (1,0,0) -- (1,0,1) -- (0,0,1) -- cycle;
  
  \draw[] (1,0,0) -- (1,0,1) -- (1,1,1) -- (1,1,0) -- cycle;
  \draw[] (1,0,0) -- (2,0,0) -- (2,1,0) -- (1,1,0) -- cycle;
  \draw[] (1,1,0) -- (2,1,0) -- (2,1,1) -- (1,1,1) -- cycle;
  \draw[] (2,0,0) -- (2,0,1) -- (2,1,1) -- (2,1,0) -- cycle;
  \draw[] (1,0,1) -- (2,0,1) -- (2,1,1) -- (1,1,1) -- cycle;
  \draw[] (1,0,0) -- (2,0,0) -- (2,0,1) -- (1,0,1) -- cycle;
 \end{scope}
 \begin{scope}
   [rotate=20,x={(4cm,0cm)},
    y={({cos(90)*.8cm},{sin(90)*.8cm})},
    z={({cos(70)*.8cm},{sin(70)*.8cm})},line join=round,fill opacity=0.5,blue]
  \draw[dotted] (0,0,0) -- (0,0,1) -- (0,1,1) -- (0,1,0) -- cycle;
  \draw[dotted] (0,0,0) -- (1,0,0) -- (1,1,0) -- (0,1,0) -- cycle;
  \draw[dotted] (0,1,0) -- (1,1,0) -- (1,1,1) -- (0,1,1) -- cycle;
  \draw[dotted] (1,0,0) -- (1,0,1) -- (1,1,1) -- (1,1,0) -- cycle;
  \draw[dotted] (0,0,1) -- (1,0,1) -- (1,1,1) -- (0,1,1) -- cycle;
  \draw[dotted] (0,0,0) -- (1,0,0) -- (1,0,1) -- (0,0,1) -- cycle;
  
  \draw[] (1,0,0) -- (1,0,1) -- (1,1,1) -- (1,1,0) -- cycle;
  \draw[] (1,0,0) -- (2,0,0) -- (2,1,0) -- (1,1,0) -- cycle;
  \draw[] (1,1,0) -- (2,1,0) -- (2,1,1) -- (1,1,1) -- cycle;
  \draw[] (2,0,0) -- (2,0,1) -- (2,1,1) -- (2,1,0) -- cycle;
  \draw[] (1,0,1) -- (2,0,1) -- (2,1,1) -- (1,1,1) -- cycle;
  \draw[] (1,0,0) -- (2,0,0) -- (2,0,1) -- (1,0,1) -- cycle;
 \end{scope}
 \begin{scope}
   [rotate=40,x={(4cm,0cm)},
    y={({cos(90)*.8cm},{sin(90)*.8cm})},
    z={({cos(70)*.8cm},{sin(70)*.8cm})},line join=round,fill opacity=0.5,blue]
  \draw[dotted] (0,0,0) -- (0,0,1) -- (0,1,1) -- (0,1,0) -- cycle;
  \draw[dotted] (0,0,0) -- (1,0,0) -- (1,1,0) -- (0,1,0) -- cycle;
  \draw[dotted] (0,1,0) -- (1,1,0) -- (1,1,1) -- (0,1,1) -- cycle;
  \draw[dotted] (1,0,0) -- (1,0,1) -- (1,1,1) -- (1,1,0) -- cycle;
  \draw[dotted] (0,0,1) -- (1,0,1) -- (1,1,1) -- (0,1,1) -- cycle;
  \draw[dotted] (0,0,0) -- (1,0,0) -- (1,0,1) -- (0,0,1) -- cycle;
  
  \draw[] (1,0,0) -- (1,0,1) -- (1,1,1) -- (1,1,0) -- cycle;
  \draw[] (1,0,0) -- (2,0,0) -- (2,1,0) -- (1,1,0) -- cycle;
  \draw[] (1,1,0) -- (2,1,0) -- (2,1,1) -- (1,1,1) -- cycle;
  \draw[] (2,0,0) -- (2,0,1) -- (2,1,1) -- (2,1,0) -- cycle;
  \draw[] (1,0,1) -- (2,0,1) -- (2,1,1) -- (1,1,1) -- cycle;
  \draw[] (1,0,0) -- (2,0,0) -- (2,0,1) -- (1,0,1) -- cycle;
 \end{scope}
 \begin{scope}
   [rotate=60,x={(4cm,0cm)},
    y={({cos(90)*.8cm},{sin(90)*.8cm})},
    z={({cos(70)*.8cm},{sin(70)*.8cm})},line join=round,fill opacity=0.5,blue]
  \draw[dotted] (0,0,0) -- (0,0,1) -- (0,1,1) -- (0,1,0) -- cycle;
  \draw[dotted] (0,0,0) -- (1,0,0) -- (1,1,0) -- (0,1,0) -- cycle;
  \draw[dotted] (0,1,0) -- (1,1,0) -- (1,1,1) -- (0,1,1) -- cycle;
  \draw[dotted] (1,0,0) -- (1,0,1) -- (1,1,1) -- (1,1,0) -- cycle;
  \draw[dotted] (0,0,1) -- (1,0,1) -- (1,1,1) -- (0,1,1) -- cycle;
  \draw[dotted] (0,0,0) -- (1,0,0) -- (1,0,1) -- (0,0,1) -- cycle;
  
  \draw[] (1,0,0) -- (1,0,1) -- (1,1,1) -- (1,1,0) -- cycle;
  \draw[] (1,0,0) -- (2,0,0) -- (2,1,0) -- (1,1,0) -- cycle;
  \draw[] (1,1,0) -- (2,1,0) -- (2,1,1) -- (1,1,1) -- cycle;
  \draw[] (2,0,0) -- (2,0,1) -- (2,1,1) -- (2,1,0) -- cycle;
  \draw[] (1,0,1) -- (2,0,1) -- (2,1,1) -- (1,1,1) -- cycle;
  \draw[] (1,0,0) -- (2,0,0) -- (2,0,1) -- (1,0,1) -- cycle;
 \end{scope}
 \begin{scope}
   [rotate=80,x={(4cm,0cm)},
    y={({cos(90)*.8cm},{sin(90)*.8cm})},
    z={({cos(70)*.8cm},{sin(70)*.8cm})},line join=round,fill opacity=0.5,blue]
  \draw[dotted] (0,0,0) -- (0,0,1) -- (0,1,1) -- (0,1,0) -- cycle;
  \draw[dotted] (0,0,0) -- (1,0,0) -- (1,1,0) -- (0,1,0) -- cycle;
  \draw[dotted] (0,1,0) -- (1,1,0) -- (1,1,1) -- (0,1,1) -- cycle;
  \draw[dotted] (1,0,0) -- (1,0,1) -- (1,1,1) -- (1,1,0) -- cycle;
  \draw[dotted] (0,0,1) -- (1,0,1) -- (1,1,1) -- (0,1,1) -- cycle;
  \draw[dotted] (0,0,0) -- (1,0,0) -- (1,0,1) -- (0,0,1) -- cycle;
  
  \draw[] (1,0,0) -- (1,0,1) -- (1,1,1) -- (1,1,0) -- cycle;
  \draw[] (1,0,0) -- (2,0,0) -- (2,1,0) -- (1,1,0) -- cycle;
  \draw[] (1,1,0) -- (2,1,0) -- (2,1,1) -- (1,1,1) -- cycle;
  \draw[] (2,0,0) -- (2,0,1) -- (2,1,1) -- (2,1,0) -- cycle;
  \draw[] (1,0,1) -- (2,0,1) -- (2,1,1) -- (1,1,1) -- cycle;
  \draw[] (1,0,0) -- (2,0,0) -- (2,0,1) -- (1,0,1) -- cycle;
 \end{scope}
 \node at (-1,-0.8) {$B_{\de^{-1}}$};
 \node at (9,1) {$T_j$'s};
\end{tikzpicture}
\caption{Concentration of tubes}
\label{sharpcutexample2}
\end{figure}
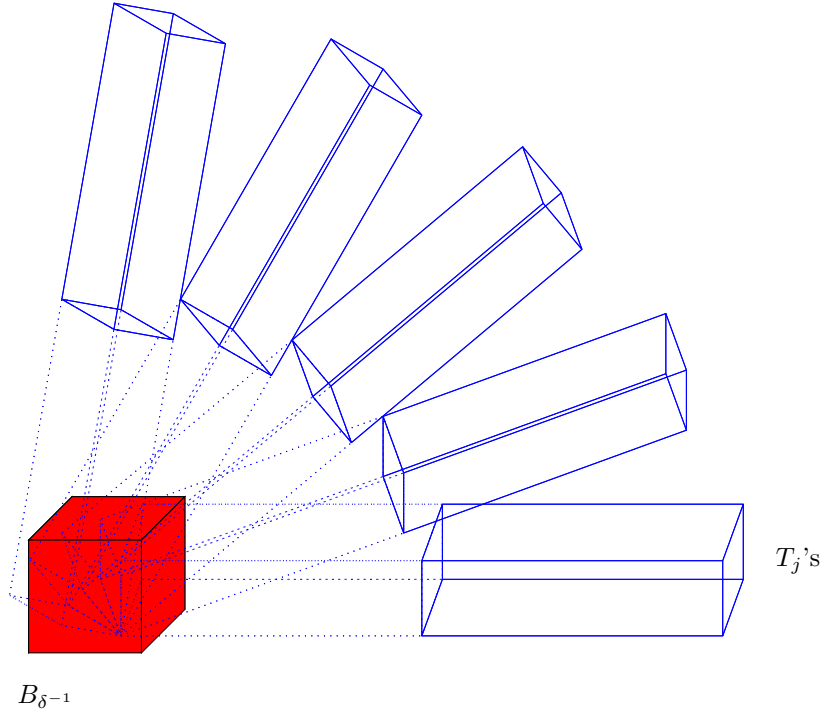

We choose the test function $f=\sum_j f_j$ and plug into
\begin{equation}\label{test}
    \|(\sum_j |S_j f|^2)^{1/2}\|_p\le C \|f\|_p.
\end{equation}
Since $\{T_j\}$ are disjoint, we have $\|f\|_p\approx \|\sum_j \Id_{T_j}\|_p\sim \de^{-\frac{2n}{p}}$. Since $\{\wt \De_j\}$ are disjoint, we have
$|S_j f|=|S_j f_j|\approx \Id_{Dil_2 T_j}$.
We have 
$$\|(\sum_j |S_j f|^2)^{1/2}\|_p\gtrsim \|(\sum_j \Id_{T_j})^{1/2}\|_{L^p(B_{\de^{-1}})}\gtrsim \#\{\De_j\}^{1/2}\de^{-\frac{n}{p}}\sim \de^{-\frac{n-1}{2}-\frac{n}{p}}.$$
We see the constant $C$ in \eqref{test} should be greater than $\de^{-\frac{n-1}{2}+\frac{n}{p}}$ for which the threshold is $p=\frac{2n}{n-1}$. 

\subsection{}
Finally, we give the sharp example for Theorem \ref{L8thm}. For a $\de\times\de^{1/2}\times 1$-slab $\tau$ contained in $N_\de(\Ga)$, we use $\boldsymbol c(\tau)$, $\boldsymbol n(\tau)$ and $\boldsymbol t(\tau)$ to denote the light direction, normal direction and tangent direction of $\tau$. More precisely, if $\xi\in\tau\cap\Ga\cap \{\xi_3=1\}$, then we roughly have $\boldsymbol c(\tau)=(\xi_1,\xi_2,1)$, $\boldsymbol n(\tau)=(\xi_1,\xi_2,-1)$ and $\boldsymbol t(\tau)=(-\xi_2,\xi_1,0)$. 
In the condition of the Theorem \ref{L8thm}, we assumed $\wh f\subset N_\de(\Ga)$, so we cannot dilate the wave packet in the $\boldsymbol n(\tau)$-direction, which is the longest direction of $\tau^*$. However, the ``wave packet dilation" trick still works because we can dilate in the second longest direction $\boldsymbol t(\tau)$.

Let $\{\tau\}$ be $\de\times\de^{1/2}\times 1$-slabs contained in $N_\de(\Ga)$ such that $\{100\tau\}$ are disjoint. As what we did in the previous example, we can choose $f=\sum_\tau f_{100\tau}$, such that $\wh f_{100\tau}\subset 100\tau$ and 
$$ |f_{100\tau}|\approx \Id_{P_\tau},\ \ |(\Id^*_\tau \wh f_{100\tau})^\vee|\approx\Id_{Dil_{100}P_\tau}, $$
where the dilation $Dil_{100}P_\tau$ is along $\boldsymbol t(\tau)$, and $P_\tau$ is a $1\times \de^{1/2}\times\de$-plank dual to $\tau$.
Now, we carefully choose $\{P_\tau\}$ so that $\{P_\tau\}$ are disjoint but $\{Dil_{100}P_\tau\}$ intersect the unit ball at the origin. See in Figure \ref{L8example}. We arrange $\{P_\tau\}$ into the $\de^{-1/2}$-neighborhood of the hyperboloid $\{x\in\R^3: x_1^2+x_2^2-x_3^2=\de^{-1}\}$. Each $P_\tau$ intersect $\{x_3=0\}$ at a $1\times \de^{1/2}$-rectangle lying in $\{(x_1,x_2):\de^{-1/2}\le \sqrt{x_1^2+x_2^2}\le 2\de^{-1/2}\}$ and pointing to the origin.

The total measure of all the planks is $\de^{-2}$, so $\|f\|_p\approx \de^{-\frac{2}{p}}$. On the other hand, 
$$ \|(\sum_\tau |(\Id^*_\tau \wh f_{100\tau})^\vee|^2)^{1/2}\|_p\gtrsim \|(\sum_\tau \Id_{Dil_{100}P_\tau})^{1/2}\|_{L^p(B_1(0))}\sim \de^{-1/4}. $$
Plugging into 
$$ \|(\sum_\tau |(\Id^*_\tau \wh f_{100\tau})^\vee|^2)^{1/2}\|_p\le C\|f\|_p, $$
we get $C\ge \de^{-\frac{1}{4}+\frac{2}{p}}$, yielding that $p=8$ is the critical exponent.

\begin{figure}
\begin{tikzpicture}

 \begin{scope}
   [rotate=10,shift={(-.4,-3.3)},x={(.1cm,0cm)},
    y={({cos(110)*1.5cm},{sin(110)*1.5cm})},
    z={({cos(60)*5cm},{sin(60)*5cm})},line join=round,fill opacity=0.5,black]
  \draw[fill=red!40] (0,0,0) -- (0,0,1) -- (0,1,1) -- (0,1,0) -- cycle;
  \draw[fill=red!40] (0,0,0) -- (1,0,0) -- (1,1,0) -- (0,1,0) -- cycle;
  \draw[fill=red!40] (0,1,0) -- (1,1,0) -- (1,1,1) -- (0,1,1) -- cycle;
  \draw[fill=red!40] (1,0,0) -- (1,0,1) -- (1,1,1) -- (1,1,0) -- cycle;
  \draw[fill=red!40] (0,0,1) -- (1,0,1) -- (1,1,1) -- (0,1,1) -- cycle;
  \draw[fill=red!40
  ] (0,0,0) -- (1,0,0) -- (1,0,1) -- (0,0,1) -- cycle;
 \end{scope}
 
 \begin{scope}
   [shift={(-1.3,-3.35)},x={(.1cm,0cm)},
    y={({cos(90)*1.2cm},{sin(90)*1.2cm})},
    z={({cos(60)*5.3cm},{sin(60)*5.3cm})},line join=round,fill opacity=0.5,black]
  \draw[fill=blue!40] (0,0,0) -- (0,0,1) -- (0,1,1) -- (0,1,0) -- cycle;
  \draw[fill=blue!40] (0,0,0) -- (1,0,0) -- (1,1,0) -- (0,1,0) -- cycle;
  \draw[fill=blue!40] (0,1,0) -- (1,1,0) -- (1,1,1) -- (0,1,1) -- cycle;
  \draw[fill=blue!40] (1,0,0) -- (1,0,1) -- (1,1,1) -- (1,1,0) -- cycle;
  \draw[fill=blue!40] (0,0,1) -- (1,0,1) -- (1,1,1) -- (0,1,1) -- cycle;
  \draw[fill=blue!40] (0,0,0) -- (1,0,0) -- (1,0,1) -- (0,0,1) -- cycle;
 \end{scope}
 
 \fill (-.11,.6) circle (.5mm);
\draw[] (-.11,0.6) ellipse (1cm and .3cm);
\draw[] (-.11,0.6) ellipse (3cm and 1cm);
\draw[dotted] (.13,.35) -- (.39,-.47);
\draw[dotted] (.36,.35) -- (1.3,-.3);
\node at (3,-2) {$1\times \frac{1}{100}\de^{-1/2}\times\de^{-1}$-planks};
\end{tikzpicture}

\caption{Planks arranged along a hyperboloid}
\label{L8example}

\end{figure}
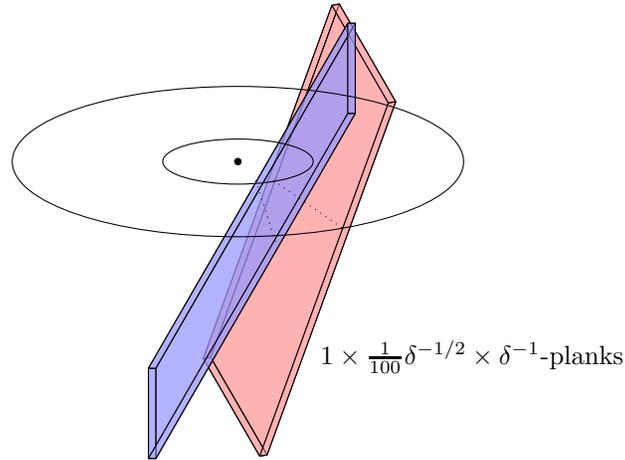

\newpage
\bibliographystyle{alpha}
\bibliography{bibli}

\end{document}